\pdfoutput=1
\documentclass[paper=a4, english, final ]{scrartcl}

\usepackage{babel}

\usepackage[T1]{fontenc}
\usepackage[utf8]{inputenc}

\usepackage{lmodern}

\usepackage[final, babel ]{microtype}

\usepackage{amsfonts} \usepackage{amssymb}  \usepackage{mathtools} 

\providecommand\given{} \newcommand\SetSymbol[1][]{
   \mathrel{}\mathclose{}#1|\allowbreak\mathopen{}\mathrel{}}
\DeclarePairedDelimiterX\Set[1]{\lbrace}{\rbrace}{ \renewcommand\given{\SetSymbol[\delimsize]} #1 }

\DeclarePairedDelimiter\abs{\lvert}{\rvert}\DeclarePairedDelimiter\norm{\lVert}{\rVert}

\makeatletter
\let\oldabs\abs
\def\abs{\@ifstar{\oldabs}{\oldabs*}}
\let\oldnorm\norm
\def\norm{\@ifstar{\oldnorm}{\oldnorm*}}

\let\oldSet\Set
\def\Set{\@ifstar{\oldSet}{\oldSet*}}
\makeatother

 \mathtoolsset{showonlyrefs}

\usepackage{chngcntr}
\counterwithin{equation}{subsection}

\usepackage{amsthm} \usepackage{thmtools, thm-restate}

\declaretheorem[name=Proposition, numberlike=equation]{prop}

\declaretheorem[name=Definition, numberlike=equation, style=definition]{defn}
\declaretheorem[name=Lemma, numberlike=equation, style=definition]{lemma}
\declaretheorem[name=Corollary, numberlike=equation, style=definition]{cor}

\declaretheorem[name=Remark, numberlike=equation, style=remark]{rem}
\declaretheorem[name=Example, numberlike=equation, style=remark]{ex}

\usepackage{tikz-cd} \tikzset{
	pf/.style={commutative diagrams/.cd, every arrow, every label},
	surj/.style=commutative diagrams/two heads,
	inj/.style=commutative diagrams/hook,
	gl/.style=commutative diagrams/equal,
	mat/.style={matrix of math nodes, commutative diagrams/.cd, every cell},
	dr/.style={matrix of math nodes, commutative diagrams/.cd, every cell, column sep=small},
	seq/.style={matrix of math nodes, commutative diagrams/.cd, every cell, column sep=small}
	}
\newenvironment{diag*}{\[\begin{tikzpicture}[commutative diagrams/.cd, every diagram, baseline=(current bounding box.center)]}{\end{tikzpicture}\]\ignorespacesafterend}
\newenvironment{diag}{\begin{equation}\begin{tikzpicture}[commutative diagrams/.cd, every diagram, baseline=(current bounding box.center)]}{\end{tikzpicture}\end{equation}\ignorespacesafterend}

\usepackage{csquotes} \usepackage[style=authoryear, date=year,
	isbn=false,
]{biblatex}
\addbibresource{bib/bib.bib}
\addbibresource{bib/meinbib.bib}
\addbibresource{extra.bib}

\DeclareSourcemap{
	\maps[datatype=bibtex]{
\map[overwrite]{
			\step[fieldsource=author,
				match = {Yuri I. Manin|Yu. Manin},
				replace = {Yuri Manin}
			]
			\step[fieldsource=author,
				match = {Ieke Moerdijk|I. Moerdijk},
				replace = {Ieke Moerdijk}
			]
		}
	}
}

\author{\texorpdfstring{Enno Keßler \and Artan Sheshmani \and Shing-Tung Yau}{
Enno Keßler, Artan Sheshmani, Shing-Tung Yau
}
}
\title{Super quantum cohomology I:\\ Super stable maps of genus zero\\ with Neveu--Schwarz punctures}

\usepackage{stensor}
\DeclareRobustCommand{\tensor}{\stensor}

\usepackage{slashed}
\newcommand{\Dirac}{\slashed{D}}

\usepackage{faktor}
\usepackage{bm}

\usepackage{enumitem}
\setlist[enumerate]{label={\roman*)}}
\setlist[description]{labelindent=\parindent}

\usepackage[final, pdfusetitle ]{hyperref}

\DeclareMathOperator{\ACI}{I}

\DeclareMathOperator{\codim}{codim}

\DeclareMathOperator{\GL}{GL}

\DeclareMathOperator{\Hom}{Hom}
\DeclareMathOperator{\im}{im}
\DeclareMathOperator{\id}{id}

\DeclareMathOperator{\Mat}{Mat}

\DeclareMathOperator{\SpGL}{Sp}

\DeclareMathOperator{\Tr}{Tr}

\newcommand{\ic}{\mathrm{i}}

\newcommand{\dual}[1]{{#1}^{\vee}}

\newcommand{\cat}[1]{\mathsf{#1}}

\newcommand{\LieAlg}[1]{\mathfrak{#1}}

\newcommand{\Top}[1]{{\|#1\|}}
\newcommand{\Red}[1]{{#1}_{red}}
\newcommand{\Smooth}[1]{{|#1|}}

\renewcommand{\d}{\mathop{}\!d}

\newcommand{\p}[1]{{p(#1)}}

\newcommand{\VSec}[2][]{\Gamma_{#1}\left(#2\right)}

\newcommand{\tangent}[2][]{T_{#1}#2} \newcommand{\differential}[1]{\d{#1}} \newcommand{\cotangent}[1]{\dual{T}#1}

\newcommand{\Integers}{\mathbb{Z}}
\newcommand{\Z}{\Integers}

\newcommand{\RealNumbers}{\mathbb{R}}
\newcommand{\R}{\RealNumbers}
\newcommand{\ComplexNumbers}{\mathbb{C}}
\newcommand{\C}{\ComplexNumbers}

\newcommand{\ProjectiveSpace}[2][]{\mathbb{P}_{#1}^{#2}}

\newcommand{\cD}{\mathcal{D}}
\newcommand{\cO}{\mathcal{O}}

\newcommand{\targetACI}{J}
\newcommand{\DJBar}{\overline{D}_\targetACI}
\newcommand{\DelJBar}{\overline{\partial}_\targetACI}
\newcommand{\groupoid}[1]{\mathcal{#1}}
\DeclareMathOperator{\DLaplace}{\Delta^\cD}
\DeclareMathOperator{\ev}{ev}

\begin{document}
\maketitle
\begin{abstract}
	In this article we define stable supercurves and super stable maps of genus zero via labeled trees.
	We prove that the moduli space of stable supercurves and super stable maps of fixed tree type are quotient superorbifolds.
	To this end, we prove a slice theorem for the action of super Lie groups on Riemannian supermanifolds and discuss superorbifolds.
	Furthermore, we propose a Gromov topology on super stable maps such that the restriction to fixed tree type yields the quotient topology from the superorbifolds and the reduction is compact.
	This would, possibly, lead to the notions of super Gromov--Witten invariants and small super quantum cohomology to be discussed in sequels.
\end{abstract}

\tableofcontents

\section{Introduction}
In this article we work towards a compactification of the moduli space of super \(\targetACI\)-holomorphic curves of genus zero.
Super \(\targetACI\)-holomorphic curves are a supergeometric generalization of \(\targetACI\)-holomorphic curves that we introduced in~\cite{KSY-SJC}.

\(\targetACI\)-holomorphic curves or pseudoholomorphic curves have been of great interest to mathematics since the discovery that \(\targetACI\)-holomorphic curves allow to construct invariants of symplectic manifolds in~\cite{G-PHCSM} and that those invariants are related to topological superstring theory, see~\cite{W-TDGISTMC}.
Systematic development of the theory of \(\targetACI\)-holomorphic curves involved many authors, see, for example~\cites{CDGP-PCYMESST}{RT-MTQC}{BM-SSMGMI}, or the textbooks~\cites{HKKTPVVZ-MS}{McDS-JHCST}.
A crucial step towards Gromov--Witten invariants is the compactification of the moduli space of \(\targetACI\)-holomorphic curves via stable maps which was first proposed in~\cites{KM-GWCQCEG}.
This compactification takes into account that a family of \(\targetACI\)-holomorphic curves of genus zero may degenerate into a tree of bubbles in the limit.

Super Riemann surfaces on the other hand have first appeared as generalizations of Riemann surfaces with anti-commutative variables in superstring theory, see~\cites{F-NSTTDCFT}{dHP-GSP}.
The mathematical formalization of the anti-commutative variables required for supersymmetry is known as supergeometry.
An early overview is given in~\cite{L-ITS}.
The moduli space of super Riemann surfaces has been studied from the mathematical perspective in~\cites{CR-SRSUTT}{LBR-MSRS}{DW-SMNP}{FKP-MSSCCLB} and from the perspective of string theory in~\cites{W-NSRSTM}.

Here, we construct super stable maps of genus zero and their moduli spaces using the language of super differential geometry and superorbifolds.
We are guided by a functoriality principle:
The reduction of any result on super stable maps, that is setting all odd variables to zero, should reproduce the corresponding results for classical \(\targetACI\)-holomorphic curves.
Hence, we define super stable maps as marked trees of super \(\targetACI\)-holomorphic curves of genus zero.
That is, every vertex of the tree represents one map from the projective superspace \(\ProjectiveSpace[\C]{1|1}\) to an almost Kähler manifold that satisfies the differential equation of super \(\targetACI\)-holomorphic curves.
We show that the stability conditions proposed in~\cite{KM-GWCQCEG} also imply stability in the supergeometric setting, and consequently, the space of super stable maps of fixed tree type is a superorbifold.
Furthermore, we propose a generalization of Gromov topology for super stable maps.

To be more precise, we recall some results from~\cite{KSY-SJC}.
A map \(\Phi\colon \ProjectiveSpace[\C]{1|1}\to N\) from \(\ProjectiveSpace[\C]{1|1}\), the only super Riemann surface of genus zero, to a fixed almost Kähler manifold \((N, \omega, \targetACI)\) is called a super \(\targetACI\)-holomorphic curve if
\begin{equation}
	\DJBar\Phi
	= \frac12\left.\left(\differential{\Phi} + \targetACI\circ \differential{\Phi}\circ \ACI\right)\right|_\cD
	\in\VSec{\dual{\cD}\otimes\Phi^*\tangent{N}}
\end{equation}
vanishes.
Here, \(\ACI\) is the almost complex structure on \(\ProjectiveSpace[\C]{1|1}\) and \(\cD\subset\tangent{\ProjectiveSpace[\C]{1|1}}\) is a totally non-integrable distribution of complex rank \(0|1\) that defines the super Riemann surface structure.
We have shown that under certain transversality conditions on \(\Phi\) there is a deformation space of super \(\targetACI\)-holomorphic curves around \(\Phi\) of real dimension
\begin{equation}
	2n + 2\left<c_1(\tangent{N}), A\right>|2\left<c_1(\tangent{N}), A\right>,
\end{equation}
where \(2n\) is the real dimension of \(N\) and \(A\in H_2(N, \Integers)\) is the homology class of the image of \(\Phi\).
If the almost Kähler manifold \(N\) is chosen such that the transversality conditions are satisfied for all maps the moduli space \(\mathcal{M}_0(A)\) of super \(\targetACI\)-holomorphic curves is a supermanifold.

To better understand the supermanifold structure of \(\mathcal{M}_0(A)\) we consider its point functor
\begin{equation}
	\underline{\mathcal{M}_0(A)}\colon \cat{SPoint}^{op}\to \cat{Man}
\end{equation}
in the sense of Molotkov--Sachse:
Every superpoint \(C\in\cat{SPoint}\), that is a supermanifold of dimension \(0|s\), gives rise to the manifold \(\underline{\mathcal{M}_0(A)}(C)\) of maps \(C\to \mathcal{M}_0(A)\) and any map \(C'\to C\) between superpoints gives rise to a map \(\underline{\mathcal{M}_0(A)}(C)\to \underline{\mathcal{M}_0(A)}(C')\).
It was shown in~\cites{M-IDCSM}{S-GAASTS} that the point functor allows the reconstruction of the supermanifold structure.
A \(C\)-point \(C\to \mathcal{M}_0(A)\) is given by a super \(\targetACI\)-holomorphic curve \(\Phi\colon \ProjectiveSpace[\C]{1|1}\times C\to N\) parametrized by \(C\).
In particular, the \(\R^{0|0}\)-points \(\underline{\mathcal{M}_0(A)}(\R^{0|0})\) form the manifold of classical \(\targetACI\)-holomorphic curves of genus zero.

In this article, we construct a moduli space of simple super stable maps of genus zero with \(k\) marked points as a functor
\begin{equation}
	\underline{\overline{\mathcal{M}}^*_{0,k}(A)}\colon \cat{SPoint}^{op}\to \cat{Top}
\end{equation}
such that \(\underline{\overline{\mathcal{M}}^*_{0,k}(A)}(\R^{0|0})\) is the compact moduli space of simple stable maps of genus zero with \(k\) marked points as presented in~\cite{McDS-JHCST}.
A super stable map of genus zero parametrized over the superpoint \(C\) is then given by a tree, where each vertex corresponds to a super \(\targetACI\)-holomorphic curve of genus zero with marked points and each edge prescribes a common \(C\)-point of the super \(\targetACI\)-holomorphic curves at the corresponding vertices.
The stability condition that vertices mapping to a point have at least three marked points yield that the number of automorphisms of the super stable map is finite.
More precisely, the only possible automorphisms are reflections of the odd directions on each vertex for super stable maps over \(\R^{0|0}\).
In order for the functor to take values in the category of topological spaces we define a generalization of the Gromov topology for super stable maps.

The functor~\(\underline{\overline{\mathcal{M}}^*_{0,k}(A)}\) does not represent a supermanifold or superorbifold because already the moduli space \(\underline{\overline{\mathcal{M}}^*_{0,k}(A)}(\R^{0|0})\) is not a manifold in general.
But the restriction \(\underline{\mathcal{M}_{0,T}(\Set{A_\alpha})}\) to super stable maps of fixed tree type \(T\) and partition \(\Set{A_\alpha}\) of the homology class \(A\) on the vertices of the tree \(T\) is the orbit functor of a superorbifold.
The dimension of the orbifold decreases with the number of vertices.
We believe that this yields sufficient structure on \(\underline{\overline{\mathcal{M}}^*_{0,k}(A)}\) to define virtual fundamental cycle and supergeometric Gromov--Witten invariants as a Berezin integral to be defined in an upcoming work.

The moduli spaces \(\underline{\mathcal{M}_{0,T}(\Set{A_\alpha})}\) are constructed as quotient of a configuration space by the action of the automorphism group.
While it is known that the quotient of a supermanifold by a proper and free action of a super Lie group is a supermanifold, see~\cite{AH-IBIHS}, this action of the automorphism group is not free.
Using geodesics on Riemannian supermanifolds we prove a supergeometric analogue of the slice theorem, compare~\cite{P-ESANCLG}:
\begin{restatable*}{theorem}{ExistenceOfSlice}\label{thm:ExistenceOfSlice}
	Let \(M\) be a supermanifold over \(B\) with Riemannian metric \(m\), \(a\colon G\times_B M\to M\) a proper group action and \(p\colon B\to M\) a \(B\)-point of \(M\) with isotropy group \(H_p\).
	Suppose that
	\begin{itemize}
		\item
			\(H_p\) is a super Lie subgroup of \(G\), and
		\item
			\(H_p\) acts on \(M\) by isometries.
	\end{itemize}
	Then there exists a slice at \(p\).
\end{restatable*}
A slice \(S\) is a local complement to the orbits of the group action around the point \(p\) and it follows that the manifold \(M\) is locally, around \(p\), isomorphic to \(G\times_{H_p} S\).

For the investigation of the superorbifold structure of the moduli spaces we develop a systematic approach to superorbifolds via Morita equivalence of super Lie groupoids.
If a proper action of a super Lie group on a supermanifold has finite isotropy groups the quotient is a superorbifold.
Local charts for the quotient superorbifold are given by slices of the action divided by the isotropy group.
This yields the second main result:
\begin{restatable*}{theorem}{QuotientOrbifoldTheorem}\label{thm:QuotientByFiniteIsotropyProperGroupAction}
	Let \(G\) be a super Lie group that acts properly and with finite isotropy groups on a supermanifold \(M\) of dimension \(m|2n\).
	The Morita equivalence class of the transformation groupoid \(G\ltimes M\) is a superorbifold of dimension \(\dim M - \dim G\).
	We denote this superorbifold by \(\faktor{M}{G}\).
\end{restatable*}

We apply Theorem~\ref{thm:QuotientByFiniteIsotropyProperGroupAction} twice in this article.
First, we use Theorem~\ref{thm:QuotientByFiniteIsotropyProperGroupAction} to construct the moduli space of stable supercurves of genus zero with marked points.
A precise discussion of the superconformal automorphisms of \(\ProjectiveSpace[\C]{1|1}\) shows that any such automorphism of \(\ProjectiveSpace[\C]{1|1}\) is determined by the image of three points up to a sign.
This yields a description of the moduli space of stable supercurves of genus zero, fixed tree type, and with markings as a superorbifold obtained by the quotient of a power of \(\ProjectiveSpace[\C]{1|1}\) by the tree automorphisms in Proposition~\ref{prop:ModuliSpaceOfSRS0k}.
Similarly, a second application of Theorem~\ref{thm:QuotientByFiniteIsotropyProperGroupAction} shows that the functor \(\underline{\mathcal{M}^*_{0,T}(\Set{A_\alpha})}\) of equivalence classes of simple super stable maps of genus zero, with fixed tree type \(T\) and partition \(\Set{A_\alpha}\) of the homology class \(A\) is the orbit functor of a quotient superorbifold:
\begin{restatable*}{theorem}{ModuliSpaceOfSimpleStableMapsFixedTreeType}\label{thm:ModuliSpaceOfSimpleStableMapsFixedTreeType}
Let \(N\) be an almost Kähler manifold, \(T\) a \(k\)-labeled tree with \(\#E\) edges and \(\Set{A_\alpha}\) a partition of the homology class \(A\) on the tree \(T\).
	Assume that
	\begin{itemize}
		\item
			all the moduli spaces \(\mathcal{M}_0^*(A_\alpha)\) of simple super \(\targetACI\)-holomorphic curves of genus zero are supermanifolds,
		\item
			the edge evaluation map \(\ev^T\colon Z^T\times M^T\to N^{2\#E}\) is transversal to \(\Delta^T\) for all simple stable curves of genus zero modeled over the tree \(T\).
	\end{itemize}
	Then \(\underline{\mathcal{M}_{0,T}^*(\Set{A_\alpha})}\) is the orbit functor of the global quotient superorbifold
	\begin{equation}
		\mathcal{M}_{0,T}^*(\Set{A_\alpha}) = \faktor{Z^T\times M^T}{G^T}.
	\end{equation}
	The orbifold \(\mathcal{M}_{0,T}^*(\Set{A_\alpha})\) has dimension
	\begin{equation}
		2n + 2\left<A, c_1(TN)\right> - 2\#E + 2k - 6 | 2\left<A, c_1(TN)\right>  + 2k - 4.
	\end{equation}
	and isotropy group \(\Z_2^{\#E+1}\) on the \(\R^{0|0}\)-points, generated by the maps \(\Xi_-^\alpha\) which act by reflection of the odd direction on the node \(\alpha\in T\) and identity on the others.
\end{restatable*}
Here \(Z^T\) is an open sub-supermanifold of a power of \(\ProjectiveSpace[\C]{1|1}\) and \(M^T\) an open sub-supermanifold of the product of the moduli spaces \(\mathcal{M}^*_0(A_\alpha)\).
The conditions of Theorem~\ref{thm:ModuliSpaceOfSimpleStableMapsFixedTreeType} can in certain cases be satisfied by generic perturbation of the almost complex structure on the target almost Kähler manifold \((N, \omega, \targetACI)\).

We point out that we only consider stable supercurves and super stable maps of genus zero with Neveu--Schwarz punctures and nodes.
It was shown in~\cites{D-LaM} that in the local deformation theory of super Riemann surfaces two types of nodes, Neveu--Schwarz type nodes and Ramond type nodes, may appear.
But it is consistent to restrict to Neveu--Schwarz type nodes and punctures only because the degree of Ramond divisors has to be even on every irreducible component, see Remark~\ref{rem:NSPuncturesAndRPunctures}.
Neveu--Schwarz punctures can be added to a given super Riemann surface and are compatible with the principle of functoriality explained above.
The case of Ramond punctures which requires modification of the definition of super Riemann surfaces and  of the methods developed in~\cite{KSY-SJC} is left for later.

The paper is organized as follows:
In Chapter~2, we discuss group actions and quotients in supergeometry.
Besides recalling the notation, the main goal of Chapter~2 is to show that the quotient of a supermanifold of dimension \(m|2n\) by a proper action of a super Lie group with finite isotropy groups is a superorbifold.
To this end, we define geodesics and the exponential map on Riemannian supermanifolds and prove a slice theorem for proper group actions on Riemannian supermanifolds of dimension \(m|2n\).
We give a rigorous discussion of superorbifolds via super Lie groupoids up to Morita equivalence.

In Chapter~3, we discuss stable supercurves of genus zero with marked points.
In a first step we show that the superconformal automorphisms of \(\ProjectiveSpace[\C]{1|1}\) that fix three points consist only of the identity and the reflection of the odd directions.
We then define stable curves of genus zero as a stable tree of copies of \(\ProjectiveSpace[\C]{1|1}\) and show that for a fixed tree type their moduli space is a superorbifold.

Super stable maps are discussed in Chapter~4.
We define super stable maps of genus zero as maps from marked nodal supercurves that allow only a finite number of automorphisms and give the definition of the point functor of the moduli space of super stable curves with a fixed number of marked points.
We show that this point functor can be refined to take values in the category of topological spaces in a way that extends classical Gromov topology and such that its restriction to simple super stable curves of fixed tree type yields the orbit functor of a superorbifold.

\subsection*{Acknowledgments}
Enno Keßler was supported by a Deutsche Forschungsgemeinschaft Research Fellowship, KE2324/1--1.
Artan Sheshmani was supported partially by the NSF~DMS-1607871, NSF~PHY-1306313, the Simons~38558, and Laboratory of Mirror Symmetry NRU HSE, RF Government grant, ag. No~14.641.31.0001.
Shing-Tung Yau was partially supported by NSF~DMS-1607871, NSF~PHY-1306313, and Simons~38558.
We thank Tyler Jarvis, Jürgen Jost, Albrecht Klemm, Slava Matveev and Paolo Perrone for useful discussion and comments.
 
\section{Slice theorem and superorbifolds}
In this chapter we study the quotient of supermanifolds by proper actions of super Lie groups such that the isotropy groups of the action are finite.
This is the main technical tool for the construction of the moduli spaces of stable curves and maps in later chapters.
We proceed in several steps:
First we set the basic notations for supermanifolds in the ringed space approach and their functor of points in Sections~\ref{Sec:RingedSpaceApproachToSupermanifolds} and~\ref{sec:FunctorOfPoints}.
In Section~\ref{sec:ProductsAndQuotients} we recall quotients of supermanifolds by regular equivalence relations.
Geodesics and exponential maps on Riemannian supermanifolds are constructed in Section~\ref{sec:GeodesicsOnRiemannianSupermanifolds}.
We use the exponential map in Section~\ref{sec:GroupActions} to construct slices, that is, local complements to the orbit of a proper action of a super Lie group.
This yields a supergeometric analogue of the slice theorem by~\cite{P-ESANCLG} for proper actions of a super Lie group on a Riemannian supermanifold.
A proper and free group action yields a regular equivalence relation and the quotient is a supermanifold.
But if the group action has finite isotropy groups the quotient is a superorbifold instead.
While superorbifolds have appeared implicitly before, we give a systematic construction of superorbifolds using super Lie groupoids in Section~\ref{sec:Superorbifolds}.

\subsection{The ringed space approach to supermanifolds}\label{Sec:RingedSpaceApproachToSupermanifolds}
In this article we need two approaches to supermanifolds:
The Berezin--Kostant--Leites approach via ringed spaces and the Molotkov--Sachse approach via the functor of points.
For a detailed introduction to supermanifolds in the ringed space approach, we refer to~\cites{L-ITS}{DM-SUSY}{EK-SGSRSSCA}.
In this approach, a supermanifold~\(M\) consists of a topological space~\(\Top{M}\) together with a sheaf of supercommutative rings~\(\cO_M\) such that  \(M=(\Top{M}, \cO_M)\) is locally isomorphic to
\begin{equation}
	\R^{m|n} = \left(\R^m, \cO_{\R^{m|n}}=C^\infty(\R^m, \R)\otimes_\R {\bigwedge}_{n}\right)
\end{equation}
as a locally ringed, supercommutative space.
The superdomain \(\R^{m|n}\) consists of the euclidean topological space \(\R^m\) together with the sheaf of real-valued smooth functions in \(m\) variables twisted by a real Graßmann algebra in \(n\) generators.
For the standard coordinates \(x^a\), \(a=1, \dotsc, m\) on \(\R^m\) and \(\eta^\alpha\), \(\alpha=1, \dotsc, n\) generators of the Graßmann algebra \({\bigwedge}_n\) we say that \(X^A=(x^a, \eta^\alpha)\) are the standard supercoordinates on \(\R^{m|n}\).
Here and henceforth we use the convention that small Latin characters number even objects, small Greek letters number odd objects and capital Latin letters number both even and odd objects.

Maps \(f\colon M\to N\) between supermanifolds are given by morphisms of locally ringed spaces.
By a Theorem of Leites, see~\cite[Theorem~2.1.7]{L-ITS}, any map between open superdomains \(f\colon \R^{p|q}\supset U\to V\subset\R^{m|n}\) is completely determined by the image of the coordinates \(X^A=(x^a, \eta^\alpha)\) of \(\R^{m|n}\) under \(f^\#\colon \cO_{\R^{m|n}}\to\cO_{\R^{p|q}}\).
Maps between supercommutative rings preserve the \(\Z_2\)-grading which for elements of \(\cO_{\R^{m|n}}\) is induced from the \(\Z\)-grading of \({\bigwedge}_n\).
Consequently, the sheaf \(\cO_M\) of any supermanifold \(M\) possesses a \(\Z_2\)-grading \(\cO_M={\left(\cO_M\right)}_0\oplus{\left(\cO_M\right)}_1\).

For our purposes we need to work with families of supermanifolds instead of single supermanifold.
A family of supermanifolds over the supermanifold \(B\) is a submersion \(b_M\colon M\to B\).
We will assume that \(B\) is a superpoint, that is of the form \(B=\R^{0|s}\).
Then, the family \(b_M\) is locally on \(M\) a projection \(U\times B\to B\).
A map between two families of supermanifolds \(b_M\colon M\to B\) and \(b_N\colon N\to B\) is a map \(f\colon M\to N\) such that \(b_N\circ f = b_M\).
However, we do not assume that \(B\) is fixed and allow for arbitrary base change and do only consider geometric constructions that are relative to \(B\) and invariant under base change.
In particular, we will assume that any supermanifold can be seen as a, possibly trivial, family over \(B\) and that \(B\) is large enough, that is, contains as many odd directions as needed.
For a detailled discussion of families of supermanifolds we refer to~\cite[Chapter~3]{EK-SGSRSSCA}.

\subsection{The functor of points approach to supermanifolds}\label{sec:FunctorOfPoints}
The functor of points approach uses an idea from algebraic geometry:
Describing a supermanifold \(M\) in terms of maps \(C\to M\).
We use the formalism developed in~\cites{M-IDCSM}{S-GAASTS} which describes a supermanifold purely in terms of its superpoints.
Let \(\cat{SPoint}\) be the category of superpoints, that is, of supermanifolds isomorphic to \(\R^{0|s}\) for some \(s\) and smooth maps between them.
For a supermanifold \(M\) and a superpoint \(C\in\cat{SPoint}\) we write
\begin{equation}
	\underline{M}(C) = \Set{p\colon C\to M}
\end{equation}
for the set of \(C\)-points of \(M\).
Explicitly, for \(M=\R^{m|n}\) a map \(p\colon C\to \R^{m|n}\) is given by a tuple
\begin{align}
	p^a=p^\#x^a&\in {\left(\cO_C\right)}_0, &
	p^\alpha = p^\#\eta^\alpha &\in{\left(\cO\right)}_1,
\end{align}
where \((x^a, \eta^\alpha)\) are coordinates on \(\R^{m|n}\).
The functions \(p^a\) and \(p^\alpha\) combine to an even element \((p^a, p^\alpha)\) of the graded tensor product \(\R^{m|n}\otimes \cO_C\) of graded vector spaces.
It follows that
\begin{equation}
	\underline{\R^{m|n}}(C)
	= {\left(\R^{m|n}\otimes \cO_C\right)}_0
	= {\left(\R^{m|n}\otimes {\bigwedge}_s\right)}_0
	= {\left({\bigwedge}_s\right)}_0^m\oplus{\left({\bigwedge}_s\right)}_1^n
\end{equation}
is a manifold.
As every supermanifold is locally isomorphic to \(\R^{m|n}\), also \(\underline{M}(C)\) is a manifold.
The manifold \(\underline{M}(\R^{0|0})\) coincides with the reduced manifold \(\Red{M}\) of \(M\).

For every map \(c\colon C'\to C\) and \(p\in \underline{M}(C)\) the composition \(p\circ c\) is in \(\underline{M}(C')\).
Hence \(\underline{M}\) is a functor from the opposite of the category of superpoints to the category of manifolds:
\begin{equation}
	\underline{M}\colon \cat{SPoint}^{op}\to \cat{Man}
\end{equation}
Conversely, it was worked out in~\cites{M-IDCSM}{S-GAASTS} that a functor \(\underline{M}\colon \cat{SPoint}^{op}\to \cat{Man}\) is a supermanifold, if and only if it possesses a cover by open subfunctors which are isomorphic to \(\underline{\R^{m|n}}\).
Here a subfunctor \(\underline{U}\subset \underline{M}\) is open if for all \(C\) the subset \(\underline{U}(C)\) is open in \(\underline{M}(C)\).
Maps between supermanifolds are realized as functor morphisms \(\underline{M}\to \underline{N}\).

A \(C\)-point of a family \(b_M\colon M\to B\) of supermanifolds consists of a commutative triangle
\begin{diag}
	\matrix[dr](m){
		C&& M \\
		& B & \\
	} ;
	\path[pf]{
		(m-1-1) edge node{\(p\)} (m-1-3)
		edge node[swap]{\(C_B\)} (m-2-2)
		(m-1-3) edge node{\(b_M\)}(m-2-2)
	};
\end{diag}
The local picture is
\begin{diag}
	\matrix[dr](m){
		C&& \R^{m|n}\times B \\
		& B & \\
	} ;
	\path[pf]{
		(m-1-1) edge node{\(p\)} (m-1-3)
		edge node[swap]{\(C_B\)} (m-2-2)
		(m-1-3) edge node{\(b_M\)}(m-2-2)
	};
\end{diag}
For fixed \(C_B\) and \(b_M\) the set of \(C_B\)-points of \(\R^{m|n}\times B\) coincides with \(\underline{\R^{m|n}}(C)={\left(\R^{m|n}\otimes \cO_C\right)}_0\).
Consequently, a supermanifold \(M\) over \(B\) yields a functor
\begin{equation}
	\underline{M}\colon \cat{SPoint}_B^{op}\to \cat{Man}
\end{equation}
which is locally isomorphic to \(\underline{\R^{m|n}}\).
Here \(\cat{SPoint}_B\) is the category of superpoints over \(B\) where the objects consist of maps \(C_B\colon C\to B\) from a superpoint \(C\) to \(B\) and the morphisms \(C_B\to C'_B\) are commutative triangles of the form
\begin{diag}
	\matrix[dr](m){
		C&& C' \\
		& B & \\
	} ;
	\path[pf]{
		(m-1-1) edge (m-1-3)
		edge node[auto, swap]{\(C_B\)} (m-2-2)
		(m-1-3) edge node[auto]{\(C'_B\)}(m-2-2)
	};
\end{diag}
The category \(\cat{SPoint}_B\) contains an initial object \(\R^{0|0}_B\colon \R^{0|0}\to B\) and a terminal object \(\id_B\colon B\to B\).
Furthermore, as \(\R^{0|0}\) is also terminal object in \(\cat{SPoint}\) we have \(\cat{SPoint}_{\R^{0|0}}=\cat{SPoint}\).

\subsection{Products and quotients}\label{sec:ProductsAndQuotients}
In this section we recall the notions of fibered product and quotients of a supermanifold by a regular equivalence relation.
Here we follow the work of~\cites{A-STGM}{BBRHP-QS}{AH-IBIHS}.
Let \(f_1\colon M_1\to N\) and \(f_2\colon M_2\to N\) be two maps between supermanifolds.
The fibered product \(M_1\times_{f_1,f_2} M_2\) is a supermanifold together with two projections \(p_i\colon M_1\times_{f_1, f_2} M_2\to M_i\) such that \(f_1\circ p_1=f_2\circ p_2\).
The fiber product is characterized by the following universal property:
For any supermanifold \(S\) and maps \(g_i\colon S\to M_i\) such that \(f_1\circ g_1=f_2\circ g_2\) there exists a map \((g_1, g_2)\colon S\to M_1\times_{f_1, f_2} M_2\) such that \(g_i=p_i\circ (g_1, g_2)\).
\begin{diag}
	\matrix(m)[mat] {S &&\\
		& M_1\times_{f_1, f_2}M_2 & M_2\\
		& M_1 & N \\};
	\path[pf]
		(m-1-1)	edge [densely dotted] node[auto] {\((g_1,g_2)\)} (m-2-2)
			edge [bend left=20] node[auto] {\(g_2\)} (m-2-3)
			edge [bend right=20] node[auto] {\(g_1\)} (m-3-2)
		(m-2-2) edge node[auto,swap] {\(p_2\)} (m-2-3)
			edge node[auto] {\(p_1\)} (m-3-2)
		(m-2-3) edge node[auto] {\(f_2\)} (m-3-3)
		(m-3-2) edge node[auto] {\(f_1\)} (m-3-3);
\end{diag}
A sufficient condition for existence of the fibered product is that one of the maps \(f_i\) is a submersion.
If the fibered product exists, it is of dimension
\begin{equation}
	\dim M_1\times_{f_1, f_2} M_2 = \dim M_1 + \dim M_2 - \dim N
\end{equation}
and the projections \(p_i\) are surjective submersions.
If the maps \(f_i\) are clear from the context, for example for families \(f_i=b_{M_i}\colon M_i\to B\) we may also use the shorthands \(M_1\times_B M_2\) or \(M_1\times M_2\).

\begin{defn}\label{defn:EquivalenceRelation}
	Let \(M\) be a supermanifold and \(j\colon R\to M\times M\) a subsupermanifold.
	Let \(\Delta\colon M\to M\times M\) denote the diagonal morphism, \(p_1\), \(p_2\colon M\times M\to M\) the projection on the first and second factor respectively and \(\overline{p}_k=p_k\circ j\).
	The subsupermanifold \(j\colon R\to M\times M\) is called an equivalence relation in \(M\) if
	\begin{enumerate}
		\item\label{item:defn:EquivalenceRelation:Diagonal}
			there exists a morphism \(\delta\colon M\to R\) such that \(j\circ \delta = \Delta\),
		\item\label{item:defn:EquivalenceRelation:Associativity}
			there is a morphism \(c\colon R\times_{\overline{p}_2, \overline{p}_1} R\to R\) such that
				\begin{equation}
					\begin{tikzpicture}[commutative diagrams/.cd, every diagram, baseline=(current bounding box.center)]
						\matrix[mat](m){
							R\times_{\overline{p}_2,\overline{p}_1} R & R\\
							R & M\\
						} ;
						\path[pf]{
							(m-1-1) edge node[auto]{\(c\)} (m-1-2)
								edge node[auto]{\(\pi_k\)} (m-2-1)
							(m-1-2) edge node[auto]{\(\overline{p}_k\)}(m-2-2)
							(m-2-1) edge node[auto]{\(\overline{p}_k\)}(m-2-2)
						};
					\end{tikzpicture}
					\hspace{5em}
					\begin{tikzpicture}[commutative diagrams/.cd, every diagram, baseline=(current bounding box.center)]
						\matrix[mat, column sep=large](m){
							R\times_{\overline{p}_2,\overline{p}_1}R\times_{\overline{p}_2,\overline{p}_1}R & R\times_{\overline{p}_2,\overline{p}_1}R \\
							R\times_{\overline{p}_2,\overline{p}_1}R & R\\
						} ;
						\path[pf]{
								(m-1-1) edge node[auto]{\(\id\times_{\overline{p}_2,\overline{p}_1} c\)} (m-1-2)
								edge node[auto]{\(c\times_{\overline{p}_2,\overline{p}_1} \id\)} (m-2-1)
							(m-1-2) edge node[auto]{\(c\)}(m-2-2)
							(m-2-1) edge node[auto]{\(c\)}(m-2-2)
						};
					\end{tikzpicture}
				\end{equation}
				are commutative.
				Here \(\pi_1\), \(\pi_2\colon R\times_{\overline{p}_2,\overline{p}_1}R\to R\) are the projections on the first and second factor respectively.
			\item\label{item:defn:EquivalenceRelation:Symmetry}
				there is a map \(i\colon R\to R\) such that \(i\circ i=\id_R\) and \(\overline{p}_1\circ i = \overline{p}_2\).
	\end{enumerate}
	The equivalence relation is called regular if \(j\colon R\to M\times M\) is a closed subsupermanifold and \(\overline{p}_1\colon R\to M\) is a submersion.
\end{defn}

\begin{defn}\label{defn:QuotientEquivalenceRelation}
	Let \(j\colon R\to M\times M\) be an equivalence relation in \(M\).
	A submersion \(q\colon M\to Q\) is called quotient of \(M\) by \(R\) if \(q\circ\overline{p}_1 = q\circ\overline{p}_2\) and any map \(f\colon M\to N\) such that \(f\circ\overline{p}_1 = f\circ \overline{p}_2\) factors over \(Q\), that is there exists a map \(\overline{f}\colon Q\to N\) such that \(f=\overline{f}\circ q\).
	\begin{diag}
		\matrix[mat](m){
			R & M & Q\\
				& & N\\
		} ;
		\path[pf]{
			(m-1-1) edge[bend left=40] node[auto]{\(\overline{p}_1\)} (m-1-2)
				edge[bend right=40] node[auto, swap]{\(\overline{p}_2\)} (m-1-2)
			(m-1-2) edge node[auto]{\(q\)} (m-1-3)
				edge node[auto]{\(f\)} (m-2-3)
			(m-1-3) edge[commutative diagrams/dashrightarrow]  node[auto]{\(\overline{f}\)} (m-2-3)
		};
	\end{diag}
	Such a quotient, if it exists, is unique up to superdiffeomorphism and will be denoted by~\(\faktor{M}{R}\).
\end{defn}

The following supergeometric analogue of a theorem of Godement's theorem, see~\cite[Section~5.9.5]{B-VDA}, is proved in~\cite[Theorem~2.6]{AH-IBIHS} based on the work of~\cites{A-STGM}{BBRHP-QS}:
\begin{prop}\label{prop:QuotientByRegularEquivalenceRelation}
	The quotient \(\faktor{M}{R}\) exists if and only if the equivalence relation \(R\) is regular.
	In that case the map
	\begin{equation}
		\overline{p}_1\times_{\faktor{M}{R}}\overline{p}_2\colon R \to M\times_{\faktor{M}{R}} M
	\end{equation}
	is a superdiffeomorphism.
\end{prop}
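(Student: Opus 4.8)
The plan is to reduce the statement to a local one — the existence of a \emph{slice} transverse to the equivalence classes — and then glue, following the classical proof of Godement's criterion. The common thread for both implications is the morphism $\overline{p}_1\times_{\faktor{M}{R}}\overline{p}_2\colon R\to M\times_{\faktor{M}{R}}M$, so I would build the argument around it. One implication is quick: if a quotient $q\colon M\to Q$ exists, then by Definition~\ref{defn:QuotientEquivalenceRelation} it is a submersion, hence $M\times_Q M=(q\times q)^{-1}(\Delta_Q)$ is a closed subsupermanifold of $M\times M$, using that $\Delta_Q$ is closed because $Q$ is a supermanifold. The identity $q\circ\overline{p}_1=q\circ\overline{p}_2$ produces the morphism above into $M\times_Q M$; it is bijective on $\R^{0|0}$-points because $\Top{Q}$ is the set-theoretic quotient of $\Top{M}$ by $\Top{\Red{R}}$, and the axioms of Definition~\ref{defn:EquivalenceRelation} should upgrade this to an isomorphism. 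Then $\overline{p}_1$ is realized as a base change of the submersion $q$, hence is itself a submersion, and $R\cong M\times_Q M$ is closed — so $R$ is regular, and the final assertion of the proposition comes for free.

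For the other implication, assume $R$ is regular and fix a $B$-point $x_0$ of $M$, with diagonal lift $r_0=\delta\circ x_0$ into $R$ (using $j\circ\delta=\Delta$). Since $\overline{p}_1$ is a submersion, $\overline{p}_1^{-1}(x_0)$ is a subsupermanifold of $R$ through $r_0$, and by the equivalence-relation axioms its image $\Lambda=\overline{p}_2(\overline{p}_1^{-1}(x_0))$ — the equivalence class of $x_0$ — is a subsupermanifold of $M$ of dimension $\dim R-\dim M$. I would then choose a subsupermanifold $S\hookrightarrow M$ through $x_0$ transverse to $\Lambda$ with $\dim S+\dim\Lambda=\dim M$. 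Since $\overline{p}_2=\overline{p}_1\circ i$ is also a submersion (symmetry axiom), it is transverse to $S$, so $R\cap(M\times S)=\overline{p}_2^{-1}(S)$ is a subsupermanifold of dimension $\dim M$; a tangent-space check at $r_0$ — any vector of $T(R\cap(M\times S))$ annihilated by $d\overline{p}_1$ lies in $T\Lambda\cap TS=0$ — shows that $\overline{p}_1\colon R\cap(M\times S)\to M$ is a local isomorphism at $r_0$, by the inverse function theorem for supermanifolds. Inverting it over a neighborhood $U$ of $x_0$ and composing with $\overline{p}_2$ should produce a retraction $r\colon U\to S$ together with a morphism $(\mathrm{id}_U,r)\colon U\to R$, so that every point of $U$ is $R$-related to its image under $r$; after shrinking $S$ so that $R\cap(S\times S)=\Delta_S$, one obtains that two points $u,u'$ of $U$ satisfy $(u,u')\in R$ if and only if $r\circ u=r\circ u'$. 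Thus $r\colon U\to S$ is a local quotient.

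It then remains to glue. Local quotients are unique up to isomorphism by their universal property, so on an overlap the two slices are canonically identified via the smooth composite $S_1\hookrightarrow U_1\cap U_2\xrightarrow{\,r_2\,}S_2$; these identifications satisfy the cocycle condition, hence the slices assemble into a supermanifold $Q$ with a submersion $q\colon M\to Q$, whose underlying topological space is $\Top{M}/\Top{\Red{R}}$ — Hausdorff precisely because $R$ is closed in $M\times M$ (classical Godement applied to the reduced spaces). The universal property of $Q$ follows by gluing the local ones, since any $f$ with $f\circ\overline{p}_1=f\circ\overline{p}_2$ is constant along equivalence classes, and the isomorphism $R\cong M\times_Q M$ follows as in the first implication, or directly from the slice picture together with the count $\dim(M\times_Q M)=2\dim M-\dim S=\dim M+\dim\Lambda=\dim R$.

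The hard part will be this gluing step: verifying that the locally built slices patch together into a \emph{single, Hausdorff} supermanifold with smooth transition morphisms. That is exactly where both hypotheses are indispensable — $\overline{p}_1$ a submersion for the existence and smoothness of the retractions $r$, hence of the transition maps, and $R$ closed for Hausdorffness — and it is also where one must check that every construction above is compatible with arbitrary base change of the parameter $B$, so that the result is legitimate in the relative framework used throughout the paper.
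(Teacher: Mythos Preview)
The paper does not give its own proof of this proposition: it simply attributes the result to~\cite[Theorem~2.6]{AH-IBIHS}, building on~\cites{A-STGM}{BBRHP-QS}, and moves on. So there is nothing to compare against directly; what you have written is in fact a sketch of the argument those references carry out, namely the supergeometric version of Godement's criterion via local slices and gluing.

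Your outline is the correct strategy and matches the literature. One place where your sketch is genuinely thin is the first implication: the sentence ``bijective on \(\R^{0|0}\)-points \dots\ the axioms should upgrade this to an isomorphism'' hides the real work. Bijectivity on reduced points is far from an isomorphism of supermanifolds, and the universal property of \(Q\) as stated only gives you the forward inclusion \(R\hookrightarrow M\times_Q M\), not the reverse. What actually closes the gap is local: since \(q\) is a submersion it is locally a projection \(V\times F\to V\), so \(M\times_Q M\) is locally \(V\times F\times F\); one then checks, using the section \(\delta\) and the composition \(c\), that \(R\) restricted to this chart has the same \(C\)-points for every \(C\), hence coincides as a subsupermanifold. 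Alternatively, one shows \(R\to M\times_Q M\) is an immersion of equal dimension and bijective on reduced points. Either way, this step deserves an explicit argument rather than an appeal to the axioms.

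In the second implication your slice construction is standard and correct in outline; the shrinking step ``\(R\cap(S\times S)=\Delta_S\)'' uses closedness of \(R\) together with the fact that your local inverse already forces \(r|_S=\id_S\), and the gluing of local quotients is exactly where the cited references spend their effort. If you intend to write this out fully rather than cite it, those are the two places to expand.
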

\begin{rem}
	The existence of a superdiffeomorphism identifying \(R=M\times_{\faktor{M}{R}} M\) implies that for both even and odd dimension we have
	\begin{equation}\label{eq:DimensionQuotientEquivalenceRelation}
		\dim R = 2\dim M - \dim \faktor{M}{R}.
	\end{equation}
\end{rem}
\begin{rem}\label{rem:EquivalenceRelationSuperPoints}
	The Definition~\ref{defn:EquivalenceRelation} of equivalence relation and the Definition~\ref{defn:QuotientEquivalenceRelation} of quotient reduce to their classical counterpart for point sets when the supermanifold \(M\) has odd dimension zero.
	More generally, if \(M\) is a supermanifold over \(B\) and \(C_B\colon C\to B\) a \(B\)-point the set \(\underline{R}(C_B)\) of \(C_B\)-points of \(R\) form an equivalence relation over the set~\(\underline{M}(C_B)\) of \(C_B\) points of \(M\).
	If the quotient \(Q\) exists as a supermanifold, it holds \(\underline{Q}(C_B)=\faktor{\underline{M}(C_B)}{\underline{R}(C_B)}\) as manifolds, see~\cite[Section~6.4]{M-IDCSM}.

	If the quotient does not exist, one can still define a functor with values in the category of topological spaces
	\begin{equation}
		\begin{split}
			\cat{SPoint}^{op}_B &\to \cat{Top} \\
			C_B &\mapsto \faktor{\underline{M}(C_B)}{\underline{R}(C_B)}
		\end{split}
	\end{equation}
	which represents the equivalence classes of superpoints of the quotient, but does not represent a supermanifold.
	In Section~\ref{sec:Superorbifolds} below we will see how to give this quotient the structure of a superorbifold in certain cases where the equivalence relation is not regular.
\end{rem}

\subsection{Geodesics on Riemannian supermanifolds}\label{sec:GeodesicsOnRiemannianSupermanifolds}
In this section we construct geodesics and exponential maps on supermanifolds with a Riemannian metric.
While geodesics on supermanifolds have appeared before in~\cites{G-RSG}{GW-GFRS}, the variant we propose here allows for an exponential map that identifies an open subset of the superorbifold around a point with an open subset of the tangent space.

Consider a curve \(\gamma\), that is, a map from an interval \(I\subset\R\) to \(M\) parametrized by the superpoint \(C_B\):
\begin{equation}
	\gamma\colon I\times C\to M\times_B C.
\end{equation}
For any time parameter \(t\in I\) the evaluation \(\gamma(t)\) is a \(C_B\)-point of \(M\), \(\gamma(t)\colon C\to M\times_B C\) or \(\gamma(t)\in \underline{M}(C_B)\).
The tangent vector of the curve at time \(t\) is given by an even element
\begin{equation}
	\left.\differential{\gamma}\left(\partial_t\right)\right|_{t}\in\VSec{C_B^*p^*\tangent{M}} = \VSec{p^*\tangent{M}}\otimes_{\cO_B} \cO_C.
\end{equation}

In order to simplify the notation we introduce the following:
\begin{defn}
	Let \(M\) be a supermanifold over \(B\), \(E\to M\) a super vector bundle and \(p\colon B\to M\) be a \(B\)-point of \(M\).
	We define the functor \(\underline{\VSec[p]{E}}\) of sections of \(E\) at \(p\) by
	\begin{equation}
		\begin{split}
			\underline{\VSec[p]{E}}\colon \cat{SPoint}^{op}_B &\to \cat{Mod}_{{\left(\cO_B\right)}_0} \\
			\left(C_B\colon C\to B\right) &\mapsto {\VSec{C_B^*p^*E}}_0 = {\left(\VSec{p^*E}\otimes_{\cO_B}\cO_C\right)}_0
		\end{split}
	\end{equation}
\end{defn}
Using this notation we write \(\left.\differential{\gamma}\left(\partial_t\right)\right|_{t}\in \underline{\VSec[p]{\tangent{M}}}(C_B)\).
The tangent map \(\tangent{f}\colon \tangent{M}\to \tangent{N}\) of a map \(f\colon M\to N\) induces functor transformations \(\underline{\VSec[p]{\tangent{M}}}\to \underline{\VSec[f\circ p]{\tangent{N}}}\).

Let now \(m\) a Riemannian metric on \(\tangent{M}\) and \(\nabla\) the corresponding Levi-Civita connection.
A Riemannian metric on the supermanifold \(M\to B\) is an even, symmetric, non-degenerate \(\cO_M\)-bilinear form \(m\colon\VSec{\tangent{M}}\times\VSec{\tangent{M}}\to\cO_M\) such that its reduction~\(\Red{m}\) is a positive definite metric on \(\Red{\left(\tangent{M}\right)}\to \Red{M}\), see~\cite[Definition~6.9.1]{EK-SGSRSSCA}.
In particular, the symmetry condition requires that for all \(X\), \(Y\in\VSec{\tangent{M}}\)
\begin{equation}
	m(X, Y) = {\left(-1\right)}^{\p{X}\p{Y}}m(Y, X).
\end{equation}
The symmetry condition together with the non-degeneracy requires that the number of odd dimensions is even.
Conversely, Riemannian metrics can be shown to exist on any supermanifold of dimension \(p|2q\) by using a partition of unity as in classical differential geometry.
Given an isomorphism of supermanifolds \(f\colon N\to M\) we denote the induced metric on \(N\) by \(m_f\).
For the theory of connections on vector bundles and principal bundles in supergeometry see~\cite[Chapter~4.4, Chapter~6]{EK-SGSRSSCA}.
The existence of a Levi-Civita connection~\(\nabla\) on supermanifolds with Riemannian metric has been shown in~\cites[Lemma~6.9.6]{EK-SGSRSSCA}[Theorem~4.1]{G-RSG}.

\begin{prop}\label{prop:ExistenceOfGeodesics}
For every \(B\)-point \(p\colon B\to M\) and every \(v\in\underline{\VSec[p]{\tangent{M}}}(C_B)\) there exists an open interval \(I=(-T, T)\subset\R\) and a unique map over \(C\)
	\begin{equation}
		\gamma_{p,v}\colon I\times C\to M\times_B C
	\end{equation}
	such that
	\begin{align}
		\nabla_{\partial_t} \differential{\gamma_{p,v}}\left(\partial_t\right) &=0, &
		\gamma(0) &= p\circ C_B, &
		\left.\differential{\gamma_{p,v}}\left(\partial_t\right)\right|_{t=0} &= v.
	\end{align}
	We call \(\gamma_{p,v}\) the geodesic going through the point \(p\) in direction of \(v\).
\end{prop}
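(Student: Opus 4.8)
The plan is to write the geodesic equation $\nabla_{\partial_t}\differential{\gamma}(\partial_t)=0$ in local coordinates, observe that after expanding all quantities in a real basis of $\cO_C$ it becomes a classical second-order ordinary differential equation with smooth right-hand side, invoke the classical Picard--Lindelöf theorem, reassemble the resulting coordinate functions into a morphism of supermanifolds by the cited theorem of Leites, and finally glue the local solutions along the reduced geodesic.

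First I would localize. The $B$-point $p$ has an underlying topological point $\Top{p}\in\Top{M}$, and since $M$ is a family over $B=\R^{0|s}$ a neighbourhood $U$ of $\Top{p}$ carries supercoordinates $X^{A}$ identifying it with an open subset of $\R^{m|2n}\times B$; on $U$ the Levi-Civita connection $\nabla$ is described by Christoffel symbols $\Gamma^{A}_{BC}\in\cO_{U}$. By the cited theorem of Leites, a curve $\gamma\colon I\times C\to M\times_B C$ over $C$ whose reduction $\Red{\gamma}$ stays in $U$ is precisely a tuple $\gamma^{A}=\gamma^{\#}X^{A}\in\cO_{I\times C}$, with $\gamma^{A}$ even for $A$ even and odd for $A$ odd and $\Red{\gamma}\colon I\to\R^{m}$ landing in the chart domain. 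Writing a dot for differentiation along $\partial_t$, the geodesic equation then takes the familiar schematic form
\begin{equation}
 \ddot\gamma^{A} = -\,\bigl(\Gamma^{A}_{BC}\circ\gamma\bigr)\,\dot\gamma^{B}\,\dot\gamma^{C}
\end{equation}
up to signs and index orderings dictated by the parities, while the initial conditions become $\gamma^{A}(0)=(p\circ C_B)^{\#}X^{A}$ and $\dot\gamma^{A}(0)=v^{A}$.

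To solve this I would expand in a homogeneous $\R$-basis $(\theta^{I})$ of the finite-dimensional Graßmann algebra $\cO_C$, writing $\gamma^{A}=\sum_{I}\gamma^{A}_{I}\theta^{I}$ with $\gamma^{A}_{I}\in C^{\infty}(I,\R)$. Taylor expanding the smooth functions $\Gamma^{A}_{BC}$ about $\Red{\gamma}(t)$ — a finite expansion, since the nilpotent parts of the $\gamma^{a}$ lie in a nilpotent ideal — turns the equation into a genuine second-order system
\begin{equation}
 \ddot\gamma^{A}_{I} = F^{A}_{I}\bigl(t,(\gamma^{B}_{J})_{B,J},(\dot\gamma^{B}_{J})_{B,J}\bigr)
\end{equation}
on an open subset of a finite-dimensional real vector space, with $F^{A}_{I}$ smooth (polynomial in the variables associated to odd and nilpotent directions, smooth through the dependence on $\Red{\gamma}$). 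The classical existence and uniqueness theorem for ordinary differential equations produces a unique maximal solution with the prescribed initial data on an open interval about $0$, which after restriction we may take of the form $I=(-T,T)$. Reassembling the $\gamma^{A}_{I}$ into the $\gamma^{A}$ and invoking the theorem of Leites again yields the morphism $\gamma_{p,v}$; it is unique because a morphism into $M$ is determined by the images of the coordinates, which here are forced by the equation and the initial data. If $\Red{\gamma_{p,v}}$ leaves $U$, one covers its image over each closed subinterval by finitely many charts and glues the local solutions using this uniqueness, obtaining the maximal geodesic; the construction is manifestly natural under base change in $C_B$.

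The main obstacle is the bookkeeping in the first two steps: fixing the super-sign conventions in the coordinate form of $\nabla_{\partial_t}\differential{\gamma}(\partial_t)$, and making rigorous that a curve $I\times C\to M\times_B C$ over $C$ is faithfully and smoothly encoded by the finite tuple of real functions $\gamma^{A}_{I}$ evolving as above; once this translation is in place the statement is the classical Picard--Lindelöf theorem. I note in particular that allowing $v$ to be an arbitrary even element of $\VSec{p^{*}\tangent{M}}\otimes_{\cO_B}\cO_C$ rather than merely the pullback of a reduced tangent vector causes no difficulty, since it only prescribes the initial velocities $\dot\gamma^{A}_{I}(0)$ for all multi-indices $I$. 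Alternatively one could argue by integrating the geodesic spray, an even vector field on $\tangent{M}$ built from $\nabla$, but in coordinates this reduces to the same ODE.
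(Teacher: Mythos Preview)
Your proposal is correct and follows essentially the same approach as the paper: localize, write the geodesic equation in supercoordinates, and expand in the nilpotent directions of \(\cO_C\) to reduce to a classical second-order ODE system. The paper adds one observation you omit, namely that when expanding recursively by degree in the odd generators the higher-degree equations are \emph{linear} ODEs over the reduced geodesic, so the interval of existence of \(\gamma_{p,v}\) is exactly that of \(\Red{\gamma_{p,v}}\); your bulk Picard--Lindel\"of argument gives existence on some interval but not this sharper control.
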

\begin{proof}
	Let \((X^A)=(x^a, \eta^\alpha)\) be relative coordinates in a neighborhood of \(p\) and denote the Christoffel symbols of the Levi-Civita connection on \(M\) by \(\nabla_{\partial_{X^D}}\partial_{X^E}= \Gamma_{DE}^F\partial_{X^F}\).
	Suppose that \(p_C=p\circ C_B\) is given in those coordinates by \(p_C^{\#}X^A = p^A\in{\left(\cO_C\right)}_{\p{A}}\) and
	\begin{equation}
		v = v^a\partial_{x^a} + v^\alpha\partial_{\eta^\alpha}
	\end{equation}
	for \(v^a\in{\left(\cO_C\right)}_0\) and \(v^\alpha\in{\left(\cO_C\right)}_1\).
	Writing \(\gamma^A(t)=\gamma_{p,v}^{\#}X^A\) as well as \(\dot{\gamma}^A(t)\) and \(\ddot{\gamma}^A(t)\) for its first and second time derivative respectively, the differential equations for geodesics read
	\begin{align}
		\ddot{\gamma}^A(t) + \dot{\gamma}^E(t)\dot{\gamma}^D(t){\gamma(t)}^{\#}\Gamma_{DE}^A &= 0, &
		\gamma^A(0) &= p^A, &
		\dot{\gamma}^A(0) &= v^A.
	\end{align}
	This is a set of ordinary differential equations of second order with coefficients in \(\cO_C\) that can be solved uniquely by expanding recursively with respect to the odd generators of \(\cO_C\).
	Notice that the equation of degree zero corresponds to the equation for \(C_B=\R^{0|0}_B\) and is hence the usual geodesic equation, that is, a non-linear ordinary differential equation.
	The differential equations for the higher order coefficients are linear ordinary differential equations depending on the reduced geodesic.
	As linear ordinary differential equations can be solved for all times, super geodesics can be extended in time-direction as far as the reduced geodesic.
\end{proof}
Geodesics in supergeometry share some features with their classical counterpart:
\begin{itemize}
	\item
		Geodesics are parametrized proportionally to arc-length.
		Indeed,
		\begin{equation}
			\frac{\d}{\d{t}}\norm{\differential{\gamma_{p,v}}(\partial_t)}^2
			= m\left(\nabla_{\partial_t}\differential{\gamma_{p,v}}(\partial_t), \differential{\gamma_{p,v}}(\partial_t)\right) + m\left(\differential{\gamma_{p,v}}(\partial_t), \nabla_{\partial_t}\differential{\gamma_{p,v}}(\partial_t)\right)
			= 0.
		\end{equation}
		Consequently, \(\norm{\differential{\gamma_{p,v}}(\partial_t)}^2=\norm{v}^2\) for all times \(t\).
	\item
		Let \(\gamma_{p,v}\colon (-T, T)\times C\to M\times_B C\) be the geodesic going through \(p\) and in direction \(v\in\underline{\VSec[p]{\tangent{M}}}(C_B)\).
		For any \(\lambda\in\R\) the map \(\gamma\colon (-\frac{T}{\lambda}, \frac{T}{\lambda})\times C\to M\times_B C\) given by \(\gamma(t) = \gamma_{p,v}(\lambda t)\) is also a geodesic and satisfies \(\gamma(0)=p\) and \(\left.\differential{\gamma}(\partial_t)\right|_{t=0}=\lambda v\).
		Hence \(\gamma_{p,\lambda v}(t) = \gamma_{p, v}(\lambda t)\) for all \(\lambda\in\R\).
	\item
		Let \(f\colon M\to M\) be an isometry, that is \(m_f=m\).
		Then \(f\) maps geodesics to geodesics.
\end{itemize}
\begin{prop}
	There is an open subfunctor of \(\underline{U}\subset\underline{\VSec[p]{\tangent{M}}}\) such that for all \(C_B\colon C\to B\) and \(v\in \underline{U}(C_B)\) the geodesic \(\gamma_{p,v}\) is defined on an open interval containing \([-1,1]\).
	We define the exponential map \(\underline{\exp_p}\colon\underline{U}\to\underline{M}\) at \(p\colon B\to M\) by
	\begin{equation}
		\begin{split}
			\underline{\exp_p}(C_B)\colon \underline{U}(C_B)\subset\underline{\VSec[p]{\tangent{M}}}(C_B)&\to \underline{M}(C_B) \\
			v&\mapsto \gamma_{p,v}(1)
		\end{split}
	\end{equation}
	The exponential map is an isomorphism from an open subfunctor \(\underline{U}'\subset \underline{U}\) onto its image, an open subfunctor of \(\underline{M}\) around \(p\).
\end{prop}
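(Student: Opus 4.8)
The plan is to follow the classical proof that the exponential map is a local diffeomorphism near the origin of the tangent space, carried out relative to the base \(B\) and phrased through the functor of points.

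For the first assertion I would first recast the second order geodesic equation of Proposition~\ref{prop:ExistenceOfGeodesics} as the flow of the geodesic spray, a vector field on the total space of \(\tangent{M}\) regarded as a supermanifold over \(B\). The recursive solution scheme from the proof of Proposition~\ref{prop:ExistenceOfGeodesics}, together with smooth dependence of solutions of ordinary differential equations on their initial conditions, shows that this flow is a morphism of supermanifolds over \(B\); in particular, after choosing relative coordinates around \(p\) and the induced frame of \(p^*\tangent{M}\), the assignment \(v\mapsto\gamma_{p,v}(1)\) is locally a morphism of superdomains over \(B\). Since \(\underline{\VSec[p]{\tangent{M}}}\) is locally isomorphic to \(\underline{\R^{m|2n}}\), it is the point functor of the total space of \(p^*\tangent{M}\to B\) and it makes sense to speak of open subfunctors. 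Let \(V\) be an open neighbourhood of \(0\) in \(T_{\Red p}\Red{M}\) on which every reduced geodesic \(\gamma_{\Red p, w}\) is defined on an interval containing \([-1,1]\); such a \(V\) exists because the domain of the classical geodesic flow is open. I would then define \(\underline{U}(C_B)\) to be the preimage of \(V\) under the reduction map \(\underline{\VSec[p]{\tangent{M}}}(C_B)\to\underline{\VSec[p]{\tangent{M}}}(\R^{0|0}_B)\) induced by the initial object \(\R^{0|0}_B\to C_B\); this subset is open and natural in \(C_B\), hence an open subfunctor. By the last line of the proof of Proposition~\ref{prop:ExistenceOfGeodesics} a super geodesic extends in time as far as its reduced geodesic, so for \(v\in\underline{U}(C_B)\) the geodesic \(\gamma_{p,v}\) is defined on an open interval containing \([-1,1]\), and \(\underline{\exp_p}\) is defined on all of \(\underline{U}\).

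For the second assertion I would apply the inverse function theorem at the zero section. The \(B\)-point \(0_p\) lies in \(\underline{U}\) and \(\underline{\exp_p}(0_p)=\gamma_{p,0}(1)=p\). Under the canonical identification of the tangent space of \(p^*\tangent{M}\) along its zero section with \(p^*\tangent{M}\) itself, the differential \(d(\underline{\exp_p})_{0_p}\) is the identity: for every \(C_B\) and every \(v\in\underline{\VSec[p]{\tangent{M}}}(C_B)\) the rescaling property \(\gamma_{p,\lambda v}(t)=\gamma_{p,v}(\lambda t)\) gives \(\underline{\exp_p}(tv)=\gamma_{p,v}(t)\), whose derivative at \(t=0\) is \(v\), and since this holds on \(C_B\)-points for every \(C_B\) it determines the differential. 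Hence \(d(\underline{\exp_p})_{0_p}\) is an isomorphism. As \(\underline{\VSec[p]{\tangent{M}}}\) and \(\underline{M}\) are supermanifolds over \(B\) of the same dimension, the inverse function theorem for supermanifolds — applied relative to \(B\), where it reduces to its version for superdomains — produces an open subfunctor \(\underline{U}'\subset\underline{U}\) containing \(0_p\) on which \(\underline{\exp_p}\) restricts to an isomorphism onto an open subfunctor of \(\underline{M}\); this image is an open subfunctor of \(\underline{M}\) around \(\underline{\exp_p}(0_p)=p\).

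I expect the first paragraph to be the main obstacle: one has to verify carefully, relative to the base \(B\), that the geodesic spray flow, and hence \(\underline{\exp_p}\), is a genuine morphism of supermanifolds rather than merely a compatible family of smooth maps on \(C_B\)-points, and one needs the inverse function theorem in exactly this relative, functor-of-points form. Once the rescaling property of geodesics is available, the differential computation in the second paragraph is immediate and the inverse function theorem does the rest.
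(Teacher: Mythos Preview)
Your proposal is correct and follows essentially the same line as the paper: define \(\underline{U}\) as the open subfunctor sitting over the classical domain of the reduced exponential map, invoke the extension property of super geodesics from Proposition~\ref{prop:ExistenceOfGeodesics}, compute the differential at \(0\) via the rescaling identity \(\gamma_{p,tv}(1)=\gamma_{p,v}(t)\), and conclude with the inverse function theorem. Your extra care about the geodesic spray being a morphism of supermanifolds is a point the paper leaves implicit, but the argument is otherwise the same.
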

\begin{proof}
	It is known that there is an open subset \(U\subset\underline{\VSec[p]{\tangent{M}}}(\R^{0|0}_B)\) such that for every \(v\in U\) the geodesic \(\gamma_{p,v}\) is defined on an open interval containing \([-1,1]\).
	Let \(\underline{U}=\left.\underline{\VSec[p]{\tangent{M}}}\right|_U\).
	As explained in the proof of Proposition~\ref{prop:ExistenceOfGeodesics} this implies that for every \(C_B\in\cat{SPoint}_B^{op}\) and \(v\in\underline{U}(C_B)\) the geodesic \(\gamma_{p,v}\) can be defined on an open interval containing \([-1,1]\).
	Consequently, the exponential map \(\underline{\exp_p}\) is well defined on \(\underline{U}\).

	By construction \(\underline{\exp_p}(C_B)(0)=p_C\).
	We will show, in analogy to classical differential geometry, that \(\differential{\underline{\exp_p}}=id\colon \underline{\VSec[p]{\tangent{M}}}\to\underline{\VSec[p]{\tangent{M}}}\), where we identify \(\underline{\VSec[p]{\tangent{U}}}\) with \(\underline{\VSec[p]{\tangent{M}}}\).
	Indeed, for \(v\in\underline{\VSec[p]{\tangent{M}}}(C_B)\), we have
	\begin{equation}
		\left(\left(\differential{\underline{\exp_p}}\right)(C_B)\right)v
		= \left(\differential{\left(\underline{\exp_p}(C_B)\right)}\right)v
		= \left.\frac{\d}{\d{t}}\right|_{t=0}\gamma_{p, tv}(1)
		= \left.\frac{\d}{\d{t}}\right|_{t=0}\gamma_{p, v}(t)
		= v.
	\end{equation}

	From the invertibility of the differential of \(\underline{\exp_p}\) it follows that \(\underline{\exp_p}\) is a local isomorphism by the supergeometric inverse function theorem, see, for example,~\cite[Proposition~6.3.1]{M-IDCSM}.
\end{proof}

\subsection{Group actions}\label{sec:GroupActions}
In this section, we first recall the notions of a super Lie groups and actions of super Lie groups on supermanifolds.
Then we give an explicit construction of the quotient of a supermanifold by a proper and free group action.
The main new contribution in this section is a generalization of the slice theorem to certain group actions on Riemannian supermanifolds in Proposition~\ref{prop:SliceFoliation}:
In particular, any supermanifold \(M\) with an even number of odd dimensions and a proper group action by \(G\) with finite isotropy group at a \(B\)-point \(p\) can be written in a neighborhood of the point \(p\) as \(G\times_{H_p} S\), where \(H_p\) is the isotropy group at \(p\).
This is in preparation for Section~\ref{sec:Superorbifolds} where we will see that the quotient of a manifold by a proper group action with finite isotropy groups is a superorbifold.

A super Lie group is a supermanifold \(G\) over \(B\) together with maps \(m\colon G\times_B G \to G\), \(i\colon G\to G\) and \(e\colon B\to G\) such that for any \(C_B\colon C\to B\) the maps \(\underline{m}(C_B)\), \(\underline{i}(C_B)\) and \(\underline{e}(C_B)\) equip \(\underline{G}(C_B)\) with a multiplication, inverse and neutral element satisfying the group laws.
Examples relevant to the work here are subgroups of the general linear groups \(\GL(r|s)\) and finite groups as super Lie groups of dimension \(0|0\).
The module of sections \(\underline{\VSec[e]{\tangent{G}}_0}\) can be identified with the \(\cO_B\)-module of left-invariant vector fields and inherits the structure of a Lie superalgebra which we denote by \(\mathfrak{g}\).
For details we refer to~\cite[Chapter~5]{EK-SGSRSSCA} and~\cites{DM-SUSY}{CCF-MFS}.

A  left-action of a super Lie group \(G\) over \(B\) on a supermanifold \(M\) over \(B\) is given by a map \(a\colon G\times_B M\to M\) such that for any \(C_B\colon C\to B\) the map \(\underline{a}(C_B)\colon \underline{G}(C_B)\times \underline{M}(C_B)\to \underline{M}(C_B)\) is an action of the group \(\underline{G}(C_B)\) on \(\underline{M}(C_B)\).

\begin{ex}[Action of \(\GL(p|q)\) on the supermanifold \(\R^{p|q}\)]
	The group of even, invertible linear maps from the super vector space \(\R^{p|q}\) to itself can be equipped with the structure of a super Lie group acting on the supermanifold \(\R^{p|q}\) as follows:
	We define its point functor by setting \(\underline{\GL(p|q)}(C) = \GL_{\cO_C}(\R^{p|q}\otimes \cO_C)\), that is, the \(C\)-points of \(\GL(p|q)\) are \(\cO_C\)-linear even automorphisms of \(\R^{p|q}\otimes \cO_C\).
	The multiplication map multiplies \(C\)-points, and the inverse and identity are likewise defined on \(C\)-points.

	Elements of \(\underline{\GL(p|q)}(C)\) act on \(\underline{\R^{p|q}}(C)\) which defines the action of the super Lie group~\(\GL(p|q)\) on the supermanifold \(\R^{p|q}\).
	For a description of \(\GL(p|q)\) in the ringed space approach, see~\cite{EK-SGSRSSCA} and references therein.
	Using subgroups of \(\GL(p|q)\) and homomorphisms of super Lie groups one can construct further interesting examples, see, for example,~\cite{FG-CS}.
\end{ex}
\begin{ex}[Action of \(\R^{r|s}\) on \(\R^{m|n}\)]\label{ex:ActionTranslation}
	Let \((X^B)=(x^b, \eta^\beta)\) be coordinates on \(\R^{r|s}\).
	The supermanifold \(\R^{r|s}\) can be equipped with an additive super Lie group structure as follows:
	\begin{align}
		\begin{split}
			m\colon \R^{r|s}\times \R^{r|s}&\to \R^{r|s} \\
			m^\#x^b &= x^b + \overline{x}^b \\
			m^\#\eta^\beta &= \eta^\beta + \overline{\eta}^\beta  \\
		\end{split}&&
		\begin{split}
			i\colon \R^{r|s}&\to \R^{r|s}\\
			i^\#x^b &= -x^b \\
			i^\#\eta^\beta &= -\eta^\beta \\
		\end{split}&&
		\begin{split}
			e\colon \R^{0|0}&\to \R^{r|s} \\
			e^\#x^b &= 0 \\
			e^\#\eta^\beta &= 0 \\
		\end{split}
	\end{align}
	Here \((\overline{x}^b, \overline{\eta}^\beta)\) denote the coordinates on the second factor \(\R^{r|s}\).
	If we denote the \(C\)-points of \(\R^{r|s}\) by \((p^a, p^\alpha)\in{\left(\cO_C\right)}_0^r\oplus{\left(\cO_C\right)}_1^s\) we see that the additive group structure on \(\R^{r|s}\) corresponds precisely to the addition of its \(C\)-points in the vector space \(\underline{\R^{r|s}}(C)\).

	Let now \(L\in\Mat_{\cO_B}(r|s\times m|n)\) be an even matrix with entries in \(\cO_B\).
	With respect to the standard basis we express \(L\) as the block matrix
	\begin{equation}
		L = \left(\tensor[_C]{L}{^B}\right) =
		\begin{pmatrix}
			\tensor[_c]{L}{^b} & \tensor[_c]{L}{^\beta} \\
			\tensor[_\gamma]{L}{^b} & \tensor[_\gamma]{L}{^\beta} \\
		\end{pmatrix},
	\end{equation}
	where the entries \(\tensor[_c]{L}{^b}\) and \(\tensor[_\gamma]{L}{^\beta}\) are even entries and \(\tensor[_c]{L}{^\beta}\) and \(\tensor[_\gamma]{L}{^b}\) are odd entries.
	The supermanifold \(\R^{m|n}\times B\) over \(B\) can be equipped with an action \(a_L\colon \left(\R^{r|s}\times B\right)\times_B\left(\R^{m|n}\times B\right)\to \R^{m|n}\times B\) parametrized by \(B\) of the additive super Lie group \(\R^{r|s}\times B\) over \(B\).
	In coordinates \((y^b, \theta^\beta)\) on \(\R^{m|n}\) the action \(a_L\) is given by.
	\begin{equation}
		\begin{split}
			a_L^\# y^b &= y^b + X^R\tensor[_R]{L}{^b}
			= y^b + x^c\tensor[_c]{L}{^b} + \eta^\beta\tensor[_\gamma]{L}{^b}\\
			a_L^\#\theta^\beta &= \theta^\beta + X^R\tensor[_R]{L}{^\beta}
			= \theta^\beta + x^c\tensor[_c]{L}{^\beta} + \eta^\beta\tensor[_\gamma]{L}{^\beta}\\
		\end{split}
	\end{equation}
	This action corresponds to the translation of the \(C\)-point \((p^a, p^\alpha)\in\underline{\R^{m|n}}(C)\) by the vector \(Q\cdot L\) for the \(C\)-point \(Q=(q^b, q^\beta)\in\underline{\R^{r|s}}(C)\).
\end{ex}
\begin{ex}[Action of \(\Z_2\) on \(\R^{m|n}\)]\label{ex:Z2ActionsOnRmn}
	We define two different actions of \(\Z_2\) on \(\R^{m|n}\) on the level of \(C\)-points by
	\begin{align}
		\begin{split}
			a_1\colon \Z_2\times \underline{\R^{m|n}}(C) &\to\underline{\R^{m|n}}(C) \\
			(z, (p^a, p^\alpha)) &\mapsto \left({\left(-1\right)}^z p^a, p^\alpha\right)\\
		\end{split} &&
		\begin{split}
			a_2\colon \Z_2\times \underline{\R^{m|n}}(C) &\to\underline{\R^{m|n}}(C) \\
			(z, (p^a, p^\alpha)) &\mapsto \left(p^a, {\left(-1\right)}^z p^\alpha\right)\\
		\end{split}
	\end{align}
	We call the first action \(a_1\) reflection of the even directions at the origin and the second action \(a_2\) reflection of the odd directions at the origin.
	Note that the reflection of the odd directions at the origin can be defined for any supermanifold over \(\R^{0|0}\).
\end{ex}

In order to formulate the first quotient theorem, we need to give the supergeometric generalization of certain properties of actions of super Lie groups:
Let \(a\colon G\times M\to M\) be an action of the super Lie group \(G\) on the supermanifold \(M\).
\begin{itemize}
	\item
		We say that \(a\) is a \emph{proper} group action if the reduction \(\underline{a}(\R^{0|0}_B)\colon \underline{G}(\R^{0|0}_B)\times \underline{M}(\R^{0|0}_B)\to \underline{M}(\R^{0|0})\) is proper.
	\item
		The action \(a\) is \emph{free} if for all \(p\in\underline{M}(C_B)\) the equality \(\underline{a}(C_B)(g, p)=p\) implies \(g=e\).
	\item
		A map \(f\colon M\to N\) between supermanifolds is called \emph{\(G\)-invariant}, if \(f\circ a = f\circ p_M\).
	\item
		Let \(\gamma\colon G\to H\) a homomorphism of super Lie groups, that is \(m_H\circ \gamma\times \gamma = \gamma \circ m_G\) and suppose the super Lie group \(H\) acts on a supermanifold \(N\) via \(a_N\colon H\times N\to N\).
		The map \(f\colon M\to N\) is called \emph{\(\gamma\)-equivariant} if \(a_N\circ \gamma\times f = f\circ a\).
		In the case \(\gamma=\id_G\), we say that \(f\) is \(G\)-equivariant.
\end{itemize}

The following Proposition~\ref{prop:QuotientByFreeProperGroupAction} shows that the quotient by a free and proper action of a super Lie group on a supermanifold is again a supermanifold.
It has been proved in~\cite[Theorem~2.12]{AH-IBIHS} but we will nevertheless give a proof in order to introduce notation and concepts used in Section~\ref{sec:Superorbifolds}.
\begin{prop}\label{prop:QuotientByFreeProperGroupAction}
	Let \(G\) act on \(M\) freely and properly.
	Then there exists supermanifold~\(\faktor{M}{G}\) and a \(G\)-invariant submersion \(q\colon M\to \faktor{M}{G}\) such that any \(G\)-invariant map \(f\colon M\to N\) factors over \(q\).
	That is, there exists a map \(\overline{f}\colon \faktor{M}{G}\to N\) such that \(f=\overline{f}\circ q\).
	The supermanifold~\(\faktor{M}{G}\) and the map~\(q\) are unique up to superdiffeomorphism.
\end{prop}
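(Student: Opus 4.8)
The plan is to reduce the proposition to the supergeometric Godement theorem, Proposition~\ref{prop:QuotientByRegularEquivalenceRelation}, by realising the graph of the action as a regular equivalence relation. Concretely, I would consider the map
\[
	\psi\colon G\times_B M\to M\times_B M,\qquad (g,m)\mapsto\bigl(a(g,m),\,m\bigr),
\]
which on \(C_B\)-points is \((g,p)\mapsto(\underline a(C_B)(g,p),p)\), and let \(j\colon R\to M\times_B M\) be (the inclusion of) its image. The data making \(R\) an equivalence relation in the sense of Definition~\ref{defn:EquivalenceRelation} all come from the group structure: under \(\psi\) the section \(\delta\colon M\to R\) is \(m\mapsto(e,m)\), so \(j\circ\delta=\Delta\); the symmetry \(i\colon R\to R\) is induced by the swap of \(M\times_B M\), which preserves \(R\) because \((m,a(g,m))=\bigl(a(i_G(g),a(g,m)),a(g,m)\bigr)\); and the composition \(c\colon R\times_{\overline{p}_2,\overline{p}_1}R\to R\) corresponds to \(\bigl((g_1,m_1),(g_2,m_2)\bigr)\mapsto(m_G(g_1,g_2),m_2)\) on the fibre product, where \(m_1=a(g_2,m_2)\). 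Checking that these fit the commuting squares of Definition~\ref{defn:EquivalenceRelation} and satisfy \(i\circ i=\id\) is a diagram chase with the associativity of \(m_G\) and the action axioms \(a\circ(m_G\times\id)=a\circ(\id\times a)\) and \(a(e,-)=\id\); I would not spell it out.

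The substantive step is that \(R\) is a \emph{regular} equivalence relation, equivalently that \(\psi\) is a closed embedding with \(\overline{p}_1=p_1\circ j\) a submersion. I would argue as follows. Freeness makes \(\underline\psi(C_B)\) injective for every \(C_B\), so \(\psi\) is a monomorphism. By equivariance the orbit maps \(a(-,p)\colon G\to M\) have constant rank, and by freeness their fibres are single \(B\)-points — the isotropy \(H_p\) is trivial — so the orbit maps are immersions; since \(d\psi_{(g,m)}(\xi,v)=\bigl(da_{(g,m)}(\xi,v),v\bigr)\), this forces \(d\psi\) to be injective everywhere. Finally, the reduction \(\underline\psi(\R^{0|0}_B)\) is, on reduced points, the map \((g,m)\mapsto(a(g,m),m)\), which is proper exactly because the action is proper; hence, by the supergeometric form of the statement that a proper injective immersion is a closed embedding, \(R:=\im\psi\) is a closed subsupermanifold of \(M\times_B M\) and \(\psi\colon G\times_B M\to R\) is a superdiffeomorphism. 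Under this identification \(\overline{p}_1\) corresponds to \(a\colon G\times_B M\to M\), which is a submersion because \(a=p_M\circ\Phi\) for the automorphism \(\Phi(g,m)=(g,a(g,m))\) of \(G\times_B M\). Thus \(R\) is regular. This closed-embedding claim — which is exactly where the hypotheses \emph{free} (a condition on all \(C_B\)-points, odd directions included) and \emph{proper} (a condition only on the reduction) get used — is the main obstacle; everything before and after it is formal.

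It then remains to apply Proposition~\ref{prop:QuotientByRegularEquivalenceRelation} and to translate between \(R\)-invariance and \(G\)-invariance. The theorem yields a submersion \(q\colon M\to\faktor{M}{R}\), which we rename \(\faktor{M}{G}\), together with a superdiffeomorphism \(R\cong M\times_{\faktor{M}{G}}M\). Since \(q\circ\overline{p}_1=q\circ\overline{p}_2\) and \(\overline{p}_1,\overline{p}_2\) correspond under \(\psi\) to \(a\) and \(p_M\), we get \(q\circ a=q\circ p_M\), i.e.\ \(q\) is \(G\)-invariant. Conversely, a \(G\)-invariant \(f\colon M\to N\) satisfies \(f\circ a=f\circ p_M\), hence \(f\circ\overline{p}_1=f\circ\overline{p}_2\), so it factors uniquely as \(f=\overline f\circ q\) by the universal property of \(\faktor{M}{R}\). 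Uniqueness of \(\faktor{M}{G}\) up to superdiffeomorphism is inherited from Definition~\ref{defn:QuotientEquivalenceRelation}, and from \(\dim R=\dim G+\dim M\) together with \eqref{eq:DimensionQuotientEquivalenceRelation} one reads off \(\dim\faktor{M}{G}=\dim M-\dim G\).
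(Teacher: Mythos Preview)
Your proposal is correct and follows essentially the same approach as the paper: both reduce to the supergeometric Godement theorem by showing that the graph of the action defines a regular equivalence relation on \(M\), with the only cosmetic difference that you order the factors as \((a,p_M)\) while the paper uses \((p_M,a)\). As a consequence you must argue that \(a\) is a submersion to verify regularity, whereas the paper gets \(\overline{p}_1=p_M\) as a submersion for free; otherwise the verifications of the equivalence-relation axioms, the closed-embedding step from freeness and properness, and the translation between \(G\)-invariance and \(R\)-invariance match.
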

\begin{proof}
	We define a map \(j\colon R\to M\times M\), where \(R=G\times M\) by setting \(j=(p_M, a)\).
	The proof proceeds by showing that \(j\) defines a regular equivalence and that the quotient~\(\faktor{M}{R}\) satisfies the universal property.

	To see that \(j\) gives an equivalence relation notice first that \(j\) is indeed the embedding of a subsupermanifold because the action is free.
	By the definition of \(j\) the first projection \(\overline{p}_1=p_1\circ j\colon R\to M\) coincides with the projection \(p_M\colon G\times M\to M\) on the first factors and \(\overline{p}_2=p_2\circ j\colon R\to M\) coincides with the group action \(a\colon G\times M\to M\).
	Then \(\delta=(e, \id_M)\colon M\to R=G\times M\) satisfies \(j\circ \delta=\Delta\) and hence condition~\ref{item:defn:EquivalenceRelation:Diagonal} of Definition~\ref{defn:EquivalenceRelation}.

	Condition~\ref{item:defn:EquivalenceRelation:Associativity} reduces to the associativity of the action as follows:
	One checks that the fiber product \(R\times_{\overline{p}_2, \overline{p}_1} R\) is isomorphic to \(G\times G\times M\), where the projections are given by
	\begin{align}
		\pi_1= t_{23}\colon G\times G\times M &\to R=G\times M, &
		\pi_2= \id_G\times a\colon G\times G\times M &\to R=G\times M.
	\end{align}
	where \(t_{23}\colon G\times G\times M\to G\times M\) is the projection on second and third factor.
	With this characterization of \(R\times_{\overline{p}_2, \overline{p}_1} R\) we define
	\begin{equation}
		c=m\times \id_M\colon R\times_{\overline{p}_2, \overline{p}_1} R = G\times G\times M\to G\times M = R
	\end{equation}
	and verify
	\begin{align}
		\overline{p}_1\circ c &= \overline{p}_1 \circ \pi_1, \\
		\overline{p}_2\circ c &= a\circ \left(m\times \id_M\right) = a\circ \left(\id_G\times a\right) = \overline{p}_2 \circ \pi_2.
	\end{align}
	While the first equation is straightforward, the second uses the compatibility of the action \(a\) with the multiplication \(m\) of the super Lie group \(G\).
	Both equations together show the left-hand diagram in~\ref{item:defn:EquivalenceRelation:Associativity} of Definition~\ref{defn:EquivalenceRelation}.
	The diagram on the right-hand side of condition~\ref{item:defn:EquivalenceRelation:Associativity} can analogously be reduced to associativity properties of the action using the identification \(R\times_{\overline{p}_2,\overline{p}_1} R\times_{\overline{p}_2,\overline{p}_1} R = G\times G\times G\times M\).

	The inverse map \(i\colon G\to G\) gives a map \(\sigma\colon R\to R\) by setting \(\sigma=(i\circ p_G, a)\) which satisfies \(\sigma\circ \sigma=\id_R\) and \(\overline{p}_1\circ \sigma = \overline{p}_2\).
	Consequently, \(\sigma\) fulfills the symmetry condition~\ref{item:defn:EquivalenceRelation:Symmetry} of Definition~\ref{defn:EquivalenceRelation}.

	The properness of the group action implies that \(j\) gives a closed subsupermanifold and the first projection \(\overline{p}_1\) is obviously a submersion.
	Consequently, the equivalence relation~\(j\colon R\to M\times M\) is a regular equivalence relation and the quotient \(\faktor{M}{R}\) exists by Proposition~\ref{prop:QuotientByRegularEquivalenceRelation}.

	By definition of the quotient with respect to an equivalence relation, see Definition~\ref{defn:QuotientEquivalenceRelation}, the quotient map \(q\colon M\to \faktor{M}{R}\) is a submersion satisfying
	\begin{equation}
		q\circ p_M = q\circ\overline{p}_1 = q\circ\overline{p}_2 = q \circ a,
	\end{equation}
	that is, \(q\) is \(G\)-invariant.
	It remains to show that the quotient \(q\colon M\to \faktor{M}{R}\) fulfills the universal property of the quotient by the group action:
	If \(f\colon M\to N\) is a \(G\)-invariant map, by definition it satisfies
	\begin{equation}
		f\circ \overline{p}_1= f\circ p_M = f\circ a = f\circ \overline{p}_2.
	\end{equation}
	Consequently, there exists a map \(\overline{f}\colon \faktor{M}{R}\to N\) such that \(f=\overline{f}\circ q\).
\end{proof}
\begin{rem}
	The dimension formula~\eqref{eq:DimensionQuotientEquivalenceRelation} for the quotient \(\faktor{M}{R}\) implies that for both even and odd dimension
	\begin{equation}
		\dim\faktor{M}{G}
		= 2\dim M - \left(\dim M + \dim G\right)
		= \dim M - \dim G.
	\end{equation}
\end{rem}

In the previous Proposition~\ref{prop:QuotientByFreeProperGroupAction} the condition that the action is free is strictly necessary.
Indeed, as can be seen in the Example~\ref{ex:Z2ActionsOnRmn}, the quotient of a supermanifold by a non-free action cannot be a supermanifold:
The quotient of the reduced action of \(a_1\) of Example~\ref{ex:Z2ActionsOnRmn} would be an orbifold instead of a manifold and in the case of the action~\(a_2\) a similar phenomenon appears for the odd directions.

The first step to understand non-free group actions is to define the stabilizer or isotropy group.
The isotropy group \(H_p\) of a \(B\)-point \(p\in\underline{M}(\id_B)\) is given by the functor
\begin{equation}
	\begin{split}
		\underline{H_p}\colon \cat{SPoint}_B^{op}&\to \cat{Set} \\
		C_B&\mapsto \Set{g\in\underline{G}(C_B)\given \underline{a}(C_B)(g, p)=p}
	\end{split}
\end{equation}
In the case of \(B=\R^{0|0}\), that is of an action of a super Lie group over \(\R^{0|0}\) on a supermanifold over \(\R^{0|0}\), it is proved in~\cite{BCC-SGS} that \(H_p\) is a super Lie subgroup of~\(G\).
If \(B\) has odd directions, \(H_p\) is not always a super Lie group, as the following example shows:
\begin{ex}\label{ex:ActionTranslationNoLieSubGroup}
	Let \(\lambda\in{\left(\cO_B\right)}_1\) be an odd element and define the action \(a_\lambda\colon \left(\R^{0|1}\times B\right)\times_B\left(\R^{1|0}\times B\right)\to\R^{1|0}\) as in Example~\ref{ex:ActionTranslation} by
	\begin{equation}
		a_\lambda^\# y = y + \eta\lambda.
	\end{equation}
	The isotropy group \(H_0\) at \(p=0\) is given by
	\begin{equation}
		\begin{split}
			\underline{H_0}(C_B)
			&= \Set{g\in\underline{\left(\R^{0|1}\times B\right)}(C_B)\given \underline{a_\lambda}(C_B)(g, 0) = g\cdot\left(C_B^*\lambda\right)= 0} \\
			&= \Set{g\in {\left(\cO_C\right)}_1\given g\cdot\left(C_B^*\lambda\right)=0}.
		\end{split}
	\end{equation}
	That is the \(C_B\)-points of \(H_0\) are those odd elements of \({\left(\cO_C\right)}_1\) that annihilate \(C_B^*\lambda\).
	While \(H_0\) is a subgroup of the group \(\R^{0|1}\times B\) it is not a subsupermanifold of \(\R^{0|1}\times B\).
\end{ex}

In order to prove that \(H_p\) is a super Lie subgroup one uses the constant rank theorem.
For a discussion of the constant rank theorem in supergeometry, we refer to~\cite[Chapter~5.2]{CCF-MFS}.
The constant rank theorem is applied to \(a_p\colon G\to M\), the map obtained from \(a\) by fixing the second factor to \(p\).
Its tangent at the identity gives a map  \(\underline{\tangent[e]{a_p}}\colon \underline{\LieAlg{g}}\to\underline{\VSec[p]{\tangent{M}}}\).
The linear map \(\underline{\tangent[e]{a_p}}\) is said to have rank \(r|s\) if we can pick a basis of \(\underline{\LieAlg{g}}(\id_B)\) and a basis of \(\underline{\VSec[p]{\tangent{M}}}(\id_B)\) such that with respect to this basis \(\underline{\tangent[e]{a_p}}(\id_B)\) is given by
\begin{equation}
	\begin{pmatrix}
		\id_r & 0 & 0 & 0 \\
		0 & 0 & 0 & 0 \\
		0 & 0 & \id_s & 0\\
		0 & 0 & 0 & 0\\
	\end{pmatrix}.
\end{equation}
Not every matrix with entries in \(\cO_B\) has a rank.
For example the differential of the restriction of \(a_\lambda\) from Example~\ref{ex:ActionTranslationNoLieSubGroup} to zero is given by multiplication with the odd \(\lambda\).
As \(\lambda\) is not invertible, the differential cannot be brought in standard form.
However, with the assumption of constant rank, the proof of~\cite[Proposition~5.2]{BCC-SGS} generalizes to families over \(B\) and yields:
\begin{lemma}\label{lemma:IsotropyGroupIsSuperLieGroup}
	Let \(a\colon G\times_B M\to M\) be the action of a super Lie group \(G\) over \(B\) on the supermanifold \(M\) over \(B\) and \(p\in \underline{M}(B)\) a \(B\)-point.
	The following are equivalent
	\begin{itemize}
		\item
			\(H_p\) is a super Lie subgroup of \(G\).
		\item
			The map \(\underline{\tangent[e]{a_p}}\colon \underline{\LieAlg{g}}\to\underline{\VSec[p]{\tangent{M}}}\) has a rank.
		\item
			The kernel of \(\underline{\tangent[e]{a_p}}(\id_B)\) is a free submodule of \(\underline{\LieAlg{g}}(\id_B)\).
		\item
			The image of \(\underline{\tangent[e]{a_p}}(\id_B)\) is a free submodule of \(\underline{\VSec[p]{\tangent{M}}}(\id_B)\).
	\end{itemize}
\end{lemma}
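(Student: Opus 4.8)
The statement separates into two parts that I would establish independently and then combine: the equivalences $(2)\Leftrightarrow(3)\Leftrightarrow(4)$ are pure module theory over $\cO_B$ and do not see the group action, whereas the equivalence of these with $(1)$ carries the geometric content. For the first part, recall that over the local ring $\cO_B$ a finitely generated projective module is free and a free submodule of a free module splits off as a direct summand. Granting this, $(2)\Leftrightarrow(3)\Leftrightarrow(4)$ is routine: writing $L=\underline{\tangent[e]{a_p}}(\id_B)$ for the even $\cO_B$-linear map between the free modules $\underline{\LieAlg g}(\id_B)$ and $\underline{\VSec[p]{\tangent M}}(\id_B)$, if $L$ has the block normal form displayed in the statement one reads off directly that $\ker L$ and $\im L$ are free; conversely, if $\ker L$ is free it is a direct summand, its complement maps isomorphically onto $\im L$, so $\im L$ is free hence also a direct summand, and adapting bases to these two splittings puts $L$ into block normal form, and similarly with $\im L$ free in place of $\ker L$. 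Note that over $B=\R^{0|0}$ the ring $\cO_B=\R$ is a field, so every such $L$ automatically has a rank; this is why the hypothesis is invisible in the classical theory and in \cite{BCC-SGS}.

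For $(2)\Rightarrow(1)$ I would adapt the proof of \cite[Proposition~5.2]{BCC-SGS} to families over $B$. Apply the constant rank theorem to $a_p\colon G\to M$, the map obtained from $a$ by fixing the second argument to $p$. Left-equivariance $a_p\circ\ell_g=a_g\circ a_p$, with $\ell_g$ left translation on $G$ and $a_g\colon M\to M$ the action of the invertible element $g$, shows that at any $B$-point $g$ of $G$ the tangent map $\tangent[g]{a_p}$ differs from $\tangent[e]{a_p}$ by pre- and post-composition with isomorphisms of free modules; hence the hypothesis that $\tangent[e]{a_p}$ has rank $r|s$ upgrades to the assertion that the bundle morphism $\tangent{a_p}$ over $G$ has constant rank $r|s$ relative to $B$. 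The relative constant rank theorem, see \cite[Chapter~5.2]{CCF-MFS}, then presents $a_p$ near $e$ as a standard projection, so $H_p=a_p^{-1}(p)$ is a closed subsupermanifold of $G$ of codimension $r|s$. Finally the structure maps $m$, $i$, $e$ of $G$ restrict to $H_p$: on $C_B$-points this holds by the very definition of $\underline{H_p}$ as a subgroup of $\underline{G}$, and it lifts to morphisms of supermanifolds because $H_p\hookrightarrow G$ is an embedding. Thus $H_p$ is a super Lie subgroup.

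For the converse $(1)\Rightarrow(2)$ I would use the quotient theorem from the previous section. If $H_p$ is a super Lie subgroup it is a closed subsupermanifold of $G$, so right translation of $H_p$ on $G$ is free and proper — the latter because the reduction of $H_p$ is a closed subgroup of the reduction of $G$ — and Proposition~\ref{prop:QuotientByFreeProperGroupAction} produces the quotient supermanifold $\faktor{G}{H_p}$ with a submersion $q\colon G\to\faktor{G}{H_p}$ whose fibre through $e$ is $H_p$; in particular $\ker\tangent[e]{q}=\LieAlg{h}_p$ is a free direct summand of $\underline{\LieAlg g}(\id_B)$. Since $a_p$ is invariant under this right action it descends to $\bar a_p\colon\faktor{G}{H_p}\to M$ with $a_p=\bar a_p\circ q$, and $\bar a_p$ is $G$-equivariant and injective on all $C_B$-points, since $\underline{a}(C_B)(g,p)=\underline{a}(C_B)(g',p)$ forces $g^{-1}g'\in\underline{H_p}(C_B)$. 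As in the classical theory $\bar a_p$ is then an immersion, so $\tangent[e]{a_p}=\tangent[eH_p]{\bar a_p}\circ\tangent[e]{q}$ is the composite of a split epimorphism with free kernel and a split monomorphism with free image; such a composite admits a block normal form, so $\tangent[e]{a_p}$ has a rank. Combining the two implications closes the cycle and proves the lemma.

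\emph{The main obstacle} is the constant rank step inside $(2)\Rightarrow(1)$, and in particular carrying it out relative to $B$: one must check that the equivariance identity propagates the rank not merely fibrewise over $\R^{0|0}$ but as a genuine statement about the morphism of vector bundles $\tangent{a_p}$ over the family $G\to B$, and then invoke the constant rank theorem in its relative form. A secondary subtlety is the module theory over $\cO_B$, where the presence of nilpotents makes the identification of a free submodule with a free direct summand a point that needs justification; this is exactly where the rank hypothesis excludes the pathology of Example~\ref{ex:ActionTranslationNoLieSubGroup}, in which $\tangent[e]{a_p}$ is multiplication by a nilpotent odd element and therefore has no rank. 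One should also confirm, without assuming that the ambient $G$-action is proper, that the orbit map $\bar a_p$ in the last step is an immersion.
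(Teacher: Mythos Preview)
Your approach matches the paper's exactly: the paper gives no proof beyond the remark that ``with the assumption of constant rank, the proof of \cite[Proposition~5.2]{BCC-SGS} generalizes to families over \(B\)'', having already set up that the constant rank theorem is to be applied to \(a_p\colon G\to M\). Your sketch is a correct and considerably more detailed fleshing-out of this outline; the one step you assert without justification---that a free submodule of a free \(\cO_B\)-module is automatically a direct summand---does hold here because the maximal ideal of \(\cO_B\) is nilpotent, so an injective map of free modules stays injective after reduction to \(\R\) and therefore splits.
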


\begin{rem}
	Note that if \(B=\R^{0|0}\) the differential~\(\underline{\tangent[e]{a_p}}(\id_{\R^{0|0}})\) is a matrix with entries in \(\R\) and consequently its rank is always defined.
	This is the reason that the proof that the isotropy group is a super Lie group in~\cite{BCC-SGS} does not require the rank hypothesis.

	We want to reiterate that every statement in this article is functorial under base change.
	At the example of Lemma~\ref{lemma:IsotropyGroupIsSuperLieGroup} this yields:
	Suppose that the isotropy group \(H_p\) of the action \(a\colon G\times_B M\to M\) of a super Lie group \(G\) over \(B\) on a supermanifold \(M\) over \(B\) at the \(B\)-point \(p\colon B\to M\) is a super Lie group over \(B\).
	For any map \(b\colon B'\to B\), the action \(b^*a\colon b^*G\times_{B'} b^*M\to b^*M\) is an action of the super Lie group \(b^*G\) over \(B'\) on the supermanifold \(b^*M\) over \(B'\) and \(b^*p\colon B'\to b^*M\) is a \(B'\)-point of \(b^*M\).
	Then also the isotropy group \(H_{b^*p}\) of \(b^*a\) at \(b^*p\) is a super Lie group (over \(B'\)).
	Most of the time we do not write out this functoriality and drop the explicit \(b^*\) from the notation.
	Instead we assume that \(B\) can be changed as necessary.
\end{rem}

The next step in the study of non-free actions is to investigate the structure of a supermanifold with an action of a super Lie group around a \(B\)-point \(p\) such that the isotropy group \(H_p\) is a super Lie group.
We will show that one can find a local slice for the group action around every \(B\)-point, that is, a submanifold transversal to the orbits and invariant under the action of the isotropy group.
The corresponding classical results go back to~\cites{K-SCGTL}{P-ESANCLG}.
For a modern textbook see~\cite[Chapter~3]{AB-LGGAIA}.

\begin{defn}\label{defn:Slice}
	Let \(p\colon B\to M\) be a \(B\)-point of the supermanifold \(M\) with \(G\)-action~\(a\) and denote by \(H_p\) the isotropy group at \(p\).
	A slice at \(p\) is an embedded submanifold \(S\) in \(M\) such that
	\begin{enumerate}
		\item\label{item:DefnSlice:pinS}
			\(p\in\underline{S}(B)\).
		\item\label{item:DefnSlice:Transversal}
			\(\underline{\VSec[p]{\tangent{M}}}=\left(\underline{\tangent[e]{a_p}}\right)\underline{\LieAlg{g}}\oplus\underline{\VSec[p]{\tangent{S}}}\)
		\item\label{item:DefnSlice:InvarianceHp}
			\(S\) is invariant under \(H_p\).
			That is for all \(C_B\colon C\to B\), \(q\in\underline{S}(C_B)\) and \(g\in\underline{H_p}(C_B)\) then \(\underline{a}(C_B)(g, q)\in\underline{H_p}(C_B)\).
		\item\label{item:DefnSlice:SingleOrbits}
			If \(q\in\underline{S}(C_B)\) and \(g\in\underline{G}\) such that \(\underline{a}(C_B)(g, q)\in \underline{S}(C_B)\) then \(g\in\underline{H_p}(C_B)\).
	\end{enumerate}
\end{defn}

Notice that the condition~\ref{item:DefnSlice:Transversal} requires that \(\left(\underline{\tangent[e]{a_p}}\right)\underline{\LieAlg{g}}\) has a complement that is free and hence needs to be free itself.
It follows that the existence of a slice at \(p\) requires that \(H_p\) is a super Lie group.
The even and odd dimension of the slice \(S\) at \(p\) can be calculated as
\begin{equation}
	\dim S = \dim M - \dim G + \dim H_p.
\end{equation}

\ExistenceOfSlice{}
\begin{proof}
	We generalize the proof of~\cite[Theorem~3.49]{AB-LGGAIA}.
	Let \(\underline{N}\) be the \(m\)-orthogonal complement of \(\left(\underline{\tangent[e]{a_p}}\right)\underline{\LieAlg{g}}\) in \(\underline{\VSec[p]{\tangent{M}}}\).
	Pick an open subfunctor \(\left.\underline{N}\right|_U\) of~\(\underline{N}\) and define the submanifold
	\begin{equation}
		\underline{S} = \exp_p \underline{N}|_U \subset \underline{M}
	\end{equation}
	as a candidate for the slice.
	Here the open subset \(U\subset\underline{N}(\R^{0|0})\) is chosen such that \(\exp_p\) is defined on \(\underline{N}|_U\) but might need to be restricted further in the remainder of the argument.

	By definition the submanifold \(S\subset M\) satisfies the conditions~\ref{item:DefnSlice:pinS} and~\ref{item:DefnSlice:Transversal} of Definition~\ref{defn:Slice}.
	To see that \(S\) satisfies also condition~\ref{item:DefnSlice:InvarianceHp} remark that \(H_p\) acts via isometries on \(M\) and isometries map geodesics to geodesics.

	It remains to prove that \(S\) satisfies condition~\ref{item:DefnSlice:SingleOrbits} when \(U\) is chosen small enough.
	We will argue by contradiction and use the properness of the action.
	Suppose that there is no \(U\subset\underline{N}(\R^{0|0})\) such that \(\underline{S}=\exp_p \underline{N}|_U\) satisfies condition~\ref{item:DefnSlice:SingleOrbits}.
	Then for every \(C_B\colon C\to B\) we can find sequences \(p_n\in\underline{S}(C_B)\) and \(g_n\in\underline{G}(C_B)\) such that
	\begin{itemize}
		\item
			\(g_n\not\in\underline{H_p}(C_B)\)
		\item
			\(\underline{a}(C_B)(g_n, p_n)\in\underline{S}(C_B)\)
		\item
			the reduced points \(\overline{g}_n=r^*g_n\in\underline{G}(\R^{0|0}_B)\) and \(\overline{p}_n=r^*x_n\in\underline{S}(\R^{0|0}_B)\) where \(r\colon \R^{0|0}_B\to C_B\) converge to
			\begin{align}
				\lim_{n\to\infty} \overline{p}_n &=  \overline{p} &
				\lim_{n\to\infty} \underline{a}(\R^{0|0}_B)(\overline{g}_n, \overline{p}_n) = \overline{p}
			\end{align}
	\end{itemize}
	Now as \(a\) is a proper group action, \(\overline{g}_n\) converges to an element \(h\in\underline{H}(\R^{0|0}_B)\).
	By multiplying the sequence \(g_n\) by \(h^{-1}\) we can assume that \(\overline{g}_n\) converges to \(\underline{e}(\R^{0|0}_B)\).
	By condition~\ref{item:DefnSlice:Transversal} the restriction of the action \(a\colon G\times M\to M\) to \(\tilde{a}\colon G\times S\to M\) is submersive at \((e, p)\).
	Consequently, up to shrinking \(U\) there is a subsupermanifold \(V\subset G\) through \(e\) and an open subsupermanifold \(M_p\subset M\) containing \(p\) such that \(a\colon V\times S\to M_0\) is a diffeomorphism and \(\tangent[e]{G}=\LieAlg{h}\oplus \tangent[e]{V}\).
	Here \(\LieAlg{h}\) is the Lie algebra of \(H_p\).
	Furthermore, again by implicit function theorem, the multiplication in \(G\) induces a local diffeomorphism between \(V\times H_p\) and \(G\).
	Consequently, for \(n\) sufficiently large, every \(g_n\) can be written uniquely as \(v_n\cdot h_n\) for some \(h_n\in\underline{H_p}(C_B)\) and \(v_n\in\underline{V}(C_B)\) and \(v_n\neq e\).
	It follows that \(\underline{a}(C_B)(h_n, p_n)\in \underline{S}(C_B)\) and hence
	\begin{equation}
		\underline{a}(C_B)(v_n, \underline{a}(C_B)(h_n, p_n)) \not\in \underline{S}(C_B)
	\end{equation}
	because of the diffeomorphism \(V\times S\to M_0\).
	But this contradicts \(\underline{a}(C_B)(g_n, p_n)\in\underline{S}(C_B)\) and it follows that~\ref{item:DefnSlice:SingleOrbits} must be satisfied.
\end{proof}

\begin{cor}
	Let \(M\) be a supermanifold over \(B\) with an even number of odd dimensions, \(a\colon G\times_{B} M\to M\) a proper group action and \(p\colon B\to M\) a \(B\)-point of \(M\) such that \(H_p\) is a finite group.
	Then there exists a slice at \(p\).
\end{cor}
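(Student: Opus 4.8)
The plan is to deduce the corollary directly from Theorem~\ref{thm:ExistenceOfSlice} by verifying its two hypotheses for a suitably chosen metric: that \(H_p\) is a super Lie subgroup of \(G\), and that \(H_p\) acts on \(M\) by isometries. The first is immediate from finiteness. Since \(B\) is a superpoint, a finite group functor \(\underline{H_p}\) that is a subgroup of \(\underline{G}\) is represented by a \(0|0\)-dimensional closed subsupermanifold of \(G\), namely a finite disjoint union \(H_p\cong\coprod_{i=1}^{N}B\) of copies of \(B\); such a subsupermanifold is automatically a super Lie subgroup. (Equivalently, the Lie algebra of \(H_p\) vanishes, so the kernel of \(\underline{\tangent[e]{a_p}}(\id_B)\) is the zero module, which is free, and Lemma~\ref{lemma:IsotropyGroupIsSuperLieGroup} applies.)

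For the second hypothesis I would produce a globally \(H_p\)-invariant Riemannian metric by averaging. Since \(M\) has an even number of odd dimensions it admits some Riemannian metric \(m_0\). Writing \(\underline{H_p}(\id_B)=\Set{h_1,\dotsc,h_N}\) and letting \(a_{h_i}\colon M\to M\) be the automorphism induced by the action of \(h_i\), I set \(m=\tfrac1N\sum_{i=1}^{N}(m_0)_{a_{h_i}}\). Each summand is again an even, symmetric, non-degenerate \(\cO_M\)-bilinear form whose reduction is positive definite, and these properties survive the convex combination because the reduction of \(m\) is an average of positive definite metrics; hence \(m\) is a Riemannian metric on \(M\). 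The usual reindexing argument, using that \(a_{h_i}\circ a_{h_j}=a_{h_ih_j}\) and that right multiplication by \(h_j\) permutes \(\Set{h_1,\dotsc,h_N}\), gives \((m)_{a_{h_j}}=m\) for every \(j\), so each \(a_{h_j}\) is an isometry. Because \(B\) is a superpoint, every \(g\in\underline{H_p}(C_B)\) for an arbitrary \(C_B\colon C\to B\) is the base change of one of the \(h_i\), and \(\underline{a}(C_B)(g,-)\) is the corresponding base change of \(a_{h_i}\); therefore \(H_p\) acts on all of \(M\) by isometries with respect to \(m\). Theorem~\ref{thm:ExistenceOfSlice} then applies and yields a slice at \(p\).

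I expect the only genuine point to watch is this averaging step: one must make sure the averaged form is still a Riemannian metric \emph{relative to} \(B\) and that invariance holds functorially in \(C_B\), not merely at \(\id_B\). Both are handled by the observation that, over the superpoint \(B\), the finite group \(H_p\) is the constant group \(\coprod_{i=1}^{N}B\), so there are only finitely many automorphisms \(a_{h_i}\) to average over and every \(C_B\)-point of \(H_p\) is pulled back from these. Everything else is a routine transcription of the classical averaging argument, together with the dimension bookkeeping already recorded after Definition~\ref{defn:Slice}.
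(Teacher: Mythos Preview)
Your proof is correct and follows essentially the same approach as the paper: observe that a finite \(H_p\) is automatically a super Lie subgroup, average an arbitrary Riemannian metric over \(H_p\) to obtain an \(H_p\)-invariant one, and then invoke Theorem~\ref{thm:ExistenceOfSlice}. You spell out the functoriality in \(C_B\) and the reason the averaged form remains a Riemannian metric in more detail than the paper does, but the argument is the same.
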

\begin{proof}
	Any finite isotropy group \(H_p\) is a super Lie subgroup of \(G\) and we can construct an \(H_p\)-invariant metric by averaging over \(H_p\) as follows:
	Choose an arbitrary Riemannian metric \(\tilde{m}\) on \(M\) and for \(h\in H_p\) denote by \(\tilde{m}_h\) the metric obtained by pullback along the diffeomorphism induced by \(h\).
	Then
	\begin{equation}
		m = \sum_{h\in H_p} \tilde{m}_h
	\end{equation}
	is an \(H_p\)-invariant metric.
	Hence, we can apply Theorem~\ref{thm:ExistenceOfSlice}.
\end{proof}

For any closed Lie subgroup \(H\) in the super Lie group \(G\) the quotient \(\faktor{G}{H}\) with respect to the right action of \(H\) on \(G\) exists.
Moreover, it is proved in~\cite[Proposition~3.6]{AH-IBIHS} that the quotient map \(q\colon G\to\faktor{G}{H}\) is a principal bundle with typical fiber \(H\).
For further information on principal bundles on supermanifolds we also refer to~\cite[Chapter~6]{EK-SGSRSSCA}.
The slice constructed in Theorem~\ref{thm:ExistenceOfSlice} carries an \(H_p\) action.
Consequently, one can form the associated bundle \(G\times_{H_p} S\) by replacing the fibers of the \(H_p\)-principal bundle \(G\to \faktor{G}{H_p}\) by \(S\), see~\cite[Section~6.2]{EK-SGSRSSCA}.
Alternatively, in~\cite[Section~3.2]{AH-IBIHS}, the associated bundle is described as
\begin{equation}
	G\times_{H_p} S = \faktor{\left(G\times S\right)}{H_p}
\end{equation}
where \(H_p\) acts freely on \(G\times S\) such that \(h\in H_p\) maps on \(C_B\)-points as follows:
\begin{equation}\label{eq:AssociatedBundleAsQuotient}
	\begin{split}
		h\colon \underline{G}(C_B)\times \underline{S}(C_B) &\to \underline{G}(C_B)\times \underline{S}(C_B) \\
		(g, s) &\mapsto (gh^{-1}, hs)
	\end{split}
\end{equation}
With this characterization we can prove the following supergeometric analogue of the tubular neighborhood theorem:

\begin{prop}\label{prop:SliceFoliation}
	Let \(p\colon B\to M\) be a \(B\)-point of a supermanifold \(M\) over \(B\) with group action \(a\colon G\times_B M\to M\) such that the isotropy group \(H_p\) is a super Lie subgroup and there is a slice \(S\) around \(p\).
	Then there exists an open neighborhood \(T\) of \(p\) in \(M\) and a \(G\)-equivariant diffeomorphism \(G\times_{H_p} S\to T\), where \(S\) is the slice at \(p\).
\end{prop}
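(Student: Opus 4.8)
The plan is to exhibit the desired map as the one induced by the action itself. First I would define a functor morphism $\psi\colon G\times_{H_p}S\to M$ on $C_B$-points by $[(g,s)]\mapsto\underline{a}(C_B)(g,s)$. Using the presentation~\eqref{eq:AssociatedBundleAsQuotient} of $G\times_{H_p}S$ as the free quotient $\faktor{(G\times S)}{H_p}$, this is well defined, since $\underline{a}(C_B)(gh^{-1},\underline{a}(C_B)(h,s))=\underline{a}(C_B)(g,s)$ for $h\in\underline{H_p}(C_B)$; naturality in $C_B$ is immediate, and $G$-equivariance is the associativity of the action, $\psi\bigl(g\cdot[(g',s)]\bigr)=\underline{a}(C_B)(gg',s)=\underline{a}(C_B)\bigl(g,\psi[(g',s)]\bigr)$. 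By the reconstruction of supermanifolds from their point functors this defines a $G$-equivariant morphism of supermanifolds $\psi$.

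Next I would show that $\psi$ is a local diffeomorphism at the class $[(e,p)]$. As $G\to\faktor{G}{H_p}$ is an $H_p$-principal bundle by~\cite[Proposition~3.6]{AH-IBIHS} and $G\times_{H_p}S\to\faktor{G}{H_p}$ is the associated bundle with fibre $S$, the tangent module at $[(e,p)]$ is naturally $\bigl(\underline{\LieAlg{g}}/\underline{\LieAlg{h}}\bigr)\oplus\underline{\VSec[p]{\tangent{S}}}$ with $\underline{\LieAlg{h}}=\ker\underline{\tangent[e]{a_p}}$, and $\underline{\differential{\psi}}$ maps it into $\underline{\VSec[p]{\tangent{M}}}$ by $\underline{\tangent[e]{a_p}}$ on the first summand (injectively, since $\underline{\LieAlg{h}}$ is its kernel) and by the inclusion $\underline{\VSec[p]{\tangent{S}}}\hookrightarrow\underline{\VSec[p]{\tangent{M}}}$ on the second. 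Condition~\ref{item:DefnSlice:Transversal} of Definition~\ref{defn:Slice} asserts precisely that $\underline{\VSec[p]{\tangent{M}}}=(\underline{\tangent[e]{a_p}})\underline{\LieAlg{g}}\oplus\underline{\VSec[p]{\tangent{S}}}$, so $\underline{\differential{\psi}}$ is an isomorphism at $[(e,p)]$, and the supergeometric inverse function theorem~\cite[Proposition~6.3.1]{M-IDCSM} makes $\psi$ an isomorphism of an open neighbourhood of $[(e,p)]$ onto an open neighbourhood of $p$. Since $\psi$ is $G$-equivariant it is a local diffeomorphism along the whole $G$-orbit of $[(e,p)]$, and, étaleness being an open condition on the reduced space, after replacing $S$ by a smaller $H_p$-invariant slice around $p$ we may assume $\underline{\differential{\psi}}$ is invertible everywhere on $G\times_{H_p}S$.

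It remains to prove injectivity of $\psi$ on $C_B$-points, and here condition~\ref{item:DefnSlice:SingleOrbits} of Definition~\ref{defn:Slice} is the essential input: if $\psi[(g_1,s_1)]=\psi[(g_2,s_2)]$ in $\underline{M}(C_B)$ then $\underline{a}(C_B)(g_2^{-1}g_1,s_1)=s_2\in\underline{S}(C_B)$, hence $h\coloneqq g_2^{-1}g_1\in\underline{H_p}(C_B)$ by~\ref{item:DefnSlice:SingleOrbits}, so $g_1=g_2h$ and $s_2=\underline{a}(C_B)(h,s_1)$, which by~\eqref{eq:AssociatedBundleAsQuotient} means $[(g_1,s_1)]=[(g_2,s_2)]$. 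Thus $\psi$ is an injective local diffeomorphism; being étale it is open on reduced spaces, so $T\coloneqq\im\psi\subset M$ is an open subsupermanifold containing $p=\psi[(e,p)]$ and $\psi\colon G\times_{H_p}S\to T$ is the asserted $G$-equivariant diffeomorphism. Note that properness of $a$, which entered only in producing a slice in Theorem~\ref{thm:ExistenceOfSlice}, is not needed here.

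I expect the main obstacle to be the bookkeeping in the second step: identifying the tangent module of $G\times_{H_p}S$ at $[(e,p)]$ compatibly with the splitting in Definition~\ref{defn:Slice} (via the principal-bundle structure of $G\to\faktor{G}{H_p}$), and arranging the shrinking of the slice $H_p$-equivariantly while preserving all conditions of Definition~\ref{defn:Slice} — for the slices produced by Theorem~\ref{thm:ExistenceOfSlice}, which have the form $\exp_p(\underline{N}|_U)$, this amounts to shrinking $U$ to a smaller $H_p$-invariant neighbourhood of the origin. Well-definedness, $G$-equivariance and injectivity, by contrast, are formal consequences of the slice axioms and~\eqref{eq:AssociatedBundleAsQuotient}.
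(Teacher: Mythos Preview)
Your proof is correct and follows essentially the same overall architecture as the paper's: both build the map from the restricted action \(G\times S\to M\), observe it descends to \(G\times_{H_p}S\), and then argue it is a \(G\)-equivariant diffeomorphism onto an open neighbourhood of \(p\).

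The genuine difference lies in how injectivity is obtained. The paper invokes the classical tubular neighbourhood theorem (\cite[Theorem~3.57]{AB-LGGAIA}) to conclude that the \emph{reduction} of \(\tilde a\) is a diffeomorphism onto its image, and then checks that \(\differential{\tilde a}\) is bijective everywhere (via surjectivity of \(\differential{a}\) at all \((e,s)\) and a dimension count); an étale map of supermanifolds whose reduction is a bijection is a superdiffeomorphism. You instead prove injectivity on \(C_B\)-points directly from slice axiom~\ref{item:DefnSlice:SingleOrbits}, which is cleaner and self-contained: it makes transparent exactly where each axiom in Definition~\ref{defn:Slice} is used, and avoids appealing to the classical result. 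The price is the explicit shrinking step to propagate invertibility of \(\differential{\psi}\) from \([(e,p)]\) to all of \(G\times_{H_p}S\), which the paper finesses by citing that \(\differential{a}\) is already surjective at every \((e,s)\) from the construction in Theorem~\ref{thm:ExistenceOfSlice}. Your caveat about arranging the shrinking \(H_p\)-equivariantly is well placed; for the slices of Theorem~\ref{thm:ExistenceOfSlice} it is indeed just shrinking \(U\) to an \(H_p\)-invariant neighbourhood of \(0\).
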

\begin{proof}
	Let \(a|_S\colon G\times S\to M\) be the restriction of the action \(a\colon G\times M\to M\) to \(S\).
	The map~\(a|_S\) is \(G\)-equivariant with respect to the \(G\)-action on \(G\times S\) consisting of multiplication from the left and the \(H_p\)-invariant with respect to the action described in~\eqref{eq:AssociatedBundleAsQuotient}.
	Consequently, there is a well defined \(G\)-equivariant map \(\tilde{a}\colon G\times_{H_p} S\to M\).
	By the classical tubular neighborhood theorem, see~\cite[Theorem~3.57]{AB-LGGAIA}, its reduction is a diffeomorphism onto its image \(T\subset \underline{M}(\R^{0|0})\).

	It remains to show that \(\tilde{a}\colon G\times_{H_p} S\to M|_T\) is a superdiffeomorphism.
	We will do so by showing that \(\differential{\tilde{a}}\) is bijective everywhere.
	But as the dimension of \(G\times_{H_p} S\) and \(M\) agree, it is sufficient to show surjectivity of \(\differential{\tilde{a}}\) which in turn follows from surjectivity of \(\differential{a}\) because the map \(G\times S\to G\times_{H_p} S\) is submersive.
	We have already seen in the proof of Theorem~\ref{thm:ExistenceOfSlice} that \(\differential{a}\) is surjective at \((e, s)\) for all \(s\in\underline{S}(B)\).
	As \(a(g, s)=a(g\cdot e, s)\) and left-multiplication by \(g\) is a diffeomorphism of \(G\), the result follows.
\end{proof}

\subsection{Superorbifolds}\label{sec:Superorbifolds}
In this section we introduce superorbifolds as a generalization of supermanifolds such that quotients of supermanifolds by proper group actions with finite isotropy groups can be defined.
The concept of orbifold goes back to~\cite{S-GNM} where orbifolds are defined as topological spaces that are locally isomorphic to quotients of \(\R^m\) by finite group actions.
As topological spaces are not sufficient to describe supermanifolds we will use here a reformulation in terms of equivalence classes of groupoids, see~\cite{M-OAGI}.

The following definition is adapted from~\cite{NLAB-InternalCategory}, but super Lie groupoids have appeared before in~\cite{AHW-SO}.
\begin{defn}\label{defn:SuperLieGroupoid}
	A super Lie groupoid \(\groupoid{G}\) consists of two supermanifolds \(\groupoid{G}_0\) and \(\groupoid{G}_1\) together with
	\begin{itemize}
		\item
			a submersive source morphism \(s\colon \groupoid{G}_1\to \groupoid{G}_0\),
		\item
			a submersive target morphism \(t\colon \groupoid{G}_1\to \groupoid{G}_0\),
		\item
			\(e\colon \groupoid{G}_0\to \groupoid{G}_1\) called identity assigning morphism, such that source and target of the identity are given by the identity map:
			\begin{equation}
				\begin{tikzpicture}[commutative diagrams/.cd, every diagram, baseline=(current bounding box.center)]
					\matrix[dr](m){
						& \groupoid{G}_1 &\\
						\groupoid{G}_0 && \groupoid{G}_0\\
					} ;
					\path[pf]{
						(m-2-1) edge node[auto]{\(e\)} (m-1-2)
						edge node[auto]{\(\id_{\groupoid{G}_0}\)} (m-2-3)
						(m-1-2) edge node[auto]{\(s\)}(m-2-3)
					};
				\end{tikzpicture}
				\hspace{5em}
				\begin{tikzpicture}[commutative diagrams/.cd, every diagram, baseline=(current bounding box.center)]
					\matrix[dr](m){
						& \groupoid{G}_1 &\\
						\groupoid{G}_0 && \groupoid{G}_0\\
					} ;
					\path[pf]{
						(m-2-1) edge node[auto]{\(e\)} (m-1-2)
						edge node[auto]{\(\id_{\groupoid{G}_0}\)} (m-2-3)
						(m-1-2) edge node[auto]{\(t\)}(m-2-3)
					};
				\end{tikzpicture}
			\end{equation}
		\item
			a composition morphism \(c\colon \groupoid{G}_1\times_{t,s} \groupoid{G}_1\to \groupoid{G}_1\) that satisfies
			\begin{itemize}
				\item
					that the source and target of the composite are given by the source of the first factor and the target of the second:
					\begin{equation}
						\begin{tikzpicture}[commutative diagrams/.cd, every diagram, baseline=(current bounding box.center)]
							\matrix[mat](m){
								\groupoid{G}_1\times_{t,s} \groupoid{G}_1 & \groupoid{G}_1 \\
								\groupoid{G}_1 & \groupoid{G}_0\\
							} ;
							\path[pf]{
								(m-1-1) edge node[auto]{\(c\)} (m-1-2)
								edge node[auto]{\(p_1\)} (m-2-1)
								(m-1-2) edge node[auto]{\(s\)}(m-2-2)
								(m-2-1) edge node[auto]{\(s\)}(m-2-2)
							};
						\end{tikzpicture}
						\hspace{5em}
						\begin{tikzpicture}[commutative diagrams/.cd, every diagram, baseline=(current bounding box.center)]
							\matrix[mat](m){
								\groupoid{G}_1\times_{t,s} \groupoid{G}_1 & \groupoid{G}_1 \\
								\groupoid{G}_1 & \groupoid{G}_0\\
							} ;
							\path[pf]{
								(m-1-1) edge node[auto]{\(c\)} (m-1-2)
								edge node[auto]{\(p_2\)} (m-2-1)
								(m-1-2) edge node[auto]{\(t\)}(m-2-2)
								(m-2-1) edge node[auto]{\(t\)}(m-2-2)
							};
						\end{tikzpicture}
					\end{equation}
				\item
					an associativity law for iterated composition:
					\begin{diag}
						\matrix[mat](m){
							\groupoid{G}_1\times_{t,s} \groupoid{G}_1 \times_{t,s} \groupoid{G}_1 & \groupoid{G}_1\times_{t,s} \groupoid{G}_1 \\
							\groupoid{G}_1\times_{t,s} \groupoid{G}_1 & \groupoid{G}_1\\
						} ;
						\path[pf]{
							(m-1-1) edge node[auto]{\(c\times_{t,s}\id_{\groupoid{G}_1}\)} (m-1-2)
							edge node[auto]{\(\id_{\groupoid{G}_1}\times_{t,s} c\)} (m-2-1)
							(m-1-2) edge node[auto]{\(c\)}(m-2-2)
							(m-2-1) edge node[auto]{\(c\)}(m-2-2)
						};
					\end{diag}
				\item
					unit laws for composition with the identity from the left and the right:
					\begin{diag}
						\matrix[mat,column sep=huge](m){
							\groupoid{G}_0\times_{\id_{\groupoid{G}_0},s} \groupoid{G}_1 & \groupoid{G}_1\times_{t,s} \groupoid{G}_1 & \groupoid{G}_1\times_{t,\id_{\groupoid{G}_0}} \groupoid{G}_0 \\
							& \groupoid{G}_1 &\\
						} ;
						\path[pf]{
							(m-1-1) edge node[auto]{\(e\times_{\id_{\groupoid{G}_0},s}\id_{\groupoid{G}_1}\)} (m-1-2)
								edge node[auto, swap]{\(p_2\)} (m-2-2)
							(m-1-2) edge node[auto]{\(c\)}(m-2-2)
							(m-1-3) edge node[auto, swap]{\(\id_{\groupoid{G}_1}\times_{t,\id_{\groupoid{G}_0}} e\)} (m-1-2)
								edge node[auto]{\(p_1\)} (m-2-2)
						};
					\end{diag}
			\end{itemize}
		\item
			an inverse morphism \(i\colon \groupoid{G}_1\to \groupoid{G}_1\) that
			\begin{itemize}
				\item
					flips the source and target morphisms:
					\begin{equation}
						\begin{tikzpicture}[commutative diagrams/.cd, every diagram, baseline=(current bounding box.center)]
							\matrix[dr](m){
								\groupoid{G}_1 && \groupoid{G}_1\\
								& \groupoid{G}_0 &\\
							} ;
							\path[pf]{
								(m-1-1) edge node[auto]{\(i\)} (m-1-3)
								edge node[auto, swap]{\(s\)} (m-2-2)
								(m-1-3) edge node[auto]{\(t\)}(m-2-2)
							};
						\end{tikzpicture}
						\hspace{5em}
						\begin{tikzpicture}[commutative diagrams/.cd, every diagram, baseline=(current bounding box.center)]
							\matrix[dr](m){
								\groupoid{G}_1 && \groupoid{G}_1\\
								& \groupoid{G}_0 &\\
							} ;
							\path[pf]{
								(m-1-1) edge node[auto]{\(i\)} (m-1-3)
								edge node[auto, swap]{\(t\)} (m-2-2)
								(m-1-3) edge node[auto]{\(s\)}(m-2-2)
							};
						\end{tikzpicture}
					\end{equation}
				\item
					and acts as a left- and right-inverse to the composition:
					\begin{equation}
						\begin{tikzpicture}[commutative diagrams/.cd, every diagram, baseline=(current bounding box.center)]
							\matrix[mat](m){
								\groupoid{G}_1 & \groupoid{G}_1 \times_{t,s} \groupoid{G}_1\\
								\groupoid{G}_0 & \groupoid{G}_1\\
							} ;
							\path[pf]{
								(m-1-1) edge node[auto]{\((i,\id_{\groupoid{G}_1})\)} (m-1-2)
									edge node[auto]{\(t\)} (m-2-1)
								(m-1-2) edge node[auto]{\(c\)}(m-2-2)
								(m-2-1) edge node[auto]{\(e\)}(m-2-2)
							};
						\end{tikzpicture}
						\hspace{5em}
						\begin{tikzpicture}[commutative diagrams/.cd, every diagram, baseline=(current bounding box.center)]
							\matrix[mat](m){
								\groupoid{G}_1 & \groupoid{G}_1 \times_{t,s} \groupoid{G}_1\\
								\groupoid{G}_0 & \groupoid{G}_1\\
							} ;
							\path[pf]{
								(m-1-1) edge node[auto]{\((\id_{\groupoid{G}_1},i)\)} (m-1-2)
									edge node[auto]{\(s\)} (m-2-1)
								(m-1-2) edge node[auto]{\(c\)}(m-2-2)
								(m-2-1) edge node[auto]{\(e\)}(m-2-2)
							};
						\end{tikzpicture}
					\end{equation}
			\end{itemize}
	\end{itemize}
\end{defn}
\begin{ex}[translation groupoid]
	The prototypical example of a super Lie groupoid is the translation groupoid.
	Assume that the super Lie group \(G\) acts on the supermanifold~\(M\) via \(a\colon G\times M\to M\).
	Then we define the super Lie groupoid \(G\ltimes M\) by setting \({\left(G\ltimes M\right)}_0 = M\) and \({\left(G\ltimes M\right)}_1= G\times M\) where the source map \(s=p_M\colon G\times M\to M\) coincides with the projection on \(M\) and the target map coincides with the action \(t=a\colon G\times M\to M\).
	Both source and target maps are submersions.

	If we denote the identity and inverse of the super Lie group as
	\begin{align}
		e_G\colon B&\to G &
		i_G\colon G\to G
	\end{align}
	we define the identity assigning morphism and inversem morphism of the groupoid as
	\begin{align}
		e=(e_G, \id_M) \colon M&\to G\times M &
		i=(i_G\circ p_G, a) \colon G\times M\to G\times M
	\end{align}
	where \(p_G\colon G\times M\to G\) is the projection on \(G\).

In order to define the composition morphism, we identify \({\left(G\ltimes M\right)}_1\times_{t,s} {\left(G\ltimes M\right)}_1\) with \(G\times G\times M\) where the projections are given by \(p_1=t_{23}\) and \(p_2=\id_G\times a\), like in the proof of Proposition~\ref{prop:QuotientByFreeProperGroupAction}.
	The composition map is then given in terms of the multiplication \(m_G\colon G\times G\to G\) by
	\begin{equation}
		c= m\times\id_M\colon G\times G\times M\to G\times M.
	\end{equation}
	The verification of the commutative diagrams of the definition of super Lie groupoid reduces to properties of the group action and has in part been done in the proof of Proposition~\ref{prop:QuotientByFreeProperGroupAction}.
\end{ex}
\begin{ex}[supermanifolds as super Lie groupoids]
	Any supermanifold gives rise to the identity groupoid \(\groupoid{G}\) over \(M\).
	Set \(\groupoid{G}_1=\groupoid{G}_0=M\) and let the source and target maps be the identity, that is, \(s=t=\id_M\).
	Also the identity morphism, composition and the inverse map are given by the identity.
	This can be seen as a special case of the translation groupoid when one considers the trivial action of the trivial group on \(M\).
\end{ex}
\begin{ex}[super Lie groups as groupoids]
	Any super Lie group \(G\) can be seen as a super Lie groupoid with \(\groupoid{G}_0=\R^{0|0}\) and \(\groupoid{G}_1=G\).
	The identity, composition and inverse of the group then yield the corresponding maps of the groupoid.
	This is another special case of the translation groupoid with \(M=\R^{0|0}\).
\end{ex}
\begin{ex}[regular equivalence relation groupoid and pair groupoid]
	Let \(R\subset M\times M\) be a regular equivalence relation as in Definition~\ref{defn:EquivalenceRelation} with projections~\(\overline{p}_1\) and \(\overline{p}_2\).
	We can define a groupoid \(\groupoid{R}\) by setting \(\groupoid{R}_0=M\) and \(\groupoid{R}_1=R\), \(s=\overline{p}_1\), \(t=\overline{p}_2\) and using the same composition map for the groupoid as for the equivalence relation.
	The symmetry condition of the equivalence relation yields the inverse map.

	A special case is the trivial equivalence relation \(R=M\times M\).
	In this case the resulting groupoid is called the pair groupoid.
\end{ex}
\begin{rem}
	The usual definition of a groupoid, see for example~\cite[Section~1.1]{M-OAGI}, defines a groupoid as a set of objects \(\groupoid{G}_0\) and a set of arrows \(\groupoid{G}_1\) together with maps as in Definition~\ref{defn:SuperLieGroupoid}.
	The arrows are thought as originating and ending on points of \(\groupoid{G}_0\) and need to be invertible.
	A Lie groupoid is then defined as a groupoid with additional manifold structure and requiring maps to be smooth.

	As supermanifolds are not sufficiently described as a set of points with additional structure, our definition of super Lie groupoid describes a groupoid internal to the category of supermanifolds.
	For details on groupoids internal to a category see~\cite{NLAB-InternalCategory}.
	We need the even stronger condition that the source and target maps are submersions so that all fiber products necessary for Definition~\ref{defn:SuperLieGroupoid} exist.
\end{rem}
\begin{defn}
	A super Lie groupoid \(\groupoid{G}\) is called proper if the reduction of \((s,t)\colon \groupoid{G}_1 \to \groupoid{G}_0\times \groupoid{G}_0\) is a proper map of topological spaces.
	A super Lie groupoid \(\groupoid{G}\) is called étale if the source and target maps \(s\), \(t\colon \groupoid{G}_1\to \groupoid{G}_0\) are local diffeomorphisms.
\end{defn}
\begin{ex}
	Let \(a\colon G\times M\to M\) be a group action.
	The translation groupoid \(G\ltimes M\) is a proper groupoid if the group action is proper.
	If \(G\) is a discrete group, the translation groupoid is étale.
	If \(G\) is a finite group, the translation groupoid is proper and étale.
\end{ex}
\begin{defn}\label{defn:LieGroupoidHomomorphism}
	A homomorphism \(F\colon \groupoid{G}\to \groupoid{H}\) between super Lie groupoids \(\groupoid{G}\) and \(\groupoid{H}\) consists of two smooth maps between supermanifolds \(F_0\colon \groupoid{G}_0\to \groupoid{H}_0\) and \(F_1\colon \groupoid{G}_1\to \groupoid{H}_1\) such that the following commutative diagram commutes:
	\begin{diag}\label{diag:HomomorphismSuperLieGroupoid}
		\matrix[mat](m){
			\groupoid{G}_1 & \groupoid{H}_1\\
			\groupoid{G}_0\times \groupoid{G}_0 & \groupoid{H}_0\times \groupoid{H}_0\\
		} ;
		\path[pf]{
			(m-1-1) edge node[auto]{\(F_1\)} (m-1-2)
				edge node[auto]{\((s,t)\)} (m-2-1)
				(m-1-2) edge node[auto]{\((s,t)\)}(m-2-2)
			(m-2-1) edge node[auto]{\(F_0\times F_0\)}(m-2-2)
		};
	\end{diag}
	The composition \(F'\circ F\) of homomorphisms of super Lie groupoids \(F\colon\groupoid{G}\to \groupoid{H}\) with \(F'\colon \groupoid{H}\to\groupoid{K}\) is defined by \({\left(F'\circ F\right)}_0=F'_0\circ F_0\) and \({\left(F'\circ F\right)}_1=F'_1\circ F_1\).

	A weak equivalence between super Lie groupoids is a homomorphism \(F\colon \groupoid{G}\to \groupoid{H}\) of super Lie groupoids such that in addition
	\begin{enumerate}
		\item\label{item:defn:LieGroupoidEquivalence:SurjectiveSubmersion}
			the map
			\begin{diag}
				\matrix[seq](m){
					\groupoid{H}_1\times_{s,F_0} \groupoid{G}_0 & \groupoid{H}_1 & \groupoid{H}_0\\
				} ;
				\path[pf]{
					(m-1-1) edge node[auto]{\(p_1\)} (m-1-2)
					(m-1-2) edge node[auto]{\(t\)}(m-1-3)
				};
			\end{diag}
			is a surjective submersion.
		\item\label{item:defn:LieGroupoidEquivalence:FiberProduct}
			the square~\eqref{diag:HomomorphismSuperLieGroupoid} is a fiber product, that is, the induced map
			\begin{equation}
				\groupoid{G}_1 \to \groupoid{H}_1\times_{(s,t),F_0\times F_0} \left(\groupoid{G}_0\times \groupoid{G}_0\right)
			\end{equation}
			is a superdiffeomorphism.
	\end{enumerate}
\end{defn}
\begin{ex}
	Let \(a_M\colon G\times M\to M\) and \(a_N\colon H\times N\to N\) be actions of the super Lie groups \(G\) and \(H\) on \(M\) and \(N\) respectively.
	A homomorphism of super Lie groups \(\rho\colon G\to H\) together with a \(\rho\)-equivariant map \(f\colon M\to N\) yield a homomorphism \(F\colon G\ltimes M\to H\ltimes N\) by setting \(F_0=0\) and \(F_1=\rho\times f\).
\end{ex}
\begin{ex}[Homomorphism between translation groupoid and equivalence relation groupoid]\label{ex:HomomorphismTranslationGroupoidEquivalenceRelationGroupoid}
	Let \(a\colon G\times M\to M\) be a proper free group action and let \(R\) be the equivalence relation constructed in the proof of Proposition~\ref{prop:QuotientByFreeProperGroupAction} by setting \(j=(p_M, a)\colon G\times M\to M\times M\).
	Then the map \(j\) induces a homomorphism \(F\colon G\ltimes M\to \groupoid{R}\) from the translation groupoid to the regular equivalence relation groupoid.
	Here \(F_0=\id_M\colon M\to M\) and \(F_1=j\colon G\times M\to R\).
\end{ex}
\begin{ex}[Weak equivalence between the regular equivalence relation groupoid and the identity groupoid over its quotient]\label{ex:EquivalenceRelationGroupoidAndQuotient}
	Let \(R\) be a regular equivalence relation on \(M\) with quotient \(q\colon M\to Q\), \(\groupoid{R}\) the equivalence relation groupoid and \(\groupoid{Q}\) the identity groupoid of \(Q\).
	The homomorphism \(F\colon\groupoid{R}\to \groupoid{Q}\) be given by \(F_0=q\colon \groupoid{R}_0=M\to Q=\groupoid{Q}_0\) and \(F_1=q\circ\overline{p}_1\colon \groupoid{R}_1=R\to Q=\groupoid{Q}_1\) is a weak equivalence.
	Indeed,
	\begin{diag}
		\matrix[mat, column sep=large](m){
			\groupoid{Q}_1\times_{s, F_0} \groupoid{R}_0 = Q\times_{\id_Q, j} M = M & \groupoid{Q}_1=Q & \groupoid{Q}_0=Q\\
		} ;
		\path[pf]{
			(m-1-1) edge node[auto]{\(p_1=q\)} (m-1-2)
			(m-1-2) edge node[auto]{\(t=\id_Q\)}(m-1-3)
		};
	\end{diag}
	is a surjective submersion, showing~\ref{item:defn:LieGroupoidEquivalence:SurjectiveSubmersion} of Definition~\ref{defn:LieGroupoidHomomorphism}.
	In order to show~\ref{item:defn:LieGroupoidEquivalence:FiberProduct}, we show that \(\groupoid{R}_1=R\) fulfills the universal property of the fiber product.
	That is, we need to show that for any two maps \(f\) and \((g_1, g_2)\) from the supermanifold \(N\) satisfying the following commutative diagram
	\begin{diag}
		\matrix[mat, column sep=large](m){
			N & & \\
			& \groupoid{R}_1=R & \groupoid{Q}_1=Q \\
			& \groupoid{R}_0\times\groupoid{R}_0=M\times M & \groupoid{Q}_0\times\groupoid{Q}_0=Q\times Q \\
		};
		\path[pf]{
			(m-1-1) edge[bend left=20] node[auto]{\(f\)} (m-2-3)
				edge[bend right=20] node[auto, swap]{\((g_1, g_2)\)} (m-3-2)
				edge[dotted] node[auto]{\((f, (g_1,g_2))\)} (m-2-2)
			(m-2-2) edge node[auto]{\(F_1=q\circ\overline{p}_1\)} (m-2-3)
				edge node[auto]{\((s,t)=(\overline{p}_1,\overline{p}_2)\)} (m-3-2)
			(m-2-3) edge node[auto]{\((s,t)=(\id_Q, \id_Q)\)} (m-3-3)
			(m-3-2) edge node[auto]{\(F_0\times F_0=q\times q\)} (m-3-3)
		};
	\end{diag}
	there exists a map \((f,(g_1, g_2))\) making the above diagram commutative.
	One can read of that \((g_1, g_2)\) can be seen as a map taking values in \(M\times_Q M=R\) and satisfies the conditions.
	Hence, \(F\) is a weak equivalence.
\end{ex}
\begin{ex}\label{ex:TranslationGroupoidAndQuotient}
	Let \(a\colon G\times M\to M\) be a free proper group action with quotient \(q\colon M\to \faktor{M}{G}\) and denote the identity groupoid over the quotient \(\faktor{M}{G}\) by \(\groupoid{Q}\).
	We construct a weak equivalence \(F\colon \groupoid{G}\to\groupoid{Q}\) by \(F_0\colon \groupoid{G}_0=M\to\groupoid{Q}_0=Q\) and \(F_1\colon q\circ p_M\colon \groupoid{G}_1\to \groupoid{Q}_1=Q\).
	The proof that \(F\) is a weak equivalence proceeds analogously to Example~\ref{ex:EquivalenceRelationGroupoidAndQuotient} and is omitted here.
	Alternatively, one can show that the homomorphism between the translation groupoid and the regular equivalence relation groupoid in Example~\ref{ex:HomomorphismTranslationGroupoidEquivalenceRelationGroupoid} is an equivalence and use associativity of the equivalence relation.
\end{ex}
\begin{lemma}\label{lemma:FiniteIsotropyActionProperEtaleGroupoid}
	Let \(G\) be a super Lie group that acts properly and with finite isotropy groups on a supermanifold \(M\) of dimension \(m|2n\).
	There exists a proper étale groupoid~\(\groupoid{G}\) and a weak equivalence \(F\colon \groupoid{G}\to G\ltimes M\).
\end{lemma}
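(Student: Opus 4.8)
The plan is to realise $\groupoid{G}$ as the restriction of the translation groupoid $G\ltimes M$ along a map $\phi\colon U\to M$ out of a disjoint union $U$ of slices, chosen so that $\phi$ becomes ``essentially surjective'' in the groupoid sense. First I would build the cover: for every point $\bar q\in\Red{M}$ choose a $B$-point $p$ with $\Red{p}=\bar q$; its isotropy group $H_p$ is finite by hypothesis, so the corollary to Theorem~\ref{thm:ExistenceOfSlice} furnishes a slice $S_p$ at $p$, and Proposition~\ref{prop:SliceFoliation} identifies the $G$-saturation $T_p:=G\cdot S_p$ with $G\times_{H_p}S_p$; in particular $T_p$ is an open, $G$-invariant sub-supermanifold of $M$ containing the orbit of $p$. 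Since $\{\Red{T_p}\}$ is an open cover of the (second countable) space $\Red{M}$, I extract a countable subcover, indexed by $I$, with slices $S_i$, and set $U=\bigsqcup_{i\in I}S_i$ with the evident immersion $\phi\colon U\to M$, an embedding on each component. Because $\bigcup_i T_i=M$, the relative action map $b\colon G\times_B U\to M$, $(g,u)\mapsto a(g,\phi(u))$, is surjective; and it is a submersion, since on each component $G\times_B S_i$ it coincides with the restriction $a|_{S_i}$ of the action, whose differential is surjective at $(e,s)$ for all $s\in\underline{S_i}(B)$ — this is exactly condition~\ref{item:DefnSlice:Transversal} of Definition~\ref{defn:Slice}, already exploited in the proofs of Theorem~\ref{thm:ExistenceOfSlice} and Proposition~\ref{prop:SliceFoliation} — hence surjective everywhere by left $G$-equivariance.

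Next I would define $\groupoid{G}$ by $\groupoid{G}_0=U$ and
\begin{equation}
	\groupoid{G}_1 = (G\times_B U)\times_{b,\phi} U \;\cong\; \Set{(g,u,u')\in G\times_B U\times_B U\given a(g,\phi(u))=\phi(u')},
\end{equation}
which exists as a supermanifold because $b$ is a submersion. It inherits the groupoid structure from $G\ltimes M$: source $s(g,u,u')=u$, target $t(g,u,u')=u'$, unit $e(u)=(e_G,u,u)$, inverse $i(g,u,u')=(i_G(g),u',u)$ and composition $c\bigl((g',u',u''),(g,u,u')\bigr)=(m_G(g',g),u,u'')$; the axioms of Definition~\ref{defn:SuperLieGroupoid} reduce to those of $G\ltimes M$. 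The target $t$ is a submersion, being the fibre-product projection of $\groupoid{G}_1$ onto the factor $U$ over the submersion $b$, and $s=t\circ i$ is then a submersion as well, so $\groupoid{G}$ is a super Lie groupoid. It is étale: the dimension formula for fibre products gives $\dim\groupoid{G}_1=\dim G+2\dim U-\dim M$, while the slice dimension formula together with $\dim H_p=0|0$ gives $\dim U=\dim S_i=\dim M-\dim G$, whence $\dim\groupoid{G}_1=\dim M-\dim G=\dim U$ and the submersions $s,t$ are local diffeomorphisms. It is proper: for a compact $K\subset\Red{U}\times\Red{U}$ the images of its two projections have compact $\phi$-image in $\Red{M}$, properness of $\Red{a}$ confines the occurring $g$'s to a compact subset of $\Red{G}$, and $\Red{(s,t)}^{-1}(K)$ is a closed subset of that compact product, hence compact.

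Finally I would verify that $F\colon\groupoid{G}\to G\ltimes M$ with $F_0=\phi$ and $F_1(g,u,u')=(g,\phi(u))$ is a weak equivalence; the square~\eqref{diag:HomomorphismSuperLieGroupoid} commutes by inspection. Condition~\ref{item:defn:LieGroupoidEquivalence:SurjectiveSubmersion} of Definition~\ref{defn:LieGroupoidHomomorphism} is precisely the statement that $b\colon G\times_B U\to M$ is a surjective submersion, once one identifies $(G\ltimes M)_1\times_{s,F_0}\groupoid{G}_0\cong G\times_B U$; and condition~\ref{item:defn:LieGroupoidEquivalence:FiberProduct} holds because $\groupoid{G}_1$ was built exactly as the fibre product $(G\times M)\times_{(s,t),\phi\times\phi}(U\times U)$.

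The main obstacle is the middle step: setting up a slice cover compatible with the base $B$ and, above all, showing that the total action map $b$ out of the disjoint union of slices is a genuine submersion. This is where the transversality built into the notion of slice, propagated by $G$-equivariance as in Proposition~\ref{prop:SliceFoliation}, is indispensable; it is also the point at which finiteness of the isotropy — forcing $\dim S_i=\dim M-\dim G$ — is what makes $\groupoid{G}$ come out étale and not merely proper.
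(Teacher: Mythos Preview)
Your proposal is correct and follows essentially the same approach as the paper: the paper also sets \(\groupoid{G}_0=\coprod_\alpha S_\alpha\) for a family of slices whose \(G\)-saturations cover \(M\), defines \(\groupoid{G}_1\) as the fibre product \((G\ltimes M)_1\times_{(s,t),F_0\times F_0}(\groupoid{G}_0\times\groupoid{G}_0)\) (which is your \((G\times_B U)\times_{b,\phi}U\) under the obvious identification), and then deduces the étale property by the same dimension count. You supply more detail than the paper does---in particular the explicit verification that \(b\) is a surjective submersion and the properness argument, both of which the paper leaves implicit---but the strategy is the same.
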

\begin{proof}
	Choose a set of \(B\)-points \(p_\alpha\colon B\to M\), \(\alpha\in A\) with isotropy groups \(H_{p_\alpha}\) and slices \(S_\alpha\) for the action such that \(G\times_{H_{p_\alpha}} S_\alpha\) covers \(M\).
	We define a groupoid \(\groupoid{G}\) with \(\groupoid{G}_0=\coprod_{\alpha\in A} S_\alpha\).
	The inclusions \(S_\alpha\to M\) yield a map \(F_0\colon \groupoid{G}_0\to M\).
	Setting \(\groupoid{G}_1={\left(G\ltimes M\right)}_1\times_{(s,t), F_0\times F_0}\left(\groupoid{G}_0\times\groupoid{G}_0\right)\) and \(F_1\colon \groupoid{G}_1\to {\left(G\ltimes M\right)}_1\) to be the projection on the first factor we obtain by construction an equivalence of groupoids \(F\colon\groupoid{G}\to G\ltimes M\).

	In order to show that \(\groupoid{G}\) is a proper, étale groupoid, we note that the dimension of \(\groupoid{G}_1={\left(G\ltimes M\right)}_1\times_{(s,t), F_0\times F_0}\left(\groupoid{G}_0\times\groupoid{G}_0\right)\) coincides with the dimension of the slices.
	As the source and target maps are surjective submersions, they need to be local diffeomorphisms.
\end{proof}

\begin{defn}
	The super Lie groupoids \(\groupoid{H}\) and \(\groupoid{H}'\) are called Morita equivalent if there exists a super Lie groupoid \(\groupoid{G}\) and weak equivalences \(F\colon \groupoid{G}\to \groupoid{H}\) and \(F'\colon \groupoid{G}\to\groupoid{H}'\).

	A superorbifold is the Morita equivalence class of a proper and étale super Lie groupoid.
\end{defn}
\begin{rem}
	In order to see that Morita equivalence is an equivalence relation on the set of super Lie groupoids we need to verify associativity.
	Let \(\groupoid{H}\) and \(\groupoid{H}'\) as well as \(\groupoid{H}'\) and \(\groupoid{H}''\) be Morita equivalent, that is, there exist weak equivalences \(F^{\groupoid{G}\groupoid{H}}\colon \groupoid{G}\to\groupoid{H}\), \(F^{\groupoid{G}\groupoid{H}'}\colon \groupoid{G}\to\groupoid{H}'\), \(F^{\groupoid{G}'\groupoid{H}'}\colon\groupoid{G}'\to \groupoid{H}'\) and \(F^{\groupoid{G}'\groupoid{H}''}\colon \groupoid{G}'\to \groupoid{H}''\).
	We need to show that there is a super Lie groupoid~\(\groupoid{K}\) and weak equivalences \(F^{\groupoid{K}\groupoid{H}}\colon \groupoid{K}\to \groupoid{H}\) and \(F^{\groupoid{K}\groupoid{H}''}\colon\groupoid{K}\to\groupoid{H}''\).
	\begin{diag}
		\matrix[mat](m){
			&& \groupoid{K} && \\
			&\groupoid{G} && \groupoid{G}' & \\
			\groupoid{H} && \groupoid{H}' && \groupoid{H}'' \\
		} ;
		\path[pf]{
			(m-1-3) edge[dotted] node[auto]{\(F^{\groupoid{K}\groupoid{G}}\)} (m-2-2)
				edge[dotted] node[auto]{\(F^{\groupoid{K}\groupoid{G}'}\)} (m-2-4)
				edge[dashed, bend right=70] node[auto,swap]{\(F^{\groupoid{K}\groupoid{H}}\)} (m-3-1)
				edge[dashed, bend left=70] node[auto]{\(F^{\groupoid{K}\groupoid{H}''}\)} (m-3-5)
			(m-2-2) edge node[auto]{\(F^{\groupoid{G}\groupoid{H}}\)} (m-3-1)
				edge node[auto]{\(F^{\groupoid{G}\groupoid{H}'}\)} (m-3-3)
			(m-2-4) edge node[auto]{\(F^{\groupoid{G}'\groupoid{H}'}\)} (m-3-3)
				edge node[auto]{\(F^{\groupoid{G}'\groupoid{H}''}\)} (m-3-5)
		};
	\end{diag}
	As in~\cite[Section~2.3]{M-OAGI} there are two steps to verify:
	\begin{itemize}
		\item
			There exists a groupoid \(\groupoid{K}\) together with weak equivalences \(F^{\groupoid{K}\groupoid{G}}\colon \groupoid{K}\to \groupoid{G}\) and \(F^{\groupoid{K}\groupoid{G}'}\colon \groupoid{K}\to \groupoid{G}'\).
			The super Lie groupoid \(\groupoid{K}\) is constructed by setting
			\begin{align}
				\groupoid{K}_0 &= \groupoid{G}_0\times _{F^{\groupoid{G}\groupoid{H}'}_0, s} \groupoid{H}'_1 \times_{t, F^{\groupoid{G}'\groupoid{H}'}_0} \groupoid{G}'_0 &
				\groupoid{K}_1 &= \groupoid{G}_1\times _{F^{\groupoid{G}\groupoid{H}'}_0\circ s, s} \groupoid{H}'_1 \times_{t, F^{\groupoid{G}'\groupoid{H}'}_0\circ t} \groupoid{G}'_1 &
			\end{align}
			and extend the source, target and composition maps component-wise.
			The projections on the first and last factor \(F^{\groupoid{K}\groupoid{G}}\colon \groupoid{K}\to\groupoid{G}\) and \(F^{\groupoid{K}\groupoid{G}'}\colon \groupoid{K}\to \groupoid{G}'\) can be shown to be weak equivalences using that \(F^{\groupoid{G}\groupoid{H}'}\) and \(F^{\groupoid{G}'\groupoid{H}'}\) are weak equivalences.
			This construction of \(\groupoid{K}\) is called weak fiber product and satisfies a universal property.
			For more details see~\cite[Chapter~5.3]{MM-IFLG}.
		\item
			The composition of weak equivalences is again a weak equivalence.
			Consequently, \(F^{\groupoid{G}\groupoid{H}}\circ F^{\groupoid{K}\groupoid{G}}\colon \groupoid{K}\to \groupoid{H}\) and \(F^{\groupoid{G}'\groupoid{H}''}\circ F^{\groupoid{K}\groupoid{G}'}\colon \groupoid{K}\to \groupoid{H}''\) are weak equivalences and yield a Morita equivalence between \(\groupoid{H}\) and \(\groupoid{H}''\).
	\end{itemize}
\end{rem}
The idea of Morita equivalence is that an equivalence class represents a quotient.
We have already seen instances of this principle in Example~\ref{ex:EquivalenceRelationGroupoidAndQuotient} where we have shown that a regular equivalence relation groupoid and its quotient lie in the same Morita equivalence class.
Similarly, Example~\ref{ex:TranslationGroupoidAndQuotient} shows that the translation groupoid of a proper free groupoid and its quotient are Morita equivalent.
We want to apply this principle to more general situations, where the quotient cannot be realized as a supermanifold, in particular the situation of proper group actions with finite isotropy groups.

In order to better motivate the definition of Morita equivalence let us look at the orbits of the groupoid.
Note that the definition of super Lie groupoid implies that for all superpoints \(C_B\colon C\to B\) the image of \(\underline{(s,t)}(C_B)\) defines an equivalence relation \(\underline{R_{\groupoid{G}}}(C_B)\) on the set \(\underline{\groupoid{G}_0}(C_B)\).
\begin{defn}
	We define the orbit functor of the groupoid \(\groupoid{G}\) as
	\begin{equation}
		\begin{split}
			\underline{\groupoid{G}}\colon\cat{SPoint}^{op}_B &\to \cat{Top} \\
			C_B &\mapsto \faktor{\underline{\groupoid{G}_0}(C_B)}{\underline{R_\groupoid{G}}(C_B)}
		\end{split}
	\end{equation}
\end{defn}

The following lemma shows that Morita equivalent groupoids have the same orbit functor.
Compare also Remark~\ref{rem:EquivalenceRelationSuperPoints}.
\begin{lemma}
	A weak equivalence \(F\colon \groupoid{G}\to\groupoid{H}\) induces a natural isomorphism of functors \(\underline{F}\colon \underline{\groupoid{G}}\to \underline{\groupoid{H}}\).
\end{lemma}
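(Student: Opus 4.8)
\emph{The plan} is to construct the natural transformation $\underline F$ from $F_0$ alone and then check, for each superpoint $C_B\colon C\to B$, that the component $\underline F(C_B)$ is a homeomorphism: bijectivity will follow from the two defining properties of a weak equivalence, and openness from an auxiliary space. By the functor of points, $F_0\colon\groupoid{G}_0\to\groupoid{H}_0$ induces maps $\underline{F_0}(C_B)\colon\underline{\groupoid{G}_0}(C_B)\to\underline{\groupoid{H}_0}(C_B)$ natural in $C_B\in\cat{SPoint}_B^{op}$. First I would verify that $\underline{F_0}(C_B)$ respects the equivalence relations: if $x\sim x'$ in $\underline{R_{\groupoid{G}}}(C_B)$ is witnessed by $g\in\underline{\groupoid{G}_1}(C_B)$ with $s(g)=x$, $t(g)=x'$, then commutativity of diagram~\eqref{diag:HomomorphismSuperLieGroupoid} gives $(s,t)(F_1(g))=(F_0(x),F_0(x'))$, so $F_0(x)\sim F_0(x')$ in $\underline{R_{\groupoid{H}}}(C_B)$. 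Writing $q_{\groupoid{G}}\colon\underline{\groupoid{G}_0}(C_B)\to\underline{\groupoid{G}}(C_B)$ and $q_{\groupoid{H}}$ for the quotient maps, this yields a unique continuous map $\underline F(C_B)$ with $\underline F(C_B)\circ q_{\groupoid{G}}=q_{\groupoid{H}}\circ\underline{F_0}(C_B)$, continuity being the universal property of the quotient topology; naturality of $\underline F$ in $C_B$ is then immediate, since it is induced by $F_0$ and quotient formation commutes with precomposition.

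\emph{Bijectivity.} For surjectivity I would use property~\ref{item:defn:LieGroupoidEquivalence:SurjectiveSubmersion} of Definition~\ref{defn:LieGroupoidHomomorphism}: the map $t\circ p_1\colon\groupoid{H}_1\times_{s,F_0}\groupoid{G}_0\to\groupoid{H}_0$ is a surjective submersion, hence --- admitting local sections around every point of its reduction --- surjective on $C_B$-points; so any $y\in\underline{\groupoid{H}_0}(C_B)$ equals $t(h)$ for a $C_B$-point $(h,x)$ with $s(h)=F_0(x)$, and then $y=t(h)\sim s(h)=F_0(x)$ gives $q_{\groupoid{H}}(y)=\underline F(C_B)(q_{\groupoid{G}}(x))$. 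For injectivity I would use property~\ref{item:defn:LieGroupoidEquivalence:FiberProduct}: if $F_0(x)\sim F_0(x')$ via $h\in\underline{\groupoid{H}_1}(C_B)$, then $(h,(x,x'))$ is a $C_B$-point of $\groupoid{H}_1\times_{(s,t),F_0\times F_0}\left(\groupoid{G}_0\times\groupoid{G}_0\right)$, which is superdiffeomorphic to $\groupoid{G}_1$; the corresponding $g\in\underline{\groupoid{G}_1}(C_B)$ satisfies $(s,t)(g)=(x,x')$, so $x\sim x'$ and $q_{\groupoid{G}}(x)=q_{\groupoid{G}}(x')$.

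\emph{Homeomorphy.} To promote the continuous bijection $\underline F(C_B)$ to a homeomorphism I would compare two quotient maps out of the auxiliary space $P=\groupoid{H}_1\times_{s,F_0}\groupoid{G}_0$, namely $\alpha=t\circ p_1\colon P\to\groupoid{H}_0$ and $\beta=p_2\colon P\to\groupoid{G}_0$. By hypothesis $\alpha$ is a surjective submersion, and $\beta$ is one too, since the identity section $e$ of $s$ provides the section $(e\circ F_0,\id_{\groupoid{G}_0})$ of $\beta$. Submersions induce open maps on $C_B$-points (they are locally projections), and $q_{\groupoid{G}}$, $q_{\groupoid{H}}$ are open because the saturation $\underline t(C_B)\bigl(\underline s(C_B)^{-1}(U)\bigr)$ of an open set $U$ is open; hence $q_{\groupoid{G}}\circ\underline\beta(C_B)$ and $q_{\groupoid{H}}\circ\underline\alpha(C_B)$ are open continuous surjections, i.e.\ quotient maps. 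Since $\alpha(h,x)=t(h)\sim s(h)=F_0(x)=F_0(\beta(h,x))$ for every $C_B$-point $(h,x)$ of $P$, one gets $\underline F(C_B)\circ\bigl(q_{\groupoid{G}}\circ\underline\beta(C_B)\bigr)=q_{\groupoid{H}}\circ\underline\alpha(C_B)$, and because the left factor is a quotient map and $\underline F(C_B)$ is a bijection, $\underline F(C_B)$ is a homeomorphism.

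\emph{Expected obstacle.} The conceptual input is entirely the two properties of a weak equivalence, and the rest is bookkeeping in the functor of points; the step demanding the most care is the last one --- confirming that submersions and orbit quotient maps are open on $C_B$-points in the relative setting over $B$, and that $P$ with its two surjective-submersion legs genuinely transports the quotient topology between $\underline{\groupoid{G}}(C_B)$ and $\underline{\groupoid{H}}(C_B)$.
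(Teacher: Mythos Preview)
Your proposal is correct and follows essentially the same approach as the paper: the construction of \(\underline{F}(C_B)\), the surjectivity argument via condition~\ref{item:defn:LieGroupoidEquivalence:SurjectiveSubmersion}, and the injectivity argument via condition~\ref{item:defn:LieGroupoidEquivalence:FiberProduct} are identical. The only organizational difference is in the openness step: the paper works directly with \(\groupoid{G}_1\simeq\groupoid{H}_1\times_{(s,t),F_0\times F_0}(\groupoid{G}_0\times\groupoid{G}_0)\) and checks by hand that the image of an open saturated set has open saturation, whereas you route both orbit spaces through the single auxiliary space \(P=\groupoid{H}_1\times_{s,F_0}\groupoid{G}_0\) with its two surjective-submersion legs and invoke the standard ``two quotient maps out of one space'' argument. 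Both rely on the same facts (submersions are open on \(C_B\)-points, orbit quotient maps are open), so the difference is cosmetic; your packaging is arguably cleaner and closer to the usual Morita-bibundle viewpoint.
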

\begin{proof}
	By the definition of homomorphism of super Lie groupoids it holds
	\begin{equation}
		\underline{F_0\times F_0}(C_B)(\underline{R_{\groupoid{G}}}(C_B))\subset \underline{R_{\groupoid{H}}}(C_B)
	\end{equation}
	Consequently, the map \(\underline{F_0}(C_B)\) descends to a map between the quotients:
	\begin{equation}
		\underline{F}{(C_B)}\colon \faktor{\underline{\groupoid{G}_0}(C_B)}{\underline{R_{\groupoid{G}}}(C_B)} \to \faktor{\underline{\groupoid{H}_0}(C_B)}{\underline{R_{\groupoid{H}}}(C_B)}
	\end{equation}
	The map \(\underline{F}(C_B)\) is surjective because, by condition~\ref{item:defn:LieGroupoidEquivalence:SurjectiveSubmersion}, for any \(h\in\underline{\groupoid{H}_0}(C_B)\) there exists an \(H\in\underline{\groupoid{H}_1}(C_B)\) and a \(g\in\underline{\groupoid{G}_0}(C_B)\) such that \(\underline{F_0}(C_B)(g)=\underline{s}(C_B)(H)\) and \(\underline{t}(C_B)=h\).
	That is, \((\underline{F_0}(C_B)(g), h)\) lies in \(\underline{R_{\groupoid{H}}}\) and hence the equivalence class of \(h\) lies in the image of \(\underline{F}(C_B)\).
	The map \(\underline{F}(C_B)\) is injective because if \((h, h')\in\underline{R_{\groupoid{H}}}(C_B)\) then there exists \((g, g')\in\underline{R_\groupoid{G}}(C_B)\) such that \(\underline{F_0\times F_0}(C_B)((g, g'))=(h, h')\) by condition~\ref{item:defn:LieGroupoidEquivalence:FiberProduct} of Definition~\ref{defn:LieGroupoidHomomorphism}.

	Continuity of \(\underline{F}(C_B)\) follows from continuity of \(\underline{F_0}(C_B)\) and the continuity of the projections.
	To see that \(\underline{F}(C_B)\) is also open we consider the following commutative diagram:
	\begin{diag}
		\matrix[mat](m){
			\underline{\groupoid{G}_1}(C_B)\simeq \underline{\groupoid{H}_1\times_{(s,t),F_0\times F_0} \left(\groupoid{G}_0\times \groupoid{G}_0\right)}(C_B) & \underline{\groupoid{H}_1}(C_B) \\
			\underline{\groupoid{G}_0}(C_B) & \underline{\groupoid{H}_0}(C_B) \\
			\underline{\groupoid{G}}(C_B)=\faktor{\underline{\groupoid{G}_0}(C_B)}{\underline{R_{\groupoid{G}}}(C_B)} & \underline{\groupoid{G}}(C_B)=\faktor{\underline{\groupoid{H}_0}(C_B)}{\underline{R_{\groupoid{H}}}(C_B)} \\
		} ;
		\path[pf]{
			(m-1-1) edge node[auto]{\(\underline{p_1}(C_B)\)} (m-1-2)
				edge node[auto]{\(\underline{t}(C_B)\)} (m-2-1)
			(m-1-2) edge node[auto]{\(\underline{t}(C_B)\)} (m-2-2)
			(m-2-1) edge node[auto]{\(\underline{F_0}(C_B)\)} (m-2-2)
			edge node[auto]{\(\underline{p_\groupoid{G}}(C_B)\)} (m-3-1)
			(m-2-2) edge node[auto]{\(\underline{p_\groupoid{H}}(C_B)\)}(m-3-2)
			(m-3-1) edge node[auto]{\(\underline{F}(C_B)\)} (m-3-2)
		};
	\end{diag}
	Let now \(U\subset\underline{\groupoid{G}}(C_B)\) be open.
	To show that \(\underline{F}(C_B)(U)\) is open it suffices to show that the saturation of
	\begin{equation}
		\tilde{V}=\underline{F_0}(C_B)\left({\left(\underline{p_\groupoid{G}}(C_B)\right)}^{-1}(U)\right)
	\end{equation}
	is open in \(\underline{\groupoid{H}_0}(C_B)\).
	The set \(\tilde{V}\) is open because \({\left(\underline{p_\groupoid{G}}(C_B)\circ\underline{t}(C_B)\right)}^{-1}(U)\) is open and \(\underline{t}(C_B)\circ\underline{p_1}(C_B)\) is a surjective submersion and hence open.
	The saturation of \(\tilde{V}\) is the union
	\begin{equation}
		\underline{s}(C_B)\left({\left(\underline{t}(C_B)\right)}^{-1}(\tilde{V})\right) \cup\underline{t}(C_B)\left({\left(\underline{s}(C_B)\right)}^{-1}(\tilde{V})\right)
	\end{equation}
	and hence open because \(\underline{s}(C_B)\) and \(\underline{t}(C_B)\) are open.

	Functoriality of \(\underline{F}(C_B)\) follows from functoriality of the functors of points of \(\groupoid{G}_0\), \(\groupoid{G}_1\), \(\groupoid{H}_0\) and \(\groupoid{H}_1\) as well as all constructions involved.
\end{proof}
\begin{ex}
	Let \(\groupoid{G}\) be a proper étale groupoid and \(j\colon V\to\groupoid{G}_0\) an open subsupermanifold.
	We construct the restriction \(\groupoid{G}|_{V}=\groupoid{V}\) by setting \(\groupoid{V}_0=V\) and \(\groupoid{V}_1=\groupoid{G}_1\times_{(s,t),j\times j} \left(V\times V\right)\) and obtain a homomorphism of super Lie groupoids \(\groupoid{V}\to \groupoid{G}\).
	Up to restricting \(V\) further we have that \(\groupoid{V}_1\) is a trivial finite cover of \(\groupoid{V}_0=V\) and hence \(\groupoid{V}\) is the translation groupoid of a finite group action on \(V\).
	The orbit functor \(\underline{\groupoid{G}}\) of \(\groupoid{G}\) is locally isomorphic to the orbit functor \(\underline{\groupoid{V}}\) of \(\groupoid{V}\).

	We can find an open cover \(\Set{U_\alpha}_{\alpha\in A}\) of \(\groupoid{G}_0\) such that all restrictions \(\groupoid{G}|_{U_\alpha}\) are translation groupoids \(G_\alpha\ltimes U_\alpha\) for the action \(a_\alpha\colon G_\alpha\times U_\alpha\to U_\alpha\) of some finite group \(G_\alpha\) on \(U_\alpha\).
	We define a groupoid \(\groupoid{U}\) by
	\begin{align}
		\groupoid{U}_0 &= \coprod_{\alpha\in A} U_\alpha, &
		\groupoid{U}_1 &= \groupoid{G}_1\times_{(s,t),j\times j} \left(\groupoid{U}_0\times\groupoid{U}_0\right),
	\end{align}
	where \(j\colon \groupoid{U}_0 = \coprod U_\alpha\to \groupoid{G}\) is the map induced from the inclusions \(U_\alpha\subset\groupoid{G}_0\).
	By construction the resulting super Lie groupoid homomorphism \(\groupoid{U}\to\groupoid{G}\) is a weak equivalence.
	Up to refining the cover \(\Set{U_\alpha}\) we can assume that if \(U_\alpha\cap U_\beta\neq \emptyset\) then \(U_\alpha\cap U_\beta\in\Set{U_\alpha}\) and that any \(U_\alpha\) is an open subsupermanifold of \(\R^{m|n}\).
	Any embedding \(\lambda_{\alpha\beta}\colon U_\beta\to U_\alpha\) yields a homomorphism between super Lie groupoids \(G_\alpha\ltimes U_\alpha\to G_\beta\ltimes U_\beta\), that is a group homomorphism \(\rho_{\alpha\beta}\colon G_\alpha\to G_\beta\) such that \(\lambda_{\alpha\beta}\) is \(\rho_{\alpha\beta}\)-equivariant.

	We have thus arrived at the notions of superorbifold charts and atlas:
	Let \(\underline{Q}\colon \cat{SPoint}^{op}_B\to \cat{Top}\) be a functor.
	\begin{itemize}
		\item
			A superorbifold chart on \(\underline{Q}\) is an open subsupermanifold \(U\subset\R^{m|n}\times B\) over \(B\) together with an action \(a\colon G\times U\to U\) of a finite group \(G\) on \(U\) and a natural transformation \(\underline{q}\colon \underline{G\ltimes U}\to\underline{Q}\) such that \(\underline{G\ltimes U}\) is naturally isomorphic to its image in \(\underline{Q}\).
			\item
				An embedding \(\Lambda\colon (U, G, a, \underline{q}) \to (U', G', a', \underline{q}')\) between two superorbifold charts consists of a group homomorphism \(\rho\colon G\to G\) and a smooth embedding \(\lambda\colon U\to U'\) which is \(\rho\)-equivariant.
				Furthermore the induced map \(\underline{\lambda}\colon \underline{G\ltimes U}\to \underline{G'\ltimes U'}\) satisfies \(\underline{q}'\circ \underline{\lambda} = \underline{q}\).
			\item
				A superorbifold atlas on \(\underline{Q}\) consists of a set of superorbifold charts
				\begin{equation}
					\Set{(U_\alpha, G_\alpha, a_\alpha, \underline{q}_\alpha)}_{\alpha\in A}
				\end{equation}
				and a set of embeddings
				\begin{equation}
					\Set{\Lambda_{\alpha\beta}=(\lambda_{\alpha\beta}, \rho_{\alpha\beta}, \underline{\lambda}_{\alpha\beta}) \colon (U_\alpha, G_\alpha, a_\alpha, \underline{q}_{\alpha})\to (U_\beta, G_\beta, a_\beta, \underline{q}_\beta)}_{(\alpha, \beta)\in A'}
				\end{equation}
				such that whenever there is a \(p\in\underline{Q}(C_B)\) such that \(p\in\im \underline{q}_\alpha(C_B)\cap\im \underline{q}_\beta(C_B)\) there is another superorbifold chart \((U_\gamma, G_\gamma, a_\gamma, \underline{q}_\gamma)\) such that \(p\in\im \underline{q}_\gamma(C_B)\) as well as embeddings \(\Lambda_{\gamma\alpha}\) and \(\Lambda_{\gamma\beta}\).
	\end{itemize}
	We have shown above, how a proper étale super Lie groupoid yields a superorbifold atlas.
	The data of a superorbifold atlas can be seen as an alternative definition of a superorbifold that is closer to the original ideas of~\cite{S-GNM}.

	We briefly indicate how to construct a super Lie groupoid \(\groupoid{G}\) out of an superorbifold atlas such that \(\underline{\groupoid{G}}=\underline{Q}\).
	Let
	\begin{align}
		\groupoid{G}_0 &= \coprod_{\alpha\in A} U_\alpha, &
		\groupoid{G}_1 &= \coprod_{\alpha\in A} \groupoid{G}_1^\alpha \cup \coprod_{(\alpha, \beta)\in A'} \groupoid{G}_1^{\alpha\beta} \cup \coprod_{(\alpha, \beta)\in A'} \groupoid{G}_1^{\beta\alpha},
	\end{align}
	where
	\begin{align}
		\groupoid{G}_1^\alpha &= G_\alpha\times U_\alpha, &
		\groupoid{G}_1^{\alpha\beta} &= G_\beta\times U_\alpha, &
		\groupoid{G}_1^{\beta\alpha}= \left(G_\beta\times U_\beta\right)\times_{a_\beta, \lambda_{\alpha\beta}}U_\alpha.
	\end{align}
	All the maps necessary for the definition of the groupoid will be defined on the disjoint open subsets separately:
	\begin{align}
		s|_{\groupoid{G}_1^\alpha} &= p_{U_\alpha} &
		s|_{\groupoid{G}_1^{\alpha\beta}} &= p_{U_\alpha} &
		s|_{\groupoid{G}_1^{\beta\alpha}} &= p_{U_\beta} \\
		t|_{\groupoid{G}_1^\alpha} &= a_\alpha &
		t|_{\groupoid{G}_1^{\alpha\beta}} &= a_\beta\circ\left(\id_{G_\beta}\times \lambda_{\alpha\beta}\right) &
		t|_{\groupoid{G}_1^{\beta\alpha}} &= p_{U_\alpha}
	\end{align}
	In particular \(\groupoid{G}_1\) contains all arrows of the local translation groupoids \(\groupoid{G}_1^\alpha={\left(G_\alpha\ltimes U_\alpha\right)}_1\) and we use their identity, composition and inverse maps.
	The composition maps
	\begin{align}
		c|_{\groupoid{G}_1^\alpha\times_{t,s}\groupoid{G}_1^{\alpha\beta}}\colon \groupoid{G}_1^\alpha\times_{t,s}\groupoid{G}_1^{\alpha\beta}&\to \groupoid{G}_1^{\alpha\beta} &
		c|_{\groupoid{G}_1^{\alpha\beta}\times_{t,s}\groupoid{G}_1^\beta} \colon \groupoid{G}_1^{\alpha\beta}\times_{t,s}\groupoid{G}_1^\beta &\to \groupoid{G}_1^{\alpha\beta} \\
		c|_{\groupoid{G}_1^\beta\times_{t,s}\groupoid{G}_1^{\beta\alpha}}\colon \groupoid{G}_1^\beta\times_{t,s}\groupoid{G}_1^{\beta\alpha}&\to \groupoid{G}_1^{\beta\alpha} &
		c|_{\groupoid{G}_1^{\beta\alpha}\times_{t,s}\groupoid{G}_1^\alpha} \colon \groupoid{G}_1^{\beta\alpha}\times_{t,s}\groupoid{G}_1^\alpha &\to \groupoid{G}_1^{\beta\alpha}
	\end{align}
	are defined by using the group homomorphism \(\rho_{\alpha\beta}\) and the group multiplication in \(G_\beta\) in the only possible way.
	The \(\rho_{\alpha\beta}\)-equivariance of \(\lambda_{\alpha\beta}\) assures that all maps are well defined.
	The inverse maps
	\begin{align}
		i|_{\groupoid{G}_1^{\alpha\beta}} \colon \groupoid{G}_1^{\alpha\beta} &\to \groupoid{G}_1^{\beta\alpha} &
		i|_{\groupoid{G}_1^{\beta\alpha}} \colon \groupoid{G}_1^{\beta\alpha} &\to \groupoid{G}_1^{\alpha\beta}
	\end{align}
	are constructed with the inverse in the group \(G_\beta\).
\end{ex}
\begin{rem}
	We do not attempt to define smooth maps between superorbifolds here.
	Certainly, a homomorphism between super Lie groupoids induces a smooth map between the corresponding Morita equivalence classes.
	However, the question which of the super Lie groupoid homomorphisms induce equivalent smooth maps between the Morita equivalence classes is subtle.
	For a discussion of this point in the setting of non-super orbifolds we refer to~\cite[Section~3.1]{L-OAS}.
\end{rem}
\begin{defn}
	Let \(\groupoid{G}\) be a super Lie groupoid representing a superorbifold.
	We say that the dimension of the superorbifold \(\groupoid{G}\) is
	\begin{equation}
		\dim\groupoid{G} = 2\dim \groupoid{G}_0 - \dim\groupoid{G}_1.
	\end{equation}
\end{defn}
To see that the dimension is well defined, suppose \(F\colon \groupoid{G}\to \groupoid{H}\) is a weak equivalence.
Hence,
\begin{equation}
	\groupoid{G}_1 = \groupoid{H}_1\times_{(s,t), F_0\times F_0}\left(\groupoid{G}_0\times \groupoid{G}_0\right)
\end{equation}
and consequently
\begin{equation}
	\dim \groupoid{G}_1 = \dim \groupoid{H}_1 + 2\dim \groupoid{G}_0 - 2 \dim\groupoid{H}_0.
\end{equation}
If \(\groupoid{G}\) is a proper étale groupoid, we have \(\dim \groupoid{G} = \dim \groupoid{G}_0\).
If \(R\) is a regular equivalence relation on \(M\) the dimension of the regular equivalence relation groupoid is \(2\dim M - \dim R\) and if \(a\colon G\times M\) is a group action, the dimension of the translation groupoid is \(\dim M - \dim G\).

The following Theorem is now a reformulation of Lemma~\ref{lemma:FiniteIsotropyActionProperEtaleGroupoid}:
\QuotientOrbifoldTheorem{}
 
\section{Stable supercurves of genus zero}
In this section we discuss several moduli spaces of super Riemann surfaces of genus zero with marked points.
Recall that the classical moduli space \(M_{0,k}\) of Riemann surfaces of genus zero with \(k\) marked points possesses a compactification by stable curves.
This idea goes back to Deligne and Mumford using the methods of algebraic geometry, see for example~\cite{HM-MC}, and the algebro-geometric approach has been generalized to super Riemann surfaces in~\cites{D-LaM}{OV-SMSGZSUSYCRP}{FKP-MSSCCLB}.

As a preparation for Section~\ref{Sec:SuperStableMapsOfGenusZero}, we will use here a more differential geometric approach:
Stable Riemann surfaces of genus can be modeled as trees, where each vertex represents a copy of \(\ProjectiveSpace[\C]{1}\) and edges represent nodes, see~\cite[Appendix~D]{McDS-JHCST}.
If every copy has at least three special points, that is nodes or marked points, the resulting nodal Riemann surface is stable and has no automorphisms.

In the supergeometric generalization proposed here, a stable super Riemann surfaces is consequently modeled as a tree of super Riemann surfaces of genus zero with marked points.
Using a detailed study of their automorphisms and the methods of the last chapter we show that the moduli space of stable super Riemann surfaces of genus zero and fixed tree type is a superorbifold.
Taking the union over all tree types yields a realization of a compact moduli space of stable super Riemann surfaces of genus zero.

In Section~\ref{sec:AutomorphismsOfP11}, we discuss the supergeometric analogue of Möbius transformations on the only super Riemann surface of genus zero, the projective superspace~\(\ProjectiveSpace[\C]{1|1}\), and show that a super Möbius transformation is completely determined by the image of three points.
In Section~\ref{sec:MarkedPointsOnP11} we discuss the moduli space of \(k\)-marked points on a single copy of \(\ProjectiveSpace[\C]{1|1}\) and in Section~\ref{sec:StableSRSofGenusZero} we discuss the moduli space of a stable super Riemann surface of fixed tree type.

\subsection{Superconformal automorphisms of \texorpdfstring{\(\ProjectiveSpace[\C]{1|1}\)}{PC1|1}}\label{sec:AutomorphismsOfP11}
In this section, we recall the super Riemann surface structure on \(\ProjectiveSpace[\C]{1|1}\) and investigate its automorphisms.
Recall that the supermanifold structure on \(\ProjectiveSpace[\C]{1|1}\) is given by an atlas with two open sets \(U_1\simeq\C^{1|1}\) and \(U_2\simeq\C^{1|1}\) with holomorphic coordinates \((z_1, \theta_1)\) and \((z_2, \theta_2)\) respectively.
The coordinates are identified away from zero by
\begin{align}
	z_2 &= \frac1{z_1}, &
	\theta_2 &= \frac{\theta_1}{z_1}.
\end{align}
The superconformal structure on \(\ProjectiveSpace[\C]{1|1}\) is generated by \(\partial_{\theta_1}+\theta_1\partial_{z_1}\) and \(\partial_{\theta_2} - \theta_2\partial_{z_2}\) respectively.

The \(B\)-points of \(\ProjectiveSpace[\C]{1|1}\) can be described in terms of projective coordinates, that is, triples \([Z_1:Z_2:\Theta]\), where at least one of \(Z_i\in{\left(\cO_B\otimes\C\right)}_0\) is invertible and \(\Theta\in{\left(\cO_B\otimes\C\right)}_1\).
The triple \([Z_1:Z_2:\Theta]\) describes the \(B\)-point of \(U_1\) given by
\begin{align}
	z_1 &= \frac{Z_1}{Z_2} &
	\theta_1 &= \frac{\Theta}{Z_2}
\end{align}
if \(Z_2\) is invertible and the \(B\)-point of \(U_2\) given by
\begin{align}
	z_2 &= \frac{Z_2}{Z_1} &
	\theta_2 &= \frac{\Theta}{Z_1}
\end{align}
if \(Z_1\) is invertible.
Notice that \([Z_1:Z_2:\Theta]\) and \([\lambda Z_1: \lambda Z_2 : \lambda\Theta]\) describe the same \(B\)-point of \(\ProjectiveSpace[\C]{1|1}\) for every invertible \(\lambda\in\cO_B\otimes\C\).
We denote by \(0\), \(1\) and \(\infty\) the \(B\)-points \([0:1:0]\), \([1:1:0]\) and \([1:0:0]\) respectively.
Furthermore, for an odd \(\epsilon\in\cO_B\otimes\C\) we denote by \(1_\epsilon\) the point \([1:1:\epsilon]\).

Any \(L\in\GL_{\cO_B\otimes\C}(2|1)\) induces an automorphism \(l\) of \(\ProjectiveSpace[\C]{1|1}\times B\) over \(B\) via matrix multiplication on the projective coordinates:
\begin{equation}
	[\tilde{Z}_1:\tilde{Z}_2:\tilde{\Theta}]
	= [Z_1:Z_2:\Theta]
	\begin{pmatrix}
		a & c & \gamma \\
		b & d & \delta \\
		\alpha & \beta & e\\
	\end{pmatrix}
\end{equation}
If the matrix \(L\) is an element of \(\SpGL_{\cO_B\otimes\C}(2|1)\), that is,
\begin{equation}\label{eq:Sp21}
	\begin{aligned}
		ad - bc - \gamma\delta &= 1, &
		a\beta - c\alpha + e\gamma &= 0, \\
		e^2 + 2\alpha\beta &= 1, &
		b\beta - d\alpha + e\delta &= 0,
	\end{aligned}
\end{equation}
the automorphism \(l\) is superconformal.
A superconformal automorphism is an automorphism of the super Riemann surface \(\ProjectiveSpace[\C]{1|1}\).
That is, it preserves in addition to the complex structure also the distribution~\(\cD\).
In the coordinates \((z_1, \theta_1)\) the superconformal automorphism \(l\) induced by \(L\in\SpGL_{\cO_B\otimes\C}(2|1)\) is given by
\begin{align}
	l^\#z_1
	&= \frac{a z_1 + b + \theta_1\alpha}{cz_1 + d + \theta_1\beta}
	= \frac{az_1 + b}{cz_1+ d} \pm \theta_1\frac{\gamma z_1 + \delta}{{\left(c z_1 + d\right)}^2} \\
	l^\#\theta_1
	&= \frac{\gamma z_1 + \delta + \theta_1 e}{c z_1 + d + \theta_1 \beta}
	= \frac{\gamma z_1 + \delta}{c z_1 + d} \pm \theta_1\frac1{cz_1 + d}
\end{align}
Expressions in the coordinates \((z_2, \theta_2)\) can be obtained analogously.
For details of the calculation see~\cite[Examples~9.4.4 and 2.10.11]{EK-SGSRSSCA}.
Earlier sources are~\cites{H-MSRS}{CR-SRSUTT}[Chapter~2.1]{M-TNCG}.

More abstractly, the super Lie group \(\SpGL_\C(2|1)\) given by \(\underline{\SpGL_\C(2|1)}(B)=\SpGL_{\cO_B\otimes\C}(2|1)\) acts on \(\ProjectiveSpace[\C]{1|1}\).
Any \(B\)-point \(L\in\underline{\SpGL_\C(2|1)}(B)\) describes a superconformal automorphisms \(l\colon \ProjectiveSpace[\C]{1|1}\times B\to \ProjectiveSpace[\C]{1|1}\) and acts on \(B\)-points \(p\colon B\to \ProjectiveSpace[\C]{1|1}\) by composition \(l\circ p\).
The algebraic super Lie group structure of \(\SpGL_{\C}(2|1)\) has been investigated in~\cite{FK-PLSGSPAP11}.
In the following we will show that any superconformal automorphism of \(\ProjectiveSpace[\C]{1|1}\times B\) over \(B\) is induced from an element \(L\in\underline{\SpGL_{\C}(2|1)}(B)\) but first we will list some examples.

\begin{ex}[{Lift of Möbius transformations to \(\ProjectiveSpace[\C]{1|1}\)}]
	Any Möbius transformation \(\frac{a_0z + b_0}{c_0z+d_0}\) of \(\ProjectiveSpace[\C]{1}\) with complex coefficients \(a_0\), \(b_0\), \(c_0\) and \(d_0\) such that \(a_0 d_0 - b_0 c_0=1\) lifts to an automorphism of \(\ProjectiveSpace[\C]{1|1}\) given by
	\begin{align}
		l^\# z &= \frac{a_0z + b_0}{c_0z + d_0} &
		l^\# \theta &= \theta\frac1{c_0z + d_0}
	\end{align}
	The corresponding matrix in \(\SpGL_{\cO_B\otimes\C}(2|1)\) is given by
	\begin{equation}
		\begin{pmatrix}
			a_0 & c_0 & 0 \\
			b_0 & d_0 & 0 \\
			0 & 0 & 1 \\
		\end{pmatrix}
	\end{equation}
\end{ex}
\begin{ex}[Reflection of the odd directions]\label{ex:ReflectionOddDirections}
	Another important example, the reflection of the odd direction \(\Xi_-\) is given by the matrix
	\begin{equation}
		\begin{pmatrix}
			-1 & 0 & 0 \\
			0 & -1 & 0 \\
			0 & 0 & 1 \\
		\end{pmatrix}.
	\end{equation}
	In the coordinates \((z_1, \theta_1)\) the automorphism \(\Xi_-\) is given by
	\begin{align}
		\Xi_-^\#z_1 &= z_1, &
		\Xi_-^\#\theta_1 &= -\theta_1.
	\end{align}
\end{ex}
\begin{ex}
	For odd elements \(\gamma\), \(\delta\in\cO_B\otimes\C\) and an even nilpotent \(\sigma\in\cO_B\) let \(l\) be the automorphism induced by the matrix
	\begin{equation}
		\begin{pmatrix}
			1 & \sigma & \gamma \\
			0 & 1 + \gamma\delta & \delta \\
			\delta  & \sigma\delta - \gamma  & 1-\gamma\delta \\
		\end{pmatrix}
	\end{equation}
	In the coordinates \((z_1, \theta_1)\) the automorphism \(l\) is given by
	\begin{align}
		l^\# z_1 &= \frac{z_1}{\sigma z_1 + 1 + \gamma\delta} + \theta_1\frac{\gamma z_1 + \delta}{{\left(\sigma z_1 + 1 + \gamma\delta\right)}^2} &
		l^\#\theta_1 &= \frac{\gamma z_1 + \delta}{\sigma z_1 + 1 + \gamma\delta} + \theta_1\frac1{\sigma z_1 + 1 + \gamma\delta}
	\end{align}
	The denominators can be rewritten as follows
	\begin{equation}
		\begin{split}
			\frac1{\sigma z_1 + 1 + \gamma\delta}
			&= \sum_{n=0}^k {\left(- \sigma z_1 - \gamma\delta\right)}^n
			= \sum_{n=0}^k{\left(-1\right)}^n \left(\sigma^n z_1^n + n\sigma^{n-1}\gamma\delta z^{n-1}\right) \\
			&= \sum_{n=0}^k{\left(-\sigma\right)}^n\left(1 + (n+1)\gamma\delta\right) z_1^n
		\end{split}
	\end{equation}
	where \(k\) is the smallest integer such that \(\sigma^{k+1}=0\).
	This leads to
	\begin{align}
		\begin{split}
			l^\# z_1
			={}& \sum_{n=0}^k{\left(-\sigma\right)}^n\left(1 + (n+1)\gamma\delta\right) z_1^{n+1} + \theta_1\left(\gamma z_1 + \delta\right){\left(\sum_{n=0}^k{\left(-\sigma\right)}^n z_1^n\right)}^2 \\
			={}& \left(1 + \gamma\delta\right)z_1 - \sigma\left(1 + 2\gamma\delta\right)z_1^2 + \sigma^2\left(1+3\gamma\delta\right)z_1^3 + \dotsb \\
			&+ \theta_1\left( \delta + \left(\gamma - \frac{(k+1)k}2\sigma\delta\right)z_1 + \dotsb \right)
		\end{split} \\
		\begin{split}
			l^\#\theta_1
			={}&  \left(\gamma z_1 + \delta\right) \sum_{n=0}^k{\left(-\sigma\right)}^n z_1^n+ \theta_1\sum_{n=0}^k{\left(-\sigma\right)}^n\left(1 + (n+1)\gamma\delta\right) z_1^n \\
			={}& \delta + \left(\gamma - \sigma\delta\right)z_1 + (-\gamma\sigma + \sigma^2\delta)z_1^2 + \dotsb \\
			&+ \theta_1\left(1 + \gamma\delta - \sigma\left(1 + 2\gamma\delta\right)z_1 + \sigma^2\left(1+3\gamma\delta\right)z_1^2 + \dotsb \right)
		\end{split}
	\end{align}
	Those higher order polynomials might lead to the wrong impression that the automorphism~\(l\) is not induced by an \(L\in\SpGL_{\cO_B\otimes\C}(2|1)\).
	But as we will see below, in Corollary~\ref{Cor:AutomorphismsP11AllLinear}, all superconformal automorphisms of \(\ProjectiveSpace[\C]{1|1}\) are induced by \(B\)-points of \(\SpGL_{\C}(2|1)\).
\end{ex}

Recall that holomorphic automorphisms of \(\ProjectiveSpace[\C]{1}\) are Möbius transformations.
A Möbius transformation is completely determined by the image of three points.
The generalization of this statement to super Riemann surfaces is the following:

\begin{prop}\label{prop:LinearMapPrescribed3Points}
	Let \(p_i\colon B\to\ProjectiveSpace[\C]{1|1}\), \(i=1, \dotsc, 3\) be three \(B\)-points such that the reduction gives three distinct points of \(\ProjectiveSpace[\C]{1}\).
	There exists a matrix \(L\in\underline{\SpGL_{\C}(2|1)}(B)\) and an odd \(\epsilon\in\cO_B\otimes\C\) such that the induced superconformal automorphism maps \(0\) to \(p_1\), \(1_\epsilon\) to~\(p_2\) and \(\infty\) to \(p_3\).
\end{prop}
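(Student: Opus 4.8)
The plan is to imitate the classical fact that any three distinct points of \(\ProjectiveSpace[\C]{1}\) are the images of \(0,1,\infty\) under a Möbius transformation, now carried out over \(\cO_B\otimes\C\). I would build the desired automorphism as a composition \(l_{L_2}\circ l_{L_3}\circ l_{L_1}\circ l_{L_0}\) of four superconformal automorphisms coming from explicit matrices \(L_0,L_1,L_3,L_2\in\underline{\SpGL_\C(2|1)}(B)\): the matrix \(L_0\) fixes up the reductions by a classical Möbius transformation, and \(L_1,L_3,L_2\) successively remove the nilpotent and odd deviations of \(p_1\), then of \(p_3\), then of \(p_2\), with the last surviving odd deviation absorbed into the parameter \(\epsilon\). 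Since the matrices act by \([Z]\mapsto[Z]L\), this composition is \(l_G\) with \(G=L_0L_1L_3L_2\), and \(G\) again lies in \(\underline{\SpGL_\C(2|1)}(B)\) because that is a group; then \(L:=G^{-1}\) together with the resulting \(\epsilon\) has the asserted property.

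Concretely: as \(\Red{p_1},\Red{p_2},\Red{p_3}\) are distinct I first take \(L_0\) to be the block-diagonal lift (as in the example on lifts of Möbius transformations above) of the unique Möbius transformation carrying \(\Red{p_1},\Red{p_2},\Red{p_3}\) to \(0,1,\infty\); after replacing each \(p_i\) by \(l_{L_0}(p_i)\) I may assume \(\Red{p_1}=0\), \(\Red{p_2}=1\), \(\Red{p_3}=\infty\) and hence, normalising projective coordinates, \(p_1=[u_1:1:\xi_1]\), \(p_2=[1:v_2:\xi_2]\), \(p_3=[1:u_3:\xi_3]\) with \(u_1,u_3,v_2-1\in{\left(\cO_B\otimes\C\right)}_0\) nilpotent and \(\xi_1,\xi_2,\xi_3\in{\left(\cO_B\otimes\C\right)}_1\). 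Then I set
\[
L_1=\begin{pmatrix}1&0&0\\-u_1&1&-\xi_1\\-\xi_1&0&1\end{pmatrix},\quad
L_3=\begin{pmatrix}1&-u_3&-\xi_3\\0&1&0\\0&\xi_3&1\end{pmatrix},\quad
L_2=\operatorname{diag}(a,a^{-1},1),
\]
where \(u_3,\xi_3\) now denote the coordinates of \(p_3\) after the \(L_1\)-correction and \(a=1+(\text{nilpotent})\) is a square root of the even coordinate \(v_2\) of \(p_2\) after the \(L_1\)- and \(L_3\)-corrections. A direct substitution into~\eqref{eq:Sp21}, using \(\xi_1^2=\xi_3^2=0\), shows \(L_1,L_3,L_2\in\underline{\SpGL_\C(2|1)}(B)\); and the row-vector action shows that \(l_{L_1}\) sends \([u_1:1:\xi_1]\) to \(0\) and fixes \(\infty\), that \(l_{L_3}\) sends \([1:u_3:\xi_3]\) to \(\infty\) and fixes \(0\), and that \(l_{L_2}\) fixes \(0\) and \(\infty\) and sends \([1:v_2:\xi_2]\) to \([a^2:v_2:a\xi_2]=[1:1:\xi_2/a]\); so \(\epsilon:=\xi_2/a\in{\left(\cO_B\otimes\C\right)}_1\) is the required odd element. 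The only elementary inputs are that a \(B\)-point of \(\ProjectiveSpace[\C]{1|1}\) whose reduction is not \(\infty\) (resp.\ not \(0\)) has a representative with second (resp.\ first) homogeneous coordinate equal to \(1\), and that \(1+(\text{nilpotent})\in{\left(\cO_B\otimes\C\right)}_0\) is invertible and admits a square root given by a finite binomial series.

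The content of the argument is bookkeeping rather than any hard step: one must carry all three \(B\)-points simultaneously through each of the four stages and re-verify~\eqref{eq:Sp21}. The one place needing a little care is that the corrections must not undo one another, which holds because \(L_1\) and \(L_3\) have identity reduction --- so they do not disturb the reductions \(0,1,\infty\) --- and each later matrix fixes whatever has already been brought to standard position: \(l_{L_3}\) fixes \(0\), and \(l_{L_2}\) fixes both \(0\) and \(\infty\). Conceptually, the reason the middle point can only be moved to \(1_\epsilon\) and not to \(1\) is a dimension count: the orbit map \(L\mapsto(l_L(0),l_L(1),l_L(\infty))\) has image whose odd dimension is one smaller than that of \({\left(\ProjectiveSpace[\C]{1|1}\right)}^{3}\), and the odd parameter \(\epsilon\) makes up exactly the difference.
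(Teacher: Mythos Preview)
Your argument is correct and genuinely different from the paper's. The paper writes down the single matrix equation
\[
\begin{pmatrix}0&1&0\\1&1&\epsilon\\1&0&0\end{pmatrix}L
=
\begin{pmatrix}\lambda_1 p_{11}&\lambda_1 p_{12}&\lambda_1\pi_1\\\lambda_2 p_{21}&\lambda_2 p_{22}&\lambda_2\pi_2\\\lambda_3 p_{31}&\lambda_3 p_{32}&\lambda_3\pi_3\end{pmatrix}
\]
and solves it directly: the first and third rows determine \(a,b,c,d,\gamma,\delta\) in terms of \(\lambda_1,\lambda_3\); the relations~\eqref{eq:Sp21} then fix \(e,\alpha,\beta\) up to a global sign; the second row together with the normalisation \(ad-bc-\gamma\delta=1\) gives a system for \(\lambda_1,\lambda_2,\lambda_3,\epsilon\) that is solved by recursion in the odd generators of \(\cO_B\). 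Your approach instead factorises the problem geometrically, first handling the reductions with a lifted Möbius map and then stripping off the nilpotent and odd deviations one point at a time with explicit unipotent matrices. What the paper's route buys is a closed expression for \(\epsilon\) in terms of the \(p_i\) (the ``pseudoinvariant'' mentioned in the subsequent remark) and a natural appearance of the \(\pm\) sign in \(e,\alpha,\beta\), which is exactly the \(\Xi_-\)-ambiguity used later. What your route buys is transparency: each of your four matrices is visibly in \(\SpGL_\C(2|1)\) and has an obvious geometric effect, so no recursive solving is needed. Both proofs are short; yours is closer in spirit to the standard classical argument, while the paper's is better suited to the applications that follow.
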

\begin{proof}
	Denote the projective coordinates of \(p_i\) by \([p_{i1}:p_{i2}:\pi_i]\).
	We are looking for a matrix \(L\in\SpGL_{\cO_B\otimes\C}(2|1)\), an odd \(\epsilon\in\cO_B\otimes\C\) and invertible \(\lambda_i\in\cO_B\otimes\C\) such that
	\begin{equation}\label{eq:AutomorphismsOfP11Def}
		\begin{split}
			\begin{pmatrix}
				0 & 1 & 0\\
				1 & 1 & \epsilon \\
				1 & 0 & 0 \\
			\end{pmatrix}
			\begin{pmatrix}
				a & c & \gamma \\
				b & d & \delta \\
				\alpha & \beta & e\\
			\end{pmatrix}
			&=
			\begin{pmatrix}
				b & d & \delta \\
				a + b + \epsilon\alpha & c + d + \epsilon\beta & \gamma + \delta + \epsilon e \\
				a & c & \gamma \\
			\end{pmatrix} \\
			&=
			\begin{pmatrix}
				\lambda_1 p_{11} & \lambda_1 p_{12} & \lambda_1 \pi_1 \\
				\lambda_2 p_{21} & \lambda_2 p_{22} & \lambda_2 \pi_2 \\
				\lambda_3 p_{31} & \lambda_3 p_{32} & \lambda_3 \pi_3 \\
			\end{pmatrix}.
		\end{split}
	\end{equation}
	From the first and last row we read off
	\begin{align}
		a &= \lambda_3 p_{31} &
		b &= \lambda_1 p_{11} &
		c &= \lambda_3 p_{32} &
		d &= \lambda_1 p_{12} &
		\gamma &= \lambda_3 \pi_3 &
		\delta &= \lambda_1 \pi_1
	\end{align}
	By~\cite[Example~2.10.11]{EK-SGSRSSCA}, the remaining entries of the matrix \(L\) are determined up to a sign by the Equations~\eqref{eq:Sp21}:
	\begin{align}
		e &= \pm\left(1-\gamma\delta\right) = \pm\left(1-\lambda_1\lambda_3\pi_3\pi_1\right), \\
		\alpha &= \pm\left(b\gamma - a\delta\right) = \pm\lambda_1\lambda_3\left(p_{11}\pi_3 - p_{31}\pi_1\right), \\
		\beta &= \pm\left(d\gamma - c\delta\right) = \pm\lambda_1\lambda_3\left(p_{12}\pi_3 - p_{32}\pi_1\right).
	\end{align}
	Now the second line of~\eqref{eq:AutomorphismsOfP11Def} yields
	\begin{align}
		\lambda_3 p_{31} + \lambda_1 p_{11} \pm \epsilon\lambda_1\lambda_3\left(p_{11}\pi_3 - p_{31}\pi_1\right) &= \lambda_2 p_{21}
		\label{eq:AutomorphismsOfP11Def21}\\
		\lambda_3 p_{32} + \lambda_1 p_{12} \pm \epsilon\lambda_1\lambda_3\left(p_{12}\pi_3 - p_{32}\pi_1\right) &= \lambda_2 p_{22}
		\label{eq:AutomorphismsOfP11Def22}\\
		\lambda_3 \pi_3 + \lambda_1\pi_1 \pm \epsilon\left(1 - \lambda_1\lambda_3\pi_3\pi_1\right) &= \lambda_2\pi_2
		\label{eq:AutomorphismsOfP11DefEpsilon}
	\end{align}
	As \(e=\pm\left(1-\lambda_1\lambda_3\pi_3\pi_1\right)\) is invertible, we can solve~\eqref{eq:AutomorphismsOfP11DefEpsilon} for \(\epsilon\):
	\begin{equation}
		\epsilon
		= \mp\left(1+\lambda_1\lambda_3\pi_3\pi_1\right)\left(\lambda_1\pi_1 - \lambda_2\pi_2 + \lambda_3\pi_3\right)
		= \mp\left(\lambda_1\pi_1 - \lambda_2\pi_2 + \lambda_3\pi_3 + \lambda_1\lambda_2\lambda_3\pi_1\pi_2\pi_3\right).
	\end{equation}
	The Equations~\eqref{eq:AutomorphismsOfP11Def21} and~\eqref{eq:AutomorphismsOfP11Def22} are linear in \(\lambda_1\), \(\lambda_3\) up to nilpotent perturbation and its reduction is solvable because \([\Red{p_{11}} : \Red{p_{12}}]\) and \([\Red{p_{21}} : \Red{p_{22}}]\) are different points of~\(\ProjectiveSpace[\C]{1}\).
		Consequently, the Equations~\eqref{eq:AutomorphismsOfP11Def21} and~\eqref{eq:AutomorphismsOfP11Def22} can be solved for \(\lambda_1\), \(\lambda_3\) as functions of~\(\lambda_2\) using expansions in the odd generators of \(\cO_B\).
	It remains to solve
	\begin{equation}
		ad - bc - \gamma\delta = 1
	\end{equation}
	to fix \(\lambda_2\).
	After all substitutions this is a quadratic function in \(\lambda_2\) up to nilpotent terms and can be solved, again, by recursion over the odd generators of \(\cO_B\).
\end{proof}
\begin{rem}
	The order of the points \(p_1\), \(p_2\) and \(p_3\) does matter in Proposition~\ref{prop:LinearMapPrescribed3Points}.
	Permutation of the order of the points leads to a multiplication of the parameter \(\epsilon\) by a power of \(\ic\).
	Indeed, there is an automorphism of \(\ProjectiveSpace[\C]{1|1}\) that is determined by
	\begin{align}
		0 &\mapsto 1_{\ic\epsilon} &
		1_\epsilon &\mapsto 0 &
		\infty &\mapsto \infty
	\end{align}
	as can be seen by the following matrix equation:
	\begin{equation}\label{eq:01inftyTo10Infty}
		\begin{pmatrix}
			0 & 1 & 0\\
			1 & 1 & \epsilon \\
			1 & 0 & 0 \\
		\end{pmatrix}
		\begin{pmatrix}
			\ic & 0 & 0 \\
			-\ic & -\ic & -\epsilon \\
			\ic\epsilon & 0 & 1\\
		\end{pmatrix}
		=
		\begin{pmatrix}
			-\ic & -\ic & -\epsilon \\
			0 & \ic & 0\\
			\ic & 0 & 0 \\
		\end{pmatrix}
	\end{equation}
	Similarly, there is an automorphism of \(\ProjectiveSpace[\C]{1|1}\) such that
	\begin{align}
		0 &\mapsto 0 &
		1_\epsilon &\mapsto \infty &
		\infty &\mapsto 1_{\ic\epsilon}
	\end{align}
	given by
	\begin{equation}\label{eq:01inftyTo0Infty1}
		\begin{pmatrix}
			0 & 1 & 0\\
			1 & 1 & \epsilon \\
			1 & 0 & 0 \\
		\end{pmatrix}
		\begin{pmatrix}
			-\ic & -\ic & -\epsilon \\
			0 & \ic & 0 \\
			0 & -\ic\epsilon & 1\\
		\end{pmatrix}
		=
		\begin{pmatrix}
			0 & \ic & 0\\
			-\ic & 0 & 0 \\
			-\ic & -\ic & -\epsilon \\
		\end{pmatrix}
	\end{equation}
	More automorphisms of \(\ProjectiveSpace[\C]{1|1}\) that map \(\Set{0, 1_\epsilon, \infty}\) to \(\Set{0, 1_{\epsilon'}, \infty}\) can be obtained by composition of the linear maps given in~\eqref{eq:01inftyTo10Infty} and~\eqref{eq:01inftyTo0Infty1} and their inverses.
\end{rem}
\begin{rem}
	The number \(\epsilon\) such that three given \(B\)-points \(p_1\), \(p_2\), \(p_3\) can be mapped to \(0\), \(1_\epsilon\), \(\infty\) is called pseudoinvariant of the triple \((p_1, p_2, p_3)\) in~\cite[Chapter~2,~2.12]{M-TNCG}.
\end{rem}

The next step is to study the set of all superconformal automorphisms of \(\ProjectiveSpace[_\C]{1|1}\) that preserve the three points \(0\), \(1_\epsilon\) and \(\infty\), or more generally map \(0\mapsto 0\), \(1_\epsilon\mapsto1_{\epsilon'}\) and \(\infty\mapsto\infty\).
\begin{prop}\label{prop:UniqueAutomorphismFixing3Points}
	Let \(\Xi\colon \ProjectiveSpace[\C]{1|1}\times B\to \ProjectiveSpace[\C]{1|1}\times B\) be a superconformal automorphism over \(B\) such that
	\begin{align}
		[0:1:0] &\mapsto [0:1:0] &
		[1:1:\epsilon] &\mapsto [1:1:\epsilon'] &
		[1:0:0] &\mapsto [1:0:0]
	\end{align}
	for some odd \(\epsilon\), \(\epsilon'{\left(\in\cO_B\otimes\C\right)}_1\).
	Then \(\epsilon=\pm\epsilon'\).
	If \(\epsilon=\epsilon'=0\) the automorphism \(\Xi\) is either the identity or \(\Xi_-\) from Example~\ref{ex:ReflectionOddDirections}.
	If \(\epsilon\neq 0\) and \(\epsilon=\epsilon'\) the automorphism \(\Xi\) is the identity.
	If \(\epsilon\neq 0\) and \(\epsilon= -\epsilon'\) the automorphism \(\Xi\) is \(\Xi_-\).
\end{prop}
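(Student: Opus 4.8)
The plan is to reduce everything to an explicit computation in the two affine charts of $\ProjectiveSpace[\C]{1|1}$. In the chart $U_1$ with coordinates $(z_1,\theta_1)$ I would write $\Xi^\# z_1 = f(z_1)+\theta_1 g(z_1)$ and $\Xi^\#\theta_1 = \psi(z_1)+\theta_1 h(z_1)$, with $f,h$ even and $g,\psi$ odd; the requirement that $\Xi$ preserve the superconformal structure generated by $\partial_{\theta_1}+\theta_1\partial_{z_1}$ is equivalent to the two relations $g=\psi h$ and $f'=h^2-\psi\psi'$, so $\Xi$ is determined on $U_1$ by the pair $(\psi,h)$ up to a constant of integration for $f$. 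The reduction $\Red{\Xi}$ is an automorphism of $\ProjectiveSpace[\C]{1}$, hence a Möbius transformation, and by hypothesis it fixes the three distinct reduced points $0$, $1$ and $\infty$; therefore $\Red{\Xi}=\id$. In particular $\Xi$ restricts to an automorphism of $U_1$, so $f,g,\psi,h$ are entire in $z_1$ (with coefficients in $\cO_B\otimes\C$), with $\Red{f}(z_1)=z_1$, $\Red{\psi}=0$ and $\Red{h}=\pm 1$.

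Next I would feed in the point conditions. Fixing $0=[0{:}1{:}0]$, which sits at the origin of $U_1$, forces $f(0)=0$ and $\psi(0)=0$. The crucial step is to exploit that $\Xi$ extends holomorphically across $\infty$ and fixes it: rewriting $\Xi^\# z_1$ and $\Xi^\#\theta_1$ in the chart $U_2$ via $z_2=1/z_1$, $\theta_2=\theta_1/z_1$ and demanding holomorphy at $z_2=0$ forces $f(z_1)=\lambda z_1$, $\psi(z_1)=\beta z_1$ and $h(z_1)=c$ constant, with $c^2=\lambda$ from the relation $f'=h^2-\psi\psi'$ (note $\psi\psi'=0$ since $\psi=\beta z_1$). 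The condition that $\Xi$ actually \emph{fix} $\infty$, rather than merely be regular there, then says that the $\theta_2$-coordinate of $\Xi(\infty)$, which works out to $\beta/\lambda$, vanishes, so $\beta=0$. Hence $\Xi^\# z_1=\lambda z_1$ and $\Xi^\#\theta_1=c\theta_1$ with $c^2=\lambda$. I expect this step to be the main obstacle: the holomorphy statement at $\infty$ is a Liouville-type argument that has to be run coefficientwise over a basis of the (finite-dimensional) nilpotent part of $\cO_B$, and one must be careful to justify that an automorphism of $\ProjectiveSpace[\C]{1|1}$ fixing $0$ and $\infty$ really is entire in each chart. (If one instead imports the classification of superconformal automorphisms of $\ProjectiveSpace[\C]{1|1}$ as exactly those induced by $\SpGL_{\cO_B\otimes\C}(2|1)$, cf.\ Corollary~\ref{Cor:AutomorphismsP11AllLinear}, this step reduces to checking that a matrix in $\SpGL_{\cO_B\otimes\C}(2|1)$ stabilizing the lines $[0{:}1{:}0]$ and $[1{:}0{:}0]$ is diagonal with diagonal entries $a$, $a^{-1}$, $\pm1$, acting on $U_1$ by $(z_1,\theta_1)\mapsto(a^2 z_1,\pm a\,\theta_1)$.)

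Finally I would impose $\Xi(1_\epsilon)=1_{\epsilon'}$. Since $1_\epsilon=[1{:}1{:}\epsilon]$ has affine coordinates $(1,\epsilon)$, we get $\Xi(1_\epsilon)=[\lambda{:}1{:}c\epsilon]$, and comparing with $[1{:}1{:}\epsilon']$ gives $\lambda=1$ and $\epsilon'=c\epsilon$. Then $c^2=\lambda=1$, so the nilpotent part of $c$ must vanish and $c\in\{1,-1\}$; hence $\epsilon'=\pm\epsilon$, and $\Xi^\# z_1=z_1$, $\Xi^\#\theta_1=\pm\theta_1$, i.e.\ $\Xi=\id$ when $\epsilon'=\epsilon$ and $\Xi=\Xi_-$ (the automorphism of Example~\ref{ex:ReflectionOddDirections}) when $\epsilon'=-\epsilon$. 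The stated case distinctions then follow: for $\epsilon=\epsilon'=0$ both choices of $c$ are admissible; if $\epsilon\neq0$ and $\epsilon=\epsilon'$ then $c=-1$ would force $2\epsilon=0$, impossible, so $\Xi=\id$; and if $\epsilon\neq0$ and $\epsilon=-\epsilon'$ then $c=1$ would force $2\epsilon=0$, so $\Xi=\Xi_-$.
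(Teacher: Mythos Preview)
Your proposal is correct and follows essentially the same route as the paper: write \(\Xi\) in the chart \(U_1\), impose the superconformal relations \(g=\psi h\) and \(f'=h^2-\psi\psi'\) (the paper's \(\zeta=\xi g\) and \(f'=g^2-\xi\xi'\)), pass to the chart \(U_2\) and use holomorphy at \(z_2=0\) together with the fixed points \(0\) and \(\infty\) to force \(f\), \(\psi\), \(h\) to the simplest possible form, then read off \(\lambda=1\) and \(c=\pm1\) from the condition at \(1_\epsilon\). The paper carries out the Liouville-type step via explicit power-series expansions coefficient by coefficient, which is exactly the ``coefficientwise over a basis of the nilpotent part'' argument you anticipate.

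One caution: your parenthetical alternative, importing Corollary~\ref{Cor:AutomorphismsP11AllLinear} to reduce to a matrix computation in \(\SpGL_{\cO_B\otimes\C}(2|1)\), would be circular here, since in the paper that corollary is \emph{deduced from} the present proposition. Your main argument does not use it, so this does not affect the validity of the proof.
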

\begin{proof}
	We write the automorphism \(\Xi\) in the superconformal coordinates \((z_1, \theta_1)\) as follows:
	\begin{align}
		\Xi^\# z_1 &= f(z_1) + \theta_1\zeta(z_1), &
		\Xi^\# \theta_1 &= \xi(z_1) + \theta_1g(z_1).
	\end{align}
	Here \(f\), \(g\) are even holomorphic functions and \(\xi\), \(\zeta\) are odd holomorphic functions satisfying
	\begin{align}
		f' &= g^2 - \xi\xi', &
		\zeta = \xi g.
	\end{align}
	A coordinate change to \((z_2, \theta_2)\) yields the holomorphic maps
	\begin{align}
		\begin{split}
			\Xi^\#z_2
			&= {\left(\Xi^\# z_1\right)}^{-1}
			= {\left( f(z_1) \left( 1+ \theta_1 \frac{\zeta(z_1)}{f(z_1)}\right)\right)}^{-1} \\
			&= {\left( f(\frac1{z_2}) \left( 1+ \theta_2 \frac{\zeta(\frac1{z_2})}{z_2 f(\frac1{z_2})}\right)\right)}^{-1}
			= \frac1{f(\frac1{z_2})} - \theta_2\frac{\zeta(\frac1{z_2})}{z_2{\left(f(\frac1{z_2})\right)}^2},
		\end{split} \\
		\begin{split}
			\Xi^\#\theta_2
			&= \frac{\Xi^\#\theta_1}{\Xi^\# z_1}
			= \frac{\xi(z_1) + \theta_1g(z_1)}{f(z_1) + \theta_1\zeta(z_1)} \\
			&= \left(\xi(\frac1{z_2}) + \theta_2\frac{g(\frac1{z_2})}{z_2}\right)\left(\frac1{f(\frac1{z_2})} - \theta_2\frac{\zeta(\frac1{z_2})}{z_2{\left(f(\frac1{z_2})\right)}^2}\right) \\
			&= \frac{\xi(\frac1{z_2})}{f(\frac1{z_2})} + \theta_2\frac{g(\frac1{z_2})}{z_2 f(\frac1{z_2})}.
		\end{split}
	\end{align}
	In the last step we have used that \(\xi\zeta= \xi\xi g = 0\).
	In order to show that \(f(z_1)=z_1\), let us expand the holomorphic function~\(f\) in power series around \(z_1=0\):
	\begin{equation}
		f(z_1) = \sum_{n=0}^\infty f_n z_1^n
	\end{equation}
	where the coefficients \(f_n\) are even sections of \(\cO_B\otimes\C\).
	The condition that \(\Xi\) preserves the point \([0:1:0]\) implies \(f(0)=0\), that is, \(f_0=0\).
	Furthermore, as the reduction \(\Red{\Xi}\colon \ProjectiveSpace[\C]{1}\to\ProjectiveSpace[\C]{1}\) preserves the three points \(0\), \(1\) and \(\infty\), the reduction of \(f(z_1)\) has to be~\(z_1\).
	That is, all coefficients \(f_n\) of \(f\) are even nilpotent with the exception of \(f_1\) which is of the form \(f_1=1+{f_1}_{nil}\) for an even nilpotent \({f_1}_{nil}\).
	Consequently, \(f_1\) is invertible and the Laurent series expansion of \({f(z_1)}^{-1}\) is of the form
	\begin{equation}
		{f(z_1)}^{-1}
		= {\left(z_1\left(\sum_{n=0}^\infty f_{n+1}z^n\right)\right)}^{-1}
		= \frac1{z_1}\sum_{n=0}^\infty \overline{f}_{n+1}z^n,
	\end{equation}
	where the coefficients \(\overline{f}_n\) are obtained inductively as follows:
	\begin{align}
		\overline{f}_1 &= \frac1{f_1} &
		\overline{f}_2 &= -\frac{f_2}{f_1^2} &
		\overline{f}_{n+1} &= -\frac1{f_1}\sum_{k=1}^{n} f_{k+1}\overline{f}_{n+1-k}.
	\end{align}
	The function
	\begin{equation}
		\frac1{f(\frac1{z_2})}
		= z_2\sum_{n=0}^\infty\overline{f}_{n+1} z_2^{-n}
		= \frac1{f_1}z_2 - \frac{f_2}{f_1^2} + \overline{f}_3\frac1{z_2} + \overline{f}_4\frac1{z_2^2} + \dotsb
	\end{equation}
	is holomorphic around \(z_2=0\), that is, the \(\overline{f}_{n+1}\) have to vanish for \(n\geq2\).
	The automorphism \(\Xi\) preserves the point \([1:0:0]\) and hence \(f_2=0\) which implies \(f_n=0\) for \(n>2\).

	A similar argument applies to \(\xi\).
	Let
	\begin{equation}
		\xi(z_1) = \sum_{n=0}^\infty \xi_n z_1^n
	\end{equation}
	be the power series expansion of \(\xi\).
	All coefficients \(\xi_n\) are odd sections of \(\cO_B\otimes\C\).
	The automorphism \(\Xi\) preserves the points \([0:1:0]\) and \([1:0:0]\) and hence \(\xi_0=0\) and
	\begin{equation}
		\frac{\xi(\frac1{z_2})}{f(\frac1{z_2})}
		= \left(\sum_{n=1}^\infty \xi_n z^{-n}\right)\frac1{f_1}z_2
		= \frac{\xi_1}{f_1} + \frac{\xi_2}{f_1}\frac1{z_2} + \frac{\xi_3}{f_1}\frac1{z_2^2} + \dotsb
	\end{equation}
	is holomorphic and vanishes at \(z_2=0\).
	This implies \(\xi=0\) and hence \(\zeta=0\).

	As \(f'(z_1)=f_1 = g^2\) we have \(g=\pm\sqrt{f_1}\).
	The fact that \(\Xi\) maps \([1:1:\epsilon]\) to \([1:1:\epsilon']\) implies now
	\begin{align}
		1 &= f_1, &
		\epsilon' &= \pm\epsilon\sqrt{f_1},
	\end{align}
	and the claim follows.
\end{proof}

\begin{cor}\label{Cor:AutomorphismsP11AllLinear}
	Any superconformal automorphism of \(\ProjectiveSpace[\C]{1|1}\times B\) over \(B\) is induced from a matrix in \(\SpGL_{\cO_B\otimes\C}(2|1)\).
\end{cor}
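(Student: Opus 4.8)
The plan is to bring an arbitrary superconformal automorphism $\Xi$ of $\ProjectiveSpace[\C]{1|1}\times B$ into normal form using the two preceding propositions. First I would record the images $p_1$, $p_2$, $p_3$ of the standard $B$-points $0$, $1$, $\infty$ under $\Xi$. Since $\Xi$ is in particular a superdiffeomorphism, its reduction $\Red{\Xi}$ is a biholomorphism of $\ProjectiveSpace[\C]{1}$ (over $\Red{B}$), hence injective on points; as the reductions of $0$, $1$ and $\infty$ are already distinct, so are the reductions of $p_1$, $p_2$, $p_3$. This places us exactly in the hypotheses of Proposition~\ref{prop:LinearMapPrescribed3Points}, which yields a matrix $L\in\SpGL_{\cO_B\otimes\C}(2|1)$ and an odd $\epsilon\in\cO_B\otimes\C$ whose induced superconformal automorphism $l$ maps $0$ to $p_1$, $1_\epsilon$ to $p_2$ and $\infty$ to $p_3$.

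Next I would pass to the composition $\Psi = l^{-1}\circ\Xi$. Because superconformal automorphisms of $\ProjectiveSpace[\C]{1|1}\times B$ over $B$ are closed under composition and inversion, and $l^{-1}$ is the superconformal automorphism induced by $L^{-1}\in\SpGL_{\cO_B\otimes\C}(2|1)$, the map $\Psi$ is again a superconformal automorphism. By construction $\Psi$ fixes $0$ and $\infty$ and sends $1 = 1_0$ to $1_\epsilon$. Applying Proposition~\ref{prop:UniqueAutomorphismFixing3Points} to $\Psi$, with the parameter called $\epsilon$ there taken to be $0$ and the one called $\epsilon'$ taken to be our $\epsilon$, gives first $0 = \pm\epsilon$, hence $\epsilon = 0$, and then, since both parameters now vanish, that $\Psi$ is either $\id_{\ProjectiveSpace[\C]{1|1}\times B}$ or the reflection $\Xi_-$ of Example~\ref{ex:ReflectionOddDirections}.

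Finally I would combine these observations: $\Xi = l\circ\Psi$ equals either $l$ or $l\circ\Xi_-$. In the first case $\Xi$ is induced by $L$. In the second, $\Xi_-$ is itself induced by the diagonal matrix of Example~\ref{ex:ReflectionOddDirections}, which a one-line check shows satisfies the relations~\eqref{eq:Sp21}, so $\Xi$ is induced by the matrix product of $L$ with it; this product again lies in $\SpGL_{\cO_B\otimes\C}(2|1)$ since matrix multiplication preserves that group. Either way $\Xi$ is induced from a matrix in $\SpGL_{\cO_B\otimes\C}(2|1)$. I do not expect a genuine obstacle here: everything is driven by the two propositions, and the only points deserving a sentence of care are that the matrix-induced automorphisms form a subgroup of the superconformal automorphisms (so that $l^{-1}$, and $l\circ\Xi_-$, are again of the required form) and the careful matching of which of $\epsilon$, $\epsilon'$ in Proposition~\ref{prop:UniqueAutomorphismFixing3Points} plays the role of source and which of target.
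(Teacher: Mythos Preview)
Your proposal is correct and follows essentially the same approach as the paper: normalize \(\Xi\) via Proposition~\ref{prop:LinearMapPrescribed3Points}, then identify the residual automorphism using Proposition~\ref{prop:UniqueAutomorphismFixing3Points}. The only cosmetic difference is that the paper works with the \emph{preimages} of \(0,1,\infty\) under \(\Xi\) and studies \(l\circ\Xi\), whereas you use the \emph{images} and study \(l^{-1}\circ\Xi\); both choices lead to the same dichotomy \(\Xi\in\{l^{\pm1},\,l^{\pm1}\circ\Xi_-\}\).
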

\begin{proof}
	Denote the preimages of the \(B\)-points \([0:1:0]\), \([1:1:0]\) and \([1:0:0]\) under the automorphism \(\Xi\) by \(p_1\), \(p_2\) and \(p_3\).
	By Proposition~\ref{prop:LinearMapPrescribed3Points} there is an automorphism \(l\) induced from \(L\in\SpGL_{\cO_B\otimes\C}(2|1)\) such that \(l\) maps
	\begin{align}
		[0:1:0] &\mapsto p_1, &
		[1:1:0] &\mapsto p_2, &
		[1:0:0] &\mapsto p_3.
	\end{align}
	By Proposition~\ref{prop:UniqueAutomorphismFixing3Points}, the map \(l\circ \Xi\) is either the identity or the reflection of the odd direction~\(\Xi_-\).
	Consequently, \(\Xi\) is either \(l^{-1}\) or \(l^{-1}\circ\Xi_-\) which are both induced by a matrix from \(\SpGL_{\cO_B\otimes\C}(2|1)\).
\end{proof}
Corollary~\ref{Cor:AutomorphismsP11AllLinear} is a special case of the result that any holomorphic, not necessarily superconformal, automorphism of \(\ProjectiveSpace[\C]{m|1}\) is induced by a linear map, see~\cite[Proposition~4.4]{FK-PLSGSPAP11}.

\subsection{Marked points on \texorpdfstring{\(\ProjectiveSpace[\C]{1|1}\)}{PC11}}\label{sec:MarkedPointsOnP11}
In this section we will describe the open moduli space \(\mathcal{M}_{0,k}\) of super Riemann surfaces of genus zero with \(k\geq 3\) distinct marked points as a superorbifold using the language developed in Section~\ref{sec:Superorbifolds}.

By the uniformization of super Riemann surfaces, see~\cites{CR-SRSUTT}[Theorem~9.4.1]{EK-SGSRSSCA}, we know that \(\ProjectiveSpace[\C]{1|1}\) is the only super Riemann surface of genus zero.
In particular, any family of super Riemann surfaces over \(B\) is the trivial family \(B\times \ProjectiveSpace[\C]{1|1}\to B\).
A marked point or Neveu--Schwarz puncture on \(\ProjectiveSpace[\C]{1|1}\) is a \(B\)-point \(B\to \ProjectiveSpace[\C]{1|1}\).
Hence, the moduli space of super Riemann surfaces of genus zero with \(k\) distinct marked points is the space of \(k\) distinct points on \(\ProjectiveSpace[\C]{1|1}\) up to automorphisms of \(\ProjectiveSpace[\C]{1|1}\).

More precisely, let
\begin{equation}
	Z_k \subset{\left(\ProjectiveSpace[\C]{1|1}\right)}^k
	=\underbrace{\ProjectiveSpace[\C]{1|1}\times\ProjectiveSpace[\C]{1|1}\times\dotsm \times\ProjectiveSpace[\C]{1|1}}_{k\text{ times}}
\end{equation}
be the open subsupermanifold where no two of the reduced projections coincide.
The \(\R^{0|0}\)-points \(\underline{Z_k}(\R^{0|0})\) are \(k\)-tuples of distinct points of \(\ProjectiveSpace[\C]{1}\).
The \(C\)-points \(\underline{Z_k}(C)\) are \(k\)-tuples of \(C\)-points of \(\ProjectiveSpace[\C]{1|1}\) such that their reduction are distinct.
The group \(\SpGL_{\C}(2|1)\) acts on \(Z_k\) diagonally.
This action is free and has as isotropy group \(\Z_2\) on all \(\R^{0|0}\)-points of \(Z_k\).
Consequently, by Theorem~\ref{thm:QuotientByFiniteIsotropyProperGroupAction}, we know that \(\faktor{Z_k}{\SpGL_{\C}(2|1)}\) is a superorbifold of real dimension
\begin{equation}
	\dim Z_k - \dim \SpGL_{\C}(2|1)
	= 2k|2k - 6|4
	= 2k-6|2k-4.
\end{equation}
The action of \(\SpGL_{\C}(2|1)\) on \(Z_k\) respects the complex structure and hence the quotient can be equipped with the structure of a complex orbifold.

To describe the orbifold \(\faktor{Z_k}{\SpGL_{\C}(2|1)}\) further, let \(\tilde{Z}_{k-3}\subset Z_{k-3}\) the open submanifold such that no reduced point coincides with \(0\), \(1\) or \(\infty\).
We claim that
\begin{equation}
	\faktor{Z_k}{\SpGL_{\C}(2|1)} \simeq \faktor{\left(\C^{0|1}\times \tilde{Z}_{k-3}\right)}{\Z_2}.
\end{equation}
This isomorphism is induced from the map \(f\colon \C^{0|1}\times \tilde{Z}_{k-3}\to Z_k\) given by
\begin{equation}
	\begin{split}
		\underline{f}(C)\colon \underline{\C^{0|1}\times \tilde{Z}_{k-3}}(C) &\to \underline{Z_k}(C) \\
		\left(\epsilon, (p_1, \dotsc, p_{k-3})\right) &\mapsto \left(0, 1_\epsilon, \infty, p_1, \dotsc, p_{k-3}\right)
	\end{split}
\end{equation}
Here, we parametrize \(C\)-points \(p\colon C\to \C^{0|1}\) by \(p^\#\eta=\epsilon\in{\left(\cO_C\otimes\C\right)}_1\), where \(\eta\) is a fixed coordinate of \(\C^{0|1}\).
The group \(\Z_2\) acts on \(\C^{0|1}\times \tilde{Z}_{k-3}\) by reflection of odd directions, see Example~\ref{ex:Z2ActionsOnRmn}, and the map \(f\) is \(\rho\)-equivariant, where \(\rho\colon \Z_2\to \SpGL_{\C}(2|1)\) is the group homomorphism that sends the generator of \(\Z_2\) to \(\Xi_-\).
Propositions~\ref{prop:LinearMapPrescribed3Points} and~\ref{prop:UniqueAutomorphismFixing3Points} show that the resulting homomorphism of super Lie groupoids
\begin{equation}
	F\colon \Z_2\ltimes\left(\C^{0|1}\times\tilde{Z}_{k-3}\right)\to \SpGL_{\C}(2|1)\ltimes Z_k
\end{equation}
is a weak equivalence.
This proves the following proposition:
\begin{prop}\label{prop:ModuliSpaceOfSRS0k}
	The moduli space of super Riemann surfaces of genus zero and \(k\geq 3\) marked points is an orbifold of real dimension \(2k-6|2k-4\) and given by
	\begin{equation}
		\mathcal{M}_{0,k}=\faktor{\left(\C^{0|1}\times\tilde{Z}_{k-3}\right)}{\Z_2}
	\end{equation}
	where \(\tilde{Z}_{k-3}\subset{\left(\ProjectiveSpace[\C]{1|1}\right)}^k\) is the open submanifold such that no two reductions of the projections coincide with each other nor with the points \(0\), \(1\) and \(\infty\).
	The \(\Z_2\)-action is given by the reflection in the odd directions.
\end{prop}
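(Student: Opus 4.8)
The plan is to first realise the moduli space as the quotient superorbifold $\faktor{Z_k}{\SpGL_{\C}(2|1)}$ by Theorem~\ref{thm:QuotientByFiniteIsotropyProperGroupAction}, and then to identify this quotient with the model $\faktor{(\C^{0|1}\times\tilde{Z}_{k-3})}{\Z_2}$ by exhibiting a weak equivalence between the two transformation groupoids. For the first step one checks that the diagonal action of $G=\SpGL_{\C}(2|1)$ on $Z_k$ is proper with finite isotropy. Properness is a condition on the reduction: the reduced action is that of $\underline{G}(\R^{0|0})=\mathrm{SL}(2,\C)\times\Z_2$ on the configuration space of $k\geq 3$ distinct points of $\ProjectiveSpace[\C]{1}$, which factors through $\mathrm{PSL}(2,\C)$ with finite kernel, and $\mathrm{PSL}(2,\C)$ acts properly on configuration spaces of at least three distinct points (being simply transitive on ordered triples). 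For the isotropy, Corollary~\ref{Cor:AutomorphismsP11AllLinear} identifies superconformal automorphisms of $\ProjectiveSpace[\C]{1|1}$ with elements of $\SpGL_{\cO_B\otimes\C}(2|1)$, and Propositions~\ref{prop:LinearMapPrescribed3Points} and~\ref{prop:UniqueAutomorphismFixing3Points} show that an automorphism fixing three points with distinct reductions is $\id$ or $\Xi_-$; hence the isotropy group of every $B$-point of $Z_k$ is a subgroup of the finite group $\Z_2=\{\id,\Xi_-\}$, equal to $\Z_2$ at reduced points. Theorem~\ref{thm:QuotientByFiniteIsotropyProperGroupAction} then gives that $\faktor{Z_k}{G}$ is a superorbifold of dimension $\dim Z_k-\dim G=(2k|2k)-(6|4)=(2k-6|2k-4)$, and since the $G$-action preserves the complex structure the quotient carries a complex orbifold structure.

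For the second step, let $\rho\colon\Z_2\to\SpGL_{\C}(2|1)$ send the generator to $\Xi_-$ (Example~\ref{ex:ReflectionOddDirections}) and let $\Z_2$ act on $\C^{0|1}\times\tilde{Z}_{k-3}$ by reflection of the odd directions (Example~\ref{ex:Z2ActionsOnRmn}); on the $\ProjectiveSpace[\C]{1|1}$-factors this is exactly the action induced by $\Xi_-$. The map $f\colon\C^{0|1}\times\tilde{Z}_{k-3}\to Z_k$ sending $C$-points $(\epsilon,(p_1,\dotsc,p_{k-3}))$ to $(0,1_\epsilon,\infty,p_1,\dotsc,p_{k-3})$ is then $\rho$-equivariant, so it induces a homomorphism of super Lie groupoids $F\colon\groupoid{G}=\Z_2\ltimes(\C^{0|1}\times\tilde{Z}_{k-3})\to\groupoid{H}=G\ltimes Z_k$ with $F_0=f$ and $F_1=\rho\times f$. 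It remains to verify the two conditions of Definition~\ref{defn:LieGroupoidHomomorphism}. Condition~\ref{item:defn:LieGroupoidEquivalence:SurjectiveSubmersion} amounts to the map $\groupoid{H}_1\times_{s,F_0}\groupoid{G}_0=G\times_B(\C^{0|1}\times\tilde{Z}_{k-3})\to Z_k$, $(g,\epsilon,p)\mapsto g\cdot f(\epsilon,p)$, being a surjective submersion: surjectivity on $C$-points is Proposition~\ref{prop:LinearMapPrescribed3Points}, which moves the first three points of an arbitrary configuration to $0$, $1_\epsilon$, $\infty$, and submersivity is the transversality of the image of $f$ to the $G$-orbits, the infinitesimal content of Proposition~\ref{prop:UniqueAutomorphismFixing3Points}; since source and target have the same dimension the map is then étale. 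Condition~\ref{item:defn:LieGroupoidEquivalence:FiberProduct} asks that $\groupoid{H}_1\times_{(s,t),F_0\times F_0}(\groupoid{G}_0\times\groupoid{G}_0)$, which is the set of triples $(g,y_1,y_2)$ with $g\cdot f(y_1)=f(y_2)$, be superdiffeomorphic to $\groupoid{G}_1=\Z_2\times(\C^{0|1}\times\tilde{Z}_{k-3})$; comparing the first three entries shows such a $g$ maps $(0,1_{\epsilon_1},\infty)$ to $(0,1_{\epsilon_2},\infty)$, so by Proposition~\ref{prop:UniqueAutomorphismFixing3Points} either $g=\id$ and $y_2=y_1$, or $g=\Xi_-$ and $y_2$ is the odd reflection of $y_1$, which is precisely $\groupoid{G}_1$, the identification being smooth with smooth inverse by the explicit formulas. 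Hence $F$ is a weak equivalence, $\faktor{Z_k}{G}$ and $\faktor{(\C^{0|1}\times\tilde{Z}_{k-3})}{\Z_2}$ lie in the same Morita equivalence class, and together with the dimension count this is the assertion of the proposition.

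The step I expect to be the main obstacle is the submersivity, hence étaleness, in Condition~\ref{item:defn:LieGroupoidEquivalence:SurjectiveSubmersion}: one must establish transversality of the normal-form locus $\{(0,1_\epsilon,\infty,\cdot)\}$ to the orbits over an arbitrary base $B$, controlling the nilpotent directions, which is exactly where Proposition~\ref{prop:UniqueAutomorphismFixing3Points} together with the recursion-over-odd-generators arguments of Proposition~\ref{prop:LinearMapPrescribed3Points} does the real work. A secondary subtlety, easily handled, is that the isotropy functor $H_p$ is a genuine super Lie subgroup only at $B$-points with vanishing odd part (compare Example~\ref{ex:ActionTranslationNoLieSubGroup}); but the $B$-points in the construction underlying Lemma~\ref{lemma:FiniteIsotropyActionProperEtaleGroupoid} may be taken reduced, whose slices still cover $Z_k$, so $H_{p_\alpha}=\Z_2$ throughout.
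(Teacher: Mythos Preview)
Your proposal is correct and follows essentially the same route as the paper: realise the moduli space as \(\faktor{Z_k}{\SpGL_\C(2|1)}\) via Theorem~\ref{thm:QuotientByFiniteIsotropyProperGroupAction}, then construct the \(\rho\)-equivariant map \(f\colon \C^{0|1}\times\tilde{Z}_{k-3}\to Z_k\) and show that the induced groupoid homomorphism is a weak equivalence by invoking Propositions~\ref{prop:LinearMapPrescribed3Points} and~\ref{prop:UniqueAutomorphismFixing3Points}. The paper's own argument is terser---it simply asserts that those two propositions yield the weak equivalence---whereas you spell out both conditions of Definition~\ref{defn:LieGroupoidHomomorphism} and flag the submersivity check; this is added detail rather than a different approach.
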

Note that by construction the space of \(\R^{0|0}\)-orbits \(\underline{\mathcal{M}_{0,k}}(\R^{0|0})\) is homeomorphic to~\(M_{0,k}\), the moduli space of Riemann surfaces of genus zero with \(k\) markings.
This ignores the \(\Z_2\)-isotropy group on the \(\R^{0|0}\)-points which multiplies the odd directions of a super Riemann surface by \(-1\).

\subsection{Stable supercurves of genus zero}\label{sec:StableSRSofGenusZero}
In this section, we will construct nodal supercurves of genus zero as trees of bubbles, similar to what is outlined in~\cite[Appendix~D.3]{McDS-JHCST}.
Every vertex of the tree represents one copy of \(\ProjectiveSpace[\C]{1|1}\) with marked points.
A sufficient number of markings guarantees that the nodal supercurve of genus zero is stable, that is, allows only a finite number of automorphisms.
We prove that the moduli space \(\mathcal{M}_{0,T}\) of stable supercurves modeled on a fixed tree \(T\) forms a superorbifold.

We will follow the notation of~\cite[Appendix~D]{McDS-JHCST}:
A tree is a connected graph without cycles.
We represent the tree by a set of vertices \(T\) and the edges as a subset \(E\subset T\times T\).
For \((\alpha, \beta)\in E\) we also write \(E_{\alpha\beta}\) and require that \(E_{\alpha\beta}\) if and only if \(E_{\beta\alpha}\), that is, the graph is undirected.
A tree satisfies \(\#T=\#E+1\).
A \(k\)-labeling is given by a map
\begin{equation}
	\begin{split}
		p\colon \Set{1, 2, \dotsc, k} &\to T \\
		i &\mapsto p_i
	\end{split}
\end{equation}
The \(k\)-labeled tree is called stable if for every vertex \(\alpha\) the number of outgoing edges plus the number of markings at \(\alpha\) is at least three:
\begin{equation}
	\#p^{-1}(\alpha) + \#\Set{\beta\in T\given E_{\alpha\beta}}\geq 3.
\end{equation}
A homomorphism of \(k\)-labeled trees \(f\colon (T, E, p)\to (\tilde{T}, \tilde{E}, \tilde{p})\) consist of a map \(f\colon T\to \tilde{T}\) such that for any edge \(E_{\alpha\beta}\) also \(\tilde{E}_{f(\alpha)f(\beta)}\) is an edge and \(\tilde{p}_i = f(p_i)\).
A homomorphism of \(k\)-labeled trees is called isomorphism if it is invertible.
In the following we will identify isomorphic labeled trees.
That is, when we speak of a \(k\)-labeled tree, we actually think of its isomorphism class.

\begin{defn}\label{defn:NodalSuperCurve}
	Let \(T\) be a \(k\)-labeled tree as above.
	A nodal supercurve of genus zero over \(B\), modeled on \(T\) is a tuple
	\begin{equation}
		\bm{z} = \left({\Set{z_{\alpha\beta}}}_{E_{\alpha\beta}}, {\Set{z_i}}_{1\leq i\leq k}\right)
	\end{equation}
	consisting of \(B\)-points \(z_{\alpha\beta}\colon B\to \ProjectiveSpace[\C]{1|1}\) and \(z_i\colon B\to \ProjectiveSpace[\C]{1|1}\) such that for every \(\alpha\in T\) the reduction of the points \(z_{\alpha\beta}\) and \(z_i\) for \(p(i)=\alpha\) are disjoint.
	The \(z_{\alpha\beta}\) are called nodal points and \(z_i\) are marked points.
	Together the nodal points and marked points form the set of special points
	\begin{equation}
		Y_\alpha = \Set{z_{\alpha\beta}\given E_{\alpha\beta}}\cup \Set{z_i\given p(i)=\alpha}.
	\end{equation}

	The nodal supercurve of genus zero modeled on \(T\) is called a stable supercurve modeled on \(T\) if \(T\) is a stable \(k\)-labeled tree.
\end{defn}
We think of a nodal supercurve of genus zero modeled on \(T\) as a tree of projective superspaces \(\ProjectiveSpace[\C]{1|1}\) such that every vertex corresponds to a separate copy of \(\ProjectiveSpace[\C]{1|1}\).
If \(E_{\alpha\beta}\) is an edge, the point \(z_{\alpha\beta}\) of the \(\ProjectiveSpace[\C]{1|1}\) touches the point \(z_{\beta\alpha}\) in the copy of \(\ProjectiveSpace[\C]{1|1}\) at \(\beta\).
The marked point \(p_i\) is in the copy of \(\ProjectiveSpace[\C]{1|1}\) sitting at \(\alpha=p(i)\).
Note that the condition of stability is equivalent to \(\#Y_\alpha\geq 3\) for all \(\alpha\in T\).

\begin{defn}\label{defn:ReparametrizationGroupOfNodalCurves}
	We call the super Lie group
	\begin{equation}
		G^T=\prod_{\alpha\in T} \SpGL_{\C}(2|1)
	\end{equation}
	the reparametrization group of nodal supercurves of genus zero modeled on~\(T\).
	An element \(\bm{g}=\Set{g_\alpha}\in\underline{G^T}(B)\) acts on a supercurve \(\bm{z}=(\Set{z_{\alpha\beta}}, \Set{z_i})\) of genus zero over \(B\) modeled on \(T\) by
	\begin{equation}
		\bm{g}(\bm{z}) = \left(\Set{g_\alpha(z_{\alpha\beta})}, \Set{g_{p(i)}(z_i)}\right).
	\end{equation}

	Two stable supercurves over \(B\) of genus zero modeled on \(T\) are considered to be equivalent if they differ by a reparametrization.
	We denote the set of equivalence classes by \(\underline{\mathcal{M}_{0,T}}(B)\).
\end{defn}

The stability condition implies that a stable supercurve of genus zero modeled on \(T\) has only finitely many automorphisms, that is, equivalence transformations fixing the special points.
More precisely, the automorphism group of a stable supercurve of genus zero modeled on \(T\) is a subgroup of \(\Z_2^{\#T}\) because every vertex contains at least three special points and hence the remaining automorphism group at that vertex is either \(\Z_2\) or the trivial group.

The equivalence classes \(\underline{\mathcal{M}_{0,T}}(B)\) of stable supercurves over \(B\) of genus zero modeled on \(T\) form a functor
\begin{equation}
	\underline{\mathcal{M}_{0,T}}\colon \cat{SPoint}^{op}\to \cat {Sets}.
\end{equation}
Moreover, as the group of equivalence transformations acts diagonally on the different nodes, we can equip this functor with the structure of a superorbifold
\begin{equation}
	\mathcal{M}_{0,T} = \prod_{\alpha\in T} \mathcal{M}_{0, \#Y_\alpha}.
\end{equation}
Here, every factor is a moduli space of supercurves of genus zero with \(\#Y_\alpha\geq 3\) markings.
Consequently, \(\mathcal{M}_{0,T}\) is a quotient superorbifold of dimension
\begin{equation}
	\begin{split}
		\dim \mathcal{M}_{0,T}
		&= \sum_{\alpha\in T} \dim\mathcal{M}_{0, \#Y_\alpha}
		= \sum_{\alpha\in T} \left(2\#Y_\alpha - 6\right) | \sum_{\alpha\in T} \left(2\#Y_\alpha - 4\right) \\
		&= 4\#E + 2k - 6\#T | 4\#E + 2k - 4\#T \\
		&= 2k - 6 - 2\#E| 2k-4
	\end{split}
\end{equation}
and isotropy groups of the form \(\Z_2^{\# T}\) at all \(\R^{0|0}\) points.
The topological space \(\underline{\mathcal{M}_{0,T}}(\R^{0|0})\) is homeomorphic to the moduli space \(M_{0,T}\) of Riemann surfaces of genus zero modeled on the labeled tree \(T\) as discussed in~\cite[Appendix~D.3]{McDS-JHCST}.

We now define the set-valued point functor \(\underline{\overline{\mathcal{M}}_{0,k}}\) of the moduli space of stable supercurves of genus zero by taking the union over all isomorphism classes of \(k\)-labeled trees:
\begin{equation}\label{eq:DefinitionM0kBar}
	\begin{split}
		\underline{\overline{\mathcal{M}}_{0,k}}\colon \cat{SPoint}^{op}&\to \cat{Sets} \\
		C &\mapsto \bigcup_{T\text{ \(k\)-labeled tree}} \underline{\mathcal{M}_{0,T}}(C)
	\end{split}
\end{equation}
Points \(p\in\underline{\overline{\mathcal{M}}_{0,k}}(C)\) correspond to stable supercurves over \(C\) of genus zero with \(k\) markings.
Any such stable supercurve, up to equivalence, is represented by a unique \(C\)-point.
By construction, \(\underline{\overline{\mathcal{M}}_{0,k}}(\R^{0|0})\) is in bijection to the moduli space~\(\overline{M}_{0,k}\) of stable Riemann surfaces of genus zero and \(k\) marked points.
Note also that the union in Equation~\eqref{eq:DefinitionM0kBar} is a finite union because the number of stable \(k\)-labeled trees is finite.

We now refine the functor \(\underline{\overline{\mathcal{M}}_{0,k}}\) to take values in topological spaces.
The following is a generalization of the Gromov convergence for stable curves, see~\cite[Definition~D.5.1]{McDS-JHCST} to the case of super stable curves:
\begin{defn}\label{defn:GromovConvergenceStableCurves}
	A sequence \(\bm{z}^\nu=(z_{\alpha\beta}^\nu, z_i^\nu)\) of stable supercurves over \(B\) of genus zero modeled on \(T^\nu\) is said to Gromov converge to to a stable supercurve \(\bm{z}=(z_{\alpha\beta}, z_i)\) over \(B\) of genus zero and modeled on \(T\) if for \(\nu\) sufficiently large there exists a tree homomorphism \(f^\nu\colon T\to T^\nu\) and a collection of reparametrizations \(\bm{g}^\nu = {\Set{g^\nu_\alpha}}_{\alpha\in T}\in\underline{G^{T}}(B)\) sucht that the following hold:
	\begin{description}
		\item[(Rescaling)]
			If \(\alpha\), \(\beta\in T\) are connected by an edge \(E_{\alpha\beta}\) and \(\nu_j\) is a subsequence such that \(f^{\nu_j}(\alpha) = f^{\nu_j}(\beta)\) then the sequence
			\begin{equation}
				\underline{{\left(g_\alpha^{\nu_j}\right)}^{-1}\circ g_\beta^{\nu_j}}(C)\colon \underline{\ProjectiveSpace[\C]{1|1}}(C)\to \underline{\ProjectiveSpace[\C]{1|1}}(C)
			\end{equation}
			converges uniformly on compact subsets to \(C_B^*z_{\alpha\beta}\in\underline{\ProjectiveSpace[\C]{1|1}}(C)\) for all \(C_B\colon C\to B\) and \(j\to\infty\).
		\item[(Nodal points)]
			If \(\alpha\), \(\beta\in T\) are connected by an edge \(E_{\alpha\beta}\) and \(\nu_j\) is a subsequence such that \(f^{\nu_j}(\alpha)\neq f^{\nu_j}(\beta)\) then
			\begin{equation}
				z_{\alpha\beta} = \lim_{j\to\infty}{\left(g_\alpha^{\nu_j}\right)}^{-1} (z^{\nu_j}_{f^{\nu_j}(\alpha)f^{\nu_j}(\beta)})
				\in\underline{\ProjectiveSpace[\C]{1|1}}(B).
			\end{equation}
		\item[(Marked Points)]
			For all~\(1\leq i\leq k\) it holds \(p^\nu(i)=f^\nu(p(i))\) and
			\begin{equation}
				z_i = \lim_{\nu\to\infty}{(g_{p(i)}^\nu)}^{-1}(z_i^\nu)
				\in \underline{\ProjectiveSpace[\C]{1|1}}(B).
			\end{equation}
	\end{description}
\end{defn}
Gromov convergence is defined up to reparametrizations.
That is, if \(\bm{z}^\nu\) Gromov converges to \(\bm{z}\) also \(\bm{g}^\nu\cdot\bm{z}^\nu\) converges to \(\bm{g}\cdot\bm{z}\) for arbitrary sequences \(\bm{g}^\nu\in \underline{G^{T^\nu}}(B)\) and \(\bm{g}\in\underline{G^T}(B)\).
Hence, we say that a subset \(U\in \underline{\overline{\mathcal{M}}_{0,k}}(C)\) is Gromov closed if the limit of any Gromov convergent sequence \(\bm{z}^\nu\in U\) also lies in \(U\).
The Gromov closed subsets define a topology, called Gromov topology, on \(\underline{\overline{\mathcal{M}}_{0,k}}(C)\) such that any Gromov converging sequence converges also with respect to the Gromov topology.
As the definition of Gromov convergence and hence the definition of Gromov topology is functorial, we obtain a functor
\begin{equation}
	\underline{\overline{\mathcal{M}}_{0,k}}\colon \cat{SPoint}^{op}\to \cat{Top} \\
\end{equation}
taking values in the category of topological spaces which satisfies:
\begin{itemize}
	\item
		By construction, the Gromov topology on \(\underline{\overline{\mathcal{M}}_{0,k}}(\R^{0|0})\) coincides with the Gromov topology constructed in~\cite[Appendix D.5]{McDS-JHCST}.
		Consequently, \(\underline{\overline{\mathcal{M}}_{0,k}}(\R^{0|0})\) is compact.
	\item
		The restriction of the Gromov topology on \(\underline{\overline{\mathcal{M}}_{0,k}}(C)\) to \(\underline{\mathcal{M}_{0,T}}(C)\) is equivalent to the topology obtained from the orbit functor of the superorbifold \(\mathcal{M}_{0,T}\) for all \(k\)-labeled trees \(T\) and superpoints \(C\).
\end{itemize}
We expect that \(\underline{\overline{\mathcal{M}}_{0,k}}\) is the orbit functor of a superorbifold.
The moduli space of stable supercurves of arbitrary genus has been constructed as a smooth super Deligne--Mumford stack in~\cites{D-LaM}{FKP-MSSCCLB} with methods of algebraic geometry.

\begin{rem}
	We conjecture uniqueness up to reparametrization of the limit of a Gromov converging sequence of super stable curves also in the case \(C\neq\R^{0|0}\).
	Uniqueness of the limit would imply that \(\underline{\overline{\mathcal{M}}_{0,k}}(C)\) is Hausdorff and that convergence with respect to the Gromov topology is equivalent to Gromov convergence, see~\cite[Lemma~5.6.4]{McDS-JHCST}.

	We do not expect the spaces \(\underline{\overline{\mathcal{M}}_{0,k}}(C)\) to be compact for \(C\neq\R^{0|0}\).
	The space of higher superpoints is not compact, even if the underlying topological space of the supermanifold in the ringed space approach is compact.
	This can already be seen for
	\begin{equation}
		\overline{\mathcal{M}}_{0,3}
		=\mathcal{M}_{0,3}
		=\faktor{\C^{0|1}}{\Z_2}.
	\end{equation}
\end{rem}
\begin{rem}[Neveu--Schwarz punctures and Ramond punctures]\label{rem:NSPuncturesAndRPunctures}
	In~\cites{D-LaM}{FKP-MSSCCLB} and also in the more physics oriented literature, for example in~\cite{W-NSRSTM}, two types of punctures or marked points are considered:
	Neveu--Schwarz punctures on a super Riemann surface \(M\) are points \(p\colon B\to M\) and given with respect to local superconformal coordinates \((z, \theta)\) by \(z=a\) and \(\theta=\alpha\) for some \(a\in{\left(\cO_{B}\right)}_0\) and \(\alpha\in {\left(\cO_B\right)}_1\).
	Ramond punctures are of a different type.
	They are divisors of complex codimension \(1|0\),  where the map \(\cD\otimes \cD\to \faktor{\tangent{M}}{\cD}\) is allowed to be not injective.
	More precisely it is required that there is an isomorphism
	\begin{equation}
		\cD\otimes \cD\simeq \faktor{\tangent{M}}{\cD}\left(-R\right),
	\end{equation}
	where \(R\) is the divisor given by the disjoint union of all Ramond punctures.
	Locally, a Ramond puncture is given by the equation \(z=a\) for some \(a\in{\left(\cO_B\right)}_0\) and the distribution \(\cD\) is generated by the vector field \(\partial_\theta + \theta\left(z-a\right)\partial_z\).
	In~\cites{D-LaM} it is shown that both Neveu--Schwarz nodes and Ramond nodes can arise under deformations of super Riemann surfaces.

	Let us give a brief argument why in the case of supercurves of genus zero with only Neveu--Schwarz punctures no Ramond nodes can arise.
	Assume that \(M\) a nodal supercurve of genus zero with \(k\) Neveu--Schwarz marked points where some of the nodes are Ramond nodes.
	Then \(M\) has a decomposition into a tree of irreducible components, all of which are of genus zero and some of them carry Ramond punctures as nodal points.
	As all nodes are simple nodes we must have a vertex of this tree that has a single Ramond puncture.
	But this is not possible because the number of Ramond punctures has to be even:
	Let \(M\) be an irreducible super Riemann surface of genus \(p\) with Ramond punctures and \(j\colon \Red{M}\to M\) the canonical map from the reduced space.
	Then,
	\begin{equation}
		j^*\left(\faktor{\tangent{M}}{\cD}\right) = \tangent{\Red{M}}
	\end{equation}
	and hence for \(L=j^*\cD\) it holds
	\begin{equation}\label{eq:TwistedSpinorBundle}
		L\otimes L = \tangent{\Red{M}}(-\Red{R})
	\end{equation}
	and
	\begin{equation}
		2\deg L = 2 - 2p - \deg \Red{R},
	\end{equation}
	that is the number of points in \(\Red{R}\) is even.
	For a precise discussion of the moduli space of pairs \((\Red{M}, L)\) satisfying the condition~\eqref{eq:TwistedSpinorBundle}, see~\cite{AJ-MTSC}.

	The moduli space of supercurves of genus zero with only Ramond punctures has been studied in~\cite{OV-SMSGZSUSYCRP}.
\end{rem}

\begin{rem}
	There are several essentially combinatoric constructions that can be extended to the case of stable supercurves of genus zero:
	There are projections \(\pi_k\colon \overline{\mathcal{M}}_{0,k}\to \overline{\mathcal{M}}_{0,k-1}\) which delete the \(k\)-th marked point and collapse the non-stable vertices.
	The map \(\pi_k\) has canonical sections \(\sigma_j\) which double the \(j\)-th marked point.
	Furthermore, if we have a supercurve of genus zero modeled on a tree~\(T\) which is not stable, it can be collapsed to a stable supercurve of genus zero, modeled on a different tree with less vertices.
	For details in the classical case, we refer again to~\cite[Appendix~D]{McDS-JHCST}.
\end{rem}

\section{Super stable maps of genus zero}\label{Sec:SuperStableMapsOfGenusZero}
In this section we generalize the notion of stable maps of genus zero to super Riemann surfaces and study their moduli spaces.
Super stable maps are realized as maps from nodal supercurves to an almost complex manifold, that is a tree of super \(\targetACI\)-holomorphic curves of genus zero with conditions on the nodes.
For the non-super theory, we refer to~\cite[Chapter~5 and Chapter~6]{McDS-JHCST}.

In Section~\ref{SSec:SuperJHolomorphicCurvesOfGenusZero} we recall the notion of super \(\targetACI\)-holomorphic curve and their properties from~\cite{KSY-SJC}.
Super stable maps and their equivalence classes are then defined in Section~\ref{SSec:DefinitionOfSuperStableMapsOfGenusZero}.
In Section~\ref{SSec:SuperStableMapsOfFixedTreeType} we show that the moduli space of simple super stable maps of genus zero forms a global quotient superorbifold under certain conditions on the target.
In general, sequences of super stable maps of genus zero may converge to a limit of different tree type.
This supergeometric extension of bubbling is captured in the Gromov topology on super stable maps, defined in Section~\ref{SSec:GromovTopologyOnM0kABar}.

\subsection{Super \texorpdfstring{\(\targetACI\)}{J}-holomorphic curves of genus zero}\label{SSec:SuperJHolomorphicCurvesOfGenusZero}
We fix a classical symplectic manifold~\(N\) of dimension~\(2n\) with symplectic form~\(\omega\) and compatible almost complex structure~\(\targetACI\).
In~\cite{KSY-SJC}, we have introduced the concept of super \(\targetACI\)-holomorphic curves from an arbitrary super Riemann surface \(M\) to \(N\):
A map \(\Phi\colon M\to N\) of supermanifolds over the base \(C\) is called a super \(\targetACI\)-holomorphic curve if
\begin{equation}
	\DJBar\Phi = \frac12\left.\left(1 + \ACI\otimes\targetACI\right)\right|_\cD \differential{\Phi}
\end{equation}
vanishes.
Here \(\cD\subset\tangent{M}\) is the completely non-integrable distribution of complex rank~\(0|1\) defining the super Riemann surface and \(\ACI\) the almost complex structure on \(M\).

In this paper we focus on the case of genus zero, that is \(M=\ProjectiveSpace[\C]{1|1}\).
There exists a holomorphic embedding \(i\colon \ProjectiveSpace[\C]{1}\to\ProjectiveSpace[\C]{1|1}\) that reduces to the identity on the topological spaces.
With respect to this embedding the super Riemann surface structure on \(\ProjectiveSpace[\C]{1|1}\) is described by the round metric on \(\ProjectiveSpace[\C]{1}=S^2\), the unique spinor bundle \(S\) and vanishing gravitino \(\chi=0\).
Decomposing the map \(\Phi\) into component fields
\begin{align}
	\varphi &= \Phi\circ i\colon \ProjectiveSpace[\C]{1}\times C \to N &
	\psi &= \left.i^*\differential{\Phi}\right|_\cD \in \VSec{\dual{S}\otimes \varphi^*\tangent{N}} &
	F &= i^*\DLaplace \Phi \in\VSec{\varphi^*\tangent{N}}
\end{align}
the condition for \(\Phi\) to be a super \(\targetACI\)-holomorphic curve can be rewritten as
\begin{align}
	0 &= \DelJBar\varphi + \frac14\Tr_{\dual{g}_S}\left(\gamma\otimes\mathfrak{j}\targetACI\right)\psi, &
	0 &= F, \\
	0 &= \left(1+\ACI\otimes\targetACI\right)\psi &
	0 &= \Dirac\psi - \frac13SR^N(\psi),
\end{align}
see~\cite[Corollary~2.5.3]{KSY-SJC}
Here \(\mathfrak{j}\) is the derivative of \(\targetACI\) in the direction of \(\cD\) and \(SR^N(\psi)\) is a term obtained from the Riemannian curvature on \(N\) that is cubic in \(\psi\).
Both \(\mathfrak{j}\) and \(SR^N(\psi)\) vanish when \(N\) is Kähler.
In that case, \(\varphi\) is a \(\targetACI\)-holomorphic curve with a holomorphic twisted spinor \(\psi\) parametrized over \(C\).
In any case, the reduction \(\phi=\Red{\Phi}\colon \ProjectiveSpace[\C]{1}\to N\) is a \(\targetACI\)-holomorphic curve.

Under certain transversality conditions on a super \(\targetACI\)-holomorphic curve \(\Phi\colon \ProjectiveSpace[\C]{1|1}\to N\) one can use the implicit function theorem to construct a family of super \(\targetACI\)-holomorphic curves in the neighborhood of \(\Phi\) as a subsupermanifold of the space of all maps \(\ProjectiveSpace[\C]{1|1}\to N\).
Here the transversality condition requires that both
\begin{align}
	\begin{split}\label{eq:DefnDphi}
		D_{\phi}\colon \VSec{\phi^*\tangent{N}}&\to {\VSec{\cotangent{\ProjectiveSpace[\C]{1}}\otimes\phi^*\tangent{N}}}^{0,1} \\
		\xi&\mapsto\frac12\left(1+\ACI\otimes\targetACI\right)\left(\nabla\xi - \frac12\id_{\cotangent{\ProjectiveSpace[\C]{1}}}\otimes\left(\targetACI\left(\nabla_\xi \targetACI\right)\right)\differential\phi\right)
	\end{split} \\
	\begin{split}\label{eq:DefnDirac01}
		\Dirac^{0,1}\colon {\VSec{\dual{S}\otimes\phi^*\tangent{N}}}^{0,1} &\to \VSec{\dual{S}\otimes_\C\phi^*\tangent{N}} \\
		\zeta^{0,1} &\mapsto \frac12\left(1 - \ACI\otimes\targetACI\right)\Dirac\zeta^{0,1}
	\end{split}
\end{align}
are surjective.
Here \(\phi=\Red{\Phi}\) is the reduced map, \(\nabla\) the Levi-Civita connection on \(\ProjectiveSpace[\C]{1}\) and \(\Dirac\) the twisted Dirac operator.

For certain almost complex manifolds \(N\), for example \(N=\ProjectiveSpace[\C]{n}\), one can show that the transversality conditions hold for all super \(\targetACI\)-holomorphic curves.
In that case one can construct the moduli space \(\mathcal{M}_0(A)\) of super \(\targetACI\)-holomorphic curves representing the homology class \(A\in H_2(N, \Integers)\).
Its functor of points is given by
\begin{equation}\label{eq:DimensionOfModuliSpaceSuperJHolomorphicCurves}
	\begin{split}
		\underline{\mathcal{M}_0(A)}\colon \cat{SPoint}^{op} &\to \cat{Man} \\
		C &\mapsto \Set{\Phi\in\Hom_C(\ProjectiveSpace[\C]{1|1}, N)\given \DJBar\Phi = 0} \\
		\left(c: C'\to C\right) &\mapsto \left(\Phi\mapsto c^*\Phi\right)
	\end{split}
\end{equation}
and represents a supermanifolds of dimension
\begin{equation}
	2n + 2\left<c_1(TN),A\right>| 2\left<c_1(TN), A\right>.
\end{equation}
Alternatively, if not all maps to a certain target manifold are transversal, one can restrict to simple super \(\targetACI\)-holomorphic curves.
Here we call a super \(\targetACI\)-holomorphic curve simple if its reduction \(\phi\) is simple, that is, \(\phi\) cannot be decomposed into a \(\targetACI\)-holomorphic curve \(\tilde{\phi}\) and a branched holomorphic covering \(h\colon \ProjectiveSpace[\C]{1}\to \ProjectiveSpace[\C]{1}\) such that \(\phi=\tilde{\phi}\circ h\).
The Bochner method shows that \(\Dirac^{0,1}\) is surjective under certain conditions on the curvature.
It is then known that a generic perturbation of the almost complex structure assures surjectivity of \(D_\phi\) for all simple \(\targetACI\)-holomorphic curves.
Consequently, in those cases one can construct the moduli space \(\mathcal{M}_0^*(A)\) of simple super \(\targetACI\)-holomorphic curves as a supermanifold with the same dimension~\eqref{eq:DimensionOfModuliSpaceSuperJHolomorphicCurves}.

The super Lie group \(\SpGL_\C(2|1)\) of superconformal automorphisms of \(\ProjectiveSpace[\C]{1|1}\) acts on the moduli space of super \(\targetACI\)-holomorphic curves of genus zero:
For \(g\in\underline{\SpGL_\C(2|1)}(C)\) and \(\Phi\in\underline{\mathcal{M}_0(A)}(C)\) we define \(g\cdot \Phi=\Phi\circ g^{-1}\).
This action is proper and if \(A\neq 0\) the only fix-points are the \(\R^{0|0}\)-points which are invariant under the reflection in the odd directions~\(\Xi_-\).
Every constant map is invariant under every automorphisms of \(\ProjectiveSpace[\C]{1|1}\).

Super \(\targetACI\)-holomorphic curves over \(C\) are critical points of the superconformal action
\begin{equation}
	\begin{split}
		A(\Phi)
		&= \int_{\ProjectiveSpace[\C]{1|1}/B} {\norm{\left.\differential{\Phi}\right|_{\cD}}}^2[\d{vol}] \\
		&= \int_{\ProjectiveSpace[\C]{1}} \norm{\differential{\phi}}^2 + \left<\psi, \Dirac \psi\right> - \norm{F}^2 - \frac16\left<SR^N(\psi), \psi\right>\d{vol},
	\end{split}
\end{equation}
which takes values in \(\underline{\R}(C)={\left(\cO_C\right)}_0\).
The Berezin integral is independent of the chosen superconformal metric on \(\ProjectiveSpace[\C]{1|1}\), but the reduction of the Berezin integral to an integral over \(\ProjectiveSpace[\C]{1}\) depends on the embedding \(i\colon \ProjectiveSpace[\C]{1}\to\ProjectiveSpace[\C]{1|1}\) discussed above.
For more details on the superconformal action, we refer to~\cite[Chapter~12]{EK-SGSRSSCA}.
If \(\Phi\) is a super \(\targetACI\)-holomorphic curve over \(C=\R^{0|0}\), we have \(\psi=0\) and \(F=0\) and the superconformal action reduces to the harmonic action
\begin{equation}
	A(\Phi)
	= \int_{\ProjectiveSpace[\C]{1}} \norm{\differential{\phi}}^2 \d{vol}
= \left<[\omega], \phi_*[\ProjectiveSpace[\C]{1}]\right>,
\end{equation}
which equals to the pairing of the symplectic structure with the homology class of the image.

\subsection{Definition of super stable maps of genus zero}\label{SSec:DefinitionOfSuperStableMapsOfGenusZero}
In this section we define super stable maps and their equivalence classes.
Intuitively, a super stable map is a map from a nodal supercurve into the almost Kähler manifold~\(N\).
Recall from Definition~\ref{defn:NodalSuperCurve} that we represent a nodal supercurve of genus zero modeled on the \(k\)-labeled tree \(T=(T, E, p)\) by \(\bm{z}=\left(\Set{z_{\alpha\beta}}_{E_{\alpha\beta}}, \Set{z_i}_{1\leq i\leq k}\right)\) where \(z_{\alpha\beta}\) are the nodal points on the copy of \(\ProjectiveSpace[\C]{1|1}\) at \(\alpha\in T\) and \(z_i\) the marked points at \(\alpha=p(i)\).
The reduction of the special points at the node \(\alpha\), that is the union of nodes and marked points at \(\alpha\), is required to be distinct.
\begin{defn}\label{defn:SuperStableMap}
	A super stable map over \(B\) of genus zero into \(N\) modeled over the \(k\)-labeled tree \(T\) is a tuple
	\begin{equation}
		(\bm{z}, \bm{\Phi}) = \left(\left(\Set{z_{\alpha\beta}}_{E_{\alpha\beta}}, \Set{z_i}_{1\leq i\leq k}\right), \Set{\Phi_\alpha}_{\alpha\in T}\right)
	\end{equation}
	consisting of a nodal supercurve \(\bm{z}=\left(\Set{z_{\alpha\beta}}_{E_{\alpha\beta}}, \Set{z_i}_{1\leq i\leq k}\right)\) over \(B\) of genus zero modeled on \(T\) and super \(\targetACI\)-holomorphic curves \(\Phi_\alpha\colon \ProjectiveSpace[\C]{1|1}\times B\to N\) such that the following are satisfied:
	\begin{description}
		\item[(Nodes)]
			For all \(\alpha\), \(\beta\in T\) with edges \(E_{\alpha\beta}\) it holds \(\Phi_\alpha \circ z_{\alpha\beta} = \Phi_\beta \circ z_{\beta\alpha}\).
		\item[(Stability)]
			The number of special points is at least three for every vertex \(\alpha\) such that \(\Red{\left(\Phi_\alpha\right)}\) is constant.
	\end{description}
\end{defn}
We point out that the stability condition of a super stable map \((\bm{z},\bm{\Phi})\) does not imply that the nodal curve \(\bm{z}\) is stable because the stability condition only applies to vertices \(\alpha\in T\) where the map \(\Phi_\alpha\) is constant.
The definition of super stable maps over \(B=\R^{0|0}\) coincides with the usual definition of stable maps as given in~\cite[Definition~5.1.1]{McDS-JHCST}.
Furthermore Definition~\ref{defn:SuperStableMap} is functorial in \(B\).
That is, for a map \(C_B\colon C\to B\) and a super stable map \((\bm{z}, \bm{\Phi})\) over \(B\) there is a super stable map over \(C\) modeled over the same tree given by
\begin{equation}
	C_B^*(\bm{z}, \bm{\Phi})
	= \left(\left(\Set{z_{\alpha\beta}\circ C_B}, \Set{z_i\circ C_B}\right), \Set{\Phi_\alpha\circ\left(\id_{\ProjectiveSpace[\C]{1|1}}\times C_B\right)}\right).
\end{equation}

The following lemma gives an understanding of the condition at nodes in terms of component fields:
\begin{lemma}
	There is a bijection between the set of \(C\)-points of \(\ProjectiveSpace[\C]{1|1}\) and the set of tuples
	\begin{equation}
		\Set{(\Smooth{p}, s)\given \Smooth{p}\in \underline{\ProjectiveSpace[\C]{1}}(C)\text{, }s\in\VSec{\Smooth{p}^*S}}.
	\end{equation}

	Let \(\Phi\colon \ProjectiveSpace[\C]{1|1}\to N\) and be a super \(\targetACI\)-holomorphic curve with component fields \((\varphi, \psi, F=0)\).
	We introduce the shorthands \(\phi=\Red{\Phi}\), \(\varphi_{\Smooth{p}}=\varphi\circ\Smooth{p}\) and \(\psi_{\Smooth{p}}=\Smooth{p}^*\psi\).
	Then, for any coordinate system \(y^a\) around \(\phi(\Red{p})\) we have
	\begin{equation}
		{\left(\Phi\circ p\right)}^\# y^a = \varphi_{\Smooth{p}}^\#y^a + \left<s, \psi_{\Smooth{p}}^a\right> + \left<s, \psi_{\Smooth{p}}^b\right>\left<s, \psi_{\Smooth{p}}^c\right>\varphi^\#\Gamma_{bc}^a.
	\end{equation}
	Here, \(\psi^a\in\VSec{\Smooth{p}^*\dual{S}}\) are the coefficients of \(\psi_{\Smooth{p}}=\psi_{\Smooth{p}^a}\otimes\partial_{y^a}\) and the Christoffel symbols of \(N\) are denoted by \(\Gamma\), that is, \(\nabla^N_{\partial_{Y^b}}\partial_{Y^c} = \Gamma_{bc}^a\partial_{Y^a}\).
\end{lemma}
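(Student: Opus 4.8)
The plan is to handle the two claims in turn, the first being essentially structural and the second a computation in local coordinates.

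For the bijection I would use that $\ProjectiveSpace[\C]{1|1}$ is split: in the standard atlas the odd transition law $\theta_2 = \theta_1/z_1$ is the transition function of the spinor bundle $S$ on $\ProjectiveSpace[\C]{1}$, so $\ProjectiveSpace[\C]{1|1}$ is the total space of $\Pi S$, with a projection $\pi\colon\ProjectiveSpace[\C]{1|1}\to\ProjectiveSpace[\C]{1}$ satisfying $\pi\circ i = \id$, where $i$ is the zero section, which is exactly the canonical embedding used to define component fields. Given a $C$-point $p$, whose reduced point lies in a single chart, $p$ is described there by $p^\# z = z_p$ and $p^\#\theta = \theta_p$; I would set $\Smooth{p} = \pi\circ p$, so that $\Smooth{p}^\# z = z_p$, and observe that $\theta_p$ transforms under a change of chart exactly like a section of $S$, hence defines an odd section $s\in\VSec{\Smooth{p}^*S}$. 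Conversely a pair $(\Smooth{p}, s)$ reconstructs $p$ by $p^\# z = \Smooth{p}^\#z$, $p^\#\theta = s$, and the two assignments are manifestly mutually inverse; chart-independence of both directions is precisely the compatibility of the transition law of $\theta$ with that of $S$.

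For the formula I would work in a superconformal chart $(z,\theta)$ around $\Red{p}$ and coordinates $y^a$ on $N$ around $\phi(\Red{p})$. The even function $\Phi^\# y^a$ has a finite Taylor expansion in the odd coordinates, with zeroth order term $\varphi^\# y^a$ (since $i$ is the locus $\theta = 0$), a part linear in the odd coordinates, and a top (quadratic) part. The key input from~\cite{KSY-SJC} is that the component fields are not the bare coefficients of this expansion but the connection-corrected ones: the linear part encodes $\psi = i^*\differential{\Phi}|_{\cD}$, and the top coefficient, re-expressed so that $F = i^*\DLaplace\Phi$ becomes a genuine section of $\varphi^*\tangent{N}$, equals $F^a$ plus a term quadratic in $\psi$ with coefficient $\varphi^\#\Gamma_{bc}^a$ (the usual Christoffel correction making the auxiliary field tensorial). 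Since $\Phi$ is super $\targetACI$-holomorphic we have $F = 0$, so only the $\varphi^\#\Gamma_{bc}^a$-term survives at top order. It then remains to restrict along $p$, i.e. to apply $p^\#$, which substitutes $z\mapsto z_p = \Smooth{p}^\#z$ and the odd coordinate(s) by $s$; the zeroth order term becomes $\varphi_{\Smooth{p}}^\# y^a$, the linear term becomes $\langle s,\psi_{\Smooth{p}}^a\rangle$, and the surviving top term becomes $\langle s,\psi_{\Smooth{p}}^b\rangle\langle s,\psi_{\Smooth{p}}^c\rangle\,\varphi^\#\Gamma_{bc}^a$, which yields the asserted identity. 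One subtle point: although every odd element squares to zero, the section $s$ carries two real odd components whose product is a generically nonzero nilpotent, so $\langle s,\psi^b\rangle\langle s,\psi^c\rangle$ does not collapse; here one uses bilinearity of the pairing and the symmetry $\Gamma_{bc}^a = \Gamma_{cb}^a$ to match the two sides with the normalizations fixed in~\cite{KSY-SJC}.

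I expect the main obstacle to be the middle step: one has to extract from~\cite{KSY-SJC} the exact connection-corrected description of $\psi$ and $F$ — signs, factors, and which half of the spinor bundle $\psi$ sits in — and to check that this description is compatible with restriction along an arbitrary $C$-point, i.e. with the functoriality in the base that the paper has set up. The rest is bookkeeping with superconformal coordinates and odd variables.
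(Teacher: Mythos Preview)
Your proposal is correct and follows essentially the same route as the paper: identify the odd part of a $C$-point with a spinor at $\Smooth{p}$, then substitute into the known component-field expansion of $\Phi$ with $F=0$. The only notable difference is that the paper works in real Wess--Zumino coordinates $(x^a,\eta^\alpha)$ adapted to the embedding $i$ (so that $i^\#\eta^\alpha=0$ and the spinor frame is $s_\alpha=i^*\partial_{\eta^\alpha}$), and cites the expansion $\Phi^\#y^a=\varphi^\#y^a+\eta^\alpha\tensor[_\alpha]{\psi}{^a}+\eta^\mu\eta^\nu\tensor[_\mu]{\psi}{^b}\tensor[_\nu]{\psi}{^c}\Gamma_{bc}^a$ directly from~\cite[Chapter~12.3]{EK-SGSRSSCA} rather than reconstructing it from~\cite{KSY-SJC}; this makes your ``middle step'' a one-line citation rather than an extraction exercise, and the coordinate-independence of $(\Smooth{p},s)$ is argued via the invariance of the condition $i^\#\eta^\alpha=0$ rather than via the split structure and the projection $\pi$.
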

\begin{proof}
	Pick Wess--Zumino coordinates \((x^a, \eta^\alpha)\) for \(\ProjectiveSpace[\C]{1|1}\) around \(\Red{p}\) with respect to some superconformal metric on \(\ProjectiveSpace[\C]{1|1}\), see~\cite[Chapter~11]{EK-SGSRSSCA}.
	Recall that in particular Wess--Zumino coordinates satisfy \(i^\#x^a=x^a\) and \(i^\#\eta^\alpha=0\).
	In those coordinates the point \(p\) is given by
	\begin{align}
		p^\# x^a &= p^a, &
		p^\# \eta^\alpha &= p^\alpha,
	\end{align}
	for some even elements \(p^a\cO_C\) and odd elements \(p^\alpha\in\cO_C\).
	We define \(\Smooth{p}\) and \(s\) by
	\begin{align}
		\Smooth{p}^\#x^a &= p^a, &
		s = p^\alpha \left(\Smooth{p}^*s_\alpha\right),
	\end{align}
	where \(s_\alpha = i^*\partial_{\eta^\alpha}\) is the local spinor frame induced by the Wess--Zumino coordinates \((x^a, \eta^\alpha)\).
	The bijection between \(C\)-points of \(\ProjectiveSpace[\C]{1|1}\) and tuples of the form \((\Smooth{p}, s)\) is obvious.
	The definition of \(\Smooth{p}\) and \(s\) does not depend on the actual choice of Wess--Zumino coordinates because the condition \(i^\#\eta^\alpha=0\) is preserved under change of Wess--Zumino coordinates.

	The coordinate expansion of \(\Phi\) in Wess--Zumino coordinates is given by
	\begin{equation}
		\Phi^\#y^a = \varphi^\#y^a + \eta^\alpha \tensor[_\alpha]{\psi}{^a} + \eta^\mu\eta^\nu \tensor[_\mu]{\psi}{^b}\tensor[_\nu]{\psi}{^c}\Gamma_{bc}^a,
	\end{equation}
	see~\cite[Chapter~12.3]{EK-SGSRSSCA}.
	Recall that for super \(\targetACI\)-holomorphic curves the component field \(F\) vanishes.
	The result follows by taking pullback along \(\Smooth{p}\).
\end{proof}
Assume now that \(\alpha\) and \(\beta\in T\) are connected by an edge, and represent the nodal points \(z_{\alpha\beta}\) and \(z_{\beta\alpha}\) of a super stable map \((\bm{z},\bm{\Phi})\) by \((\Smooth{z}_{\alpha\beta}, s_{\alpha\beta})\) and \((\Smooth{z}_{\beta\alpha}, s_{\beta\alpha})\).
If we write \((\varphi_\alpha, \psi_\alpha, F_\alpha)\) and \((\varphi_\beta, \psi_\beta, F_\beta)\) for the component fields of \(\Phi_\alpha\) and \(\Phi_\beta\) the condition on the nodes of stable maps \(\Phi_\alpha\circ z_{\alpha\beta} = \Phi_\beta\circ z_{\beta\alpha}\) reads in a coordinate system \(y^a\) of \(N\) around \(\Red{\left(\Phi_\alpha\circ z_{\alpha\beta}\right)}\) as
\begin{equation}
	\begin{split}
		\MoveEqLeft
		{\left(\varphi_\alpha\right)}_{\Smooth{z}_{\alpha\beta}}^\#y^a + \left<s_{\alpha\beta}, {\left(\psi_\alpha\right)}_{\Smooth{z}_{\alpha\beta}}^a\right> + \left<s_{\alpha\beta}, {\left(\psi_\alpha\right)}_{\Smooth{z}_{\alpha\beta}}^b\right>\left<s_{\alpha\beta}, {\left(\psi_\alpha\right)}_{\Smooth{z}_{\alpha\beta}}^c\right>\varphi_\alpha^\#\Gamma_{bc}^a \\
		&= {\left(\varphi_\beta\right)}_{\Smooth{z}_{\beta\alpha}}^\#y^a + \left<s_{\beta\alpha}, {\left(\psi_\beta\right)}_{\Smooth{z}_{\beta\alpha}}^a\right> + \left<s_{\beta\alpha}, {\left(\psi_\beta\right)}_{\Smooth{z}_{\beta\alpha}}^b\right>\left<s_{\beta\alpha}, {\left(\psi_\beta\right)}_{\Smooth{z}_{\beta\alpha}}^c\right>\varphi_\beta^\#\Gamma_{bc}^a
	\end{split}
\end{equation}

We will now turn to the action of the reparametrization group \(G^T\), see Definition~\ref{defn:ReparametrizationGroupOfNodalCurves}, on super stable maps and their equivalence classes.
\begin{defn}\label{defn:EquivalenceOfSuperStableMaps}
An element \(\bm{g}=\Set{g_\alpha}_{\alpha\in T}\) of \(\underline{G^T}(C)\) acts on a super stable map \((\bm{z}, \bm{\Phi})=\left(\left(\Set{z_{\alpha\beta}}_{E_{\alpha\beta}}, \Set{z_i}_{1\leq i\leq k}\right), \Set{\Phi_\alpha}_{\alpha\in T}\right)\) over \(C\) and modeled on \(T\) by
	\begin{equation}
		\bm{g}\cdot\left(\bm{z}, \bm{\Phi}\right)
		= \left(\left(\Set{g_\alpha(z_{\alpha\beta})}, \Set{g_{p(i)}(z_i)}\right), \Set{\Phi_\alpha\circ g_\alpha^{-1}}\right).
	\end{equation}
\end{defn}
Like Definition~\ref{defn:SuperStableMap}, this definition is functorial in \(C\).
The stability condition in Definition~\ref{defn:SuperStableMap} implies that every super stable map has only finitely many automorphisms, that is \(\bm{g}\cdot\left(\bm{z}, \bm{\Phi}\right) = \left(\bm{z}, \bm{\Phi}\right)\).
More precisely, the only super stable maps that have non-trivial automorphisms are super stable maps over \(\R^{0|0}\) which have nodes with constant maps:
Reparametrizations acting by reflection of the odd directions on some nodes with constant maps and by identity on the others are non-trivial automorphisms.

We will say that two super stable maps of genus zero over \(C\) and modeled over the same tree type~\(T\) are equivalent if they differ by a reparametrization from \(\underline{G^T}(C)\).
We are interested in the spaces of equivalence classes of super stable maps fixing the homology class of the image.
A super stable map \((\bm{z}, \bm{\Phi})\) represents the homology class \(A\in H_2(N, \Integers)\) if its reduction represents the homology class \(A\) according to the classical definition.
That is, for \(\phi_\alpha=\Red{\left(\Phi_\alpha\right)}\colon\ProjectiveSpace[\C]{1}\to N\) we have
\begin{equation}
	A = \sum_{\alpha\in T} {\left(\phi_\alpha\right)}_*[\ProjectiveSpace[\C]{1}].
\end{equation}
In particular, we obtain a function
\begin{equation}
	\begin{split}
		T &\to H_2(N, \Integers) \\
		\alpha &\mapsto A_\alpha = {\left(\phi_\alpha\right)}_* [\ProjectiveSpace[\C]{1}]
	\end{split}
\end{equation}
such that
\begin{itemize}
	\item
		\(A=\sum_{\alpha\in T} A_\alpha\)
	\item
		if \(A_\alpha=0\) the number of special points on the node \(\alpha\) is at least three, that is \(\#Y_\alpha\geq 3\)
	\item
		every \(A_\alpha\) is spherical, that is, in the image of the Hurewicz map \(\pi_2(N)\to H_2(N, \Integers)\).
\end{itemize}
We use the shorthand \(\Set{A_\alpha}\) for such maps.
For fixed tree type \(T\) and \(\Set{A_\alpha}\) we denote the set of equivalence classes of super stable maps \((\bm{z}, \bm{\Phi})\) over \(C\), modeled over \(T\) such that \(\phi_\alpha[\ProjectiveSpace[\C]{1}]=A_\alpha\) by \(\underline{\mathcal{M}_{0,T}\left(\Set{A_\alpha}\right)}(C)\).
As both Definition~\ref{defn:SuperStableMap} and Definition~\ref{defn:EquivalenceOfSuperStableMaps} are functorial in~\(C\), we have a functor
\begin{equation}
	\underline{\mathcal{M}_{0, T}(\Set{A_\alpha})}\colon \cat{SPoint}^{op}\to \cat{Set},
\end{equation}
called the moduli space of super stable maps of fixed tree type \(T\) and homology classes~\(\Set{A_\alpha}\).
Taking the union over all \(\Set{A_\alpha}\) we define the moduli space \(\underline{\mathcal{M}_{0,T}(A)}\) of super stable maps of fixed tree type \(T\) and total homology class \(A\) by
\begin{equation}
	\underline{\mathcal{M}_{0, T}(A)}(C)
	=\bigcup_{\Set{\Set{A_\alpha}}} \underline{\mathcal{M}_{0,T}(\Set{A_\alpha})}(C).
\end{equation}
This is a finite union because the homology class \(A\) can only be decomposed in a finite number of possible \(\Set{A_\alpha}\) with corresponding \(\targetACI\)-holomorphic curves \(\phi_\alpha\colon \ProjectiveSpace[\C]{1}\to (N, \omega, \targetACI)\).
Every homologically non-trivial curve \(\phi_\alpha\) contributes a minimum amount \(\left<[\omega], A_\alpha\right>>\hbar(N, \omega, \targetACI)>0\) to the total action \(\left<[\omega], A\right>\), see~\cite[Proposition~4.1.5]{McDS-JHCST}.
The number of \(\alpha\in T\) such that \(A_\alpha=0\) is bounded by the number of stable nodes in the tree \(T\).
This also implies that there are no possible partitions~\(\Set{A_\alpha}\) if the tree~\(T\) has too many unstable nodes.

As a special case, let us look at the stable \(k\)-labeled tree that is given by a single vertex, no edges and \(k\geq 3\) marked points.
We denote the moduli space of super stable maps with this tree type and total homology class \(A\) by \(\mathcal{M}_{0,k}(A)\).
If the moduli space of super \(\targetACI\)-holomorphic curves \(\mathcal{M}_0(A)\) can be constructed, we obtain using Proposition~\ref{prop:ModuliSpaceOfSRS0k}
\begin{equation}
	\mathcal{M}_{0,k}(A) = \mathcal{M}_{0,k} \times \mathcal{M}_0(A).
\end{equation}
That is, the moduli space of stable maps with one bubble and \(k\)-markings can be realized as a superorbifold and its orbit functor \(\underline{\mathcal{M}_{0,k}(A)}\) takes values in topological spaces.
The space \(\underline{\mathcal{M}_{0,k}(A)}(\R^{0|0})\) is homeomorphic to the moduli space of classical stable maps.

In general, the spaces \(\mathcal{M}_{0,T}(\Set{A_\alpha})\) cannot be equipped with the structure of a superorbifold.
As in the theory of classical \(\targetACI\)-holomorphic curves the situation improves if we restrict to simple super stable maps.
Recall that a super \(\targetACI\)-holomorphic curve \(\Phi\colon \ProjectiveSpace[\C]{1|1}\to N\) is called simple if its reduction \(\phi=\Red{\Phi}\) is simple, that is, there is no \(\targetACI\)-holomorphic curve \(\tilde{\phi}\colon \ProjectiveSpace[\C]{1}\to N\) and a branched holomorphic covering \(h\colon \ProjectiveSpace[\C]{1}\to \ProjectiveSpace[\C]{1}\) such that \(\phi=\tilde{\phi}\circ h\).
\begin{defn}
	A super stable map \((\bm{z}, \bm{\Phi})\) is called simple if \((\Red{\bm{z}}, \Red{\bm{\Phi}})\) is simple, that is, every \(\Red{\left(\Phi_\alpha\right)}\) is a simple map and no two maps \(\Red{\left(\Phi_\alpha\right)}\) and \(\Red{\left(\Phi_\beta\right)}\) have the same image.

	We denote the subfunctor of simple super stable maps of genus zero and modeled over the tree \(T\) and fixed homology classes \(\Set{A_\alpha}\) by \(\underline{\mathcal{M}_{0,T}^*(\Set{A_\alpha})}\subset\underline{\mathcal{M}_{0,T}(\Set{A_\alpha})}\).
\end{defn}
We will see in Section~\ref{SSec:SuperStableMapsOfFixedTreeType}, that under certain conditions on \(N\) the functor \(\underline{\mathcal{M}^*_{0,T}(\Set{A_\alpha})}\) is the orbit functor of a superorbifold.

Furthermore, we are interested in the functor \(\underline{\overline{\mathcal{M}}_{0,k}}(A)\) which is given by the union over all isomorphism classes of \(k\)-labeled trees:
\begin{equation}
	\begin{split}
		\underline{\overline{\mathcal{M}}_{0,k}(A)}\colon \cat{SPoint}^{op}&\to \cat{Set} \\
		C &\mapsto \bigcup_{\text{tree types }T} \underline{\mathcal{M}_{0,T}(A)}(C)
	\end{split}
\end{equation}
This is a finite union, because there is only a finite number of trees for which there exist an admissible \(\Set{A_\alpha}\) and hence a non-empty moduli space.

In Section~\ref{SSec:GromovTopologyOnM0kABar}, we will refine this functor to take values in topological spaces such that
\begin{itemize}
	\item
		The space \(\underline{\overline{\mathcal{M}}_{0,k}(A)}(\R^{0|0})\) is homeomorphic to the space of classical stable maps.
		If the target \(N\) is compact, \(\underline{\overline{\mathcal{M}}_{0,k}(A)}(\R^{0|0})\) is the compactification of \(\underline{\mathcal{M}_{0,k}(A)}(\R^{0|0})\).
	\item
		When restricting to simple super stable maps of fixed tree type \(T\) and cohomology classes \(\Set{A_\alpha}\) the topology coincides with the one given by the orbit functor of the superorbifold \(\mathcal{M}^*_{0,T}(\Set{A_\alpha})\).
	\item
		Under certain conditions on the homology class \(A\) the space of non-simple stable maps \(\underline{\mathcal{M}_{0,T}(\Set{A_\alpha})}(\R^{0|0})\setminus\underline{\mathcal{M}_{0,T}^*(\Set{A_\alpha})}(\R^{0|0})\) has a high codimension.
		Consequently, also the subfunctor \(\underline{\mathcal{M}_{0,T}(\Set{A_\alpha})}\setminus\underline{\mathcal{M}_{0,T}^*(\Set{A_\alpha})}\) has a high even codimension if it can be shown to be a superorbifold.
\end{itemize}
With this justification, we regard \(\underline{\overline{\mathcal{M}}_{0,k}(A)}\) as the natural compactification of \(\underline{\mathcal{M}_{0,k}(A)}\).

\begin{rem}
	There is a functorial forgetful map \(\underline{\pi}\colon \underline{\overline{\mathcal{M}}_{0,k}(A)}\to \underline{\overline{\mathcal{M}}_{0,k}}\) which sends \(\left(\bm{z}, \bm{\Phi}\right)\) to the stabilization \(\bm{z}^s\) which arises after deletion of unstable vertices from the tree.
	This essentially combinatorial construction works as in the case of classical stable maps, see~\cite[Chapter~5.1]{McDS-JHCST}.
	Similarly, the definition of the projections \(\underline{\pi_k}\colon \underline{\overline{\mathcal{M}}_{0,k}(A)}\to \underline{\overline{\mathcal{M}}_{0,k-1}(A)}\) which deletes the \(k\)-th marked point and the evaluation maps \(\underline{\ev_i}\colon \underline{\overline{\mathcal{M}}_{0,k}(A)}\to \underline{N}\) which send \(\left(\bm{z}, \bm{\Phi}\right)\) to \(\Phi_{p(i)}\circ z_i\in \underline{N}(C)\) is straightforward.
\end{rem}

\subsection{Simple super stable maps of fixed tree type}\label{SSec:SuperStableMapsOfFixedTreeType}
In~\cite[Chapter~6]{McDS-JHCST} it is shown that \(\underline{\mathcal{M}_{0,T}^*(\Set{A_\alpha})}(\R^{0|0})\) is a manifold for generic almost complex structure \(\targetACI\) on \(N\).
Here, we will extend the argument to obtain that \(\underline{\mathcal{M}^*_{0,T}(\Set{A_\alpha})}\) is the orbit functor of a superorbifold under certain conditions on the target.

We denote the number of vertices of the \(k\)-labeled tree \(T\) by \(\#T\), the number of edges by \(\#E=\#T-1\).
By definition, a simple stable \(\targetACI\)-holomorphic curve over \(C\) is a tuple
\begin{equation}
	\left(\left(\Set{z_{\alpha\beta}}_{E_{\alpha\beta}}, \Set{z_i}_{1\leq i\leq k}\right), \Set{\Phi_\alpha}_{\alpha\in T}\right)
	\in {\left(\underline{\ProjectiveSpace[\C]{1|1}}(C)\right)}^{2\#E+k}\times \prod_{\alpha\in T}\underline{\mathcal{M}_0^*(A_\alpha)}(C)
\end{equation}
subject to the conditions~\ref{item:ZT}--\ref{item:evT} below.
For the moment, we assume that all the moduli spaces on the right hand side are supermanifolds.
The moduli space will later be obtained as a quotient of a subset \(Z^T\times M^T\) of an appropriate sub-supermanifold by the group acting as equivalence transformations.
\begin{enumerate}
	\item\label{item:ZT}
		For a fixed node \(\alpha\) the reduction of the special points needs to be distinct.
		We denote by \(Z^T\) the open subsupermanifold of \({\left(\ProjectiveSpace[\C]{1|1}\right)}^{2\#E+k}\) such that for each fixed node \(\alpha\) the reduction of the special points contained in \(Y_\alpha\) are pairwise distinct.
		Then \(Z^T\) is an open supermanifold of real dimension
		\begin{equation}
			4\#E + 2k| 4\#E + 2k.
		\end{equation}
	\item\label{item:MT}
		The reduced maps \(\Red{\left(\Phi_\alpha\right)}\) have to be pairwise different.
		We call the open sub supermanifold of \(\prod_{\alpha\in T}\mathcal{M}_0^*(A_\alpha)\) such that \(\Red{\left(\Phi_\alpha\right)}\) are pairwise different \(M^T\).
		Then \(M^T\) is an open supermanifold of real dimension
		\begin{equation}
			\begin{split}
				\MoveEqLeft
				\sum_{\alpha\in T} \left(2n + 2\left<A_\alpha, c_1(TN)\right>\right)| 2\sum_{\alpha\in T}\left<A_\alpha, c_1(TN)\right> \\
				&= 2n(\#E+1) + 2\left<A, c_1(TN)\right> | 2\left<A, c_1(TN)\right>.
			\end{split}
		\end{equation}
	\item\label{item:evT}
		At the nodes, super stable maps have to satisfy \(\Phi_\alpha\circ z_{\alpha\beta} = \Phi_\beta\circ z_{\beta\alpha}\).
		In order to understand the geometry of this condition we define the evaluation map
		\begin{equation}
			\begin{split}
				\underline{\ev^T}(C)\colon \underline{Z^T}(C)\times\underline{M^T}(C) &\to \underline{N^{2\#E}}(C) \\
				(\bm{z}, \bm{\Phi}) &\mapsto {\left(\Phi_\alpha\circ z_{\alpha\beta}\right)}_{E_{\alpha\beta}}
			\end{split}
		\end{equation}
		Denote by \(\Delta^T\subset N^{2\#E}\) the subset determined by the edges:
		\begin{equation}
			\Delta^T
			=\Set{\left(n_{\alpha\beta}\right)\in N^{2\#E}\given n_{\alpha\beta}=n_{\beta\alpha}}
		\end{equation}
		Note that \(\Delta^T\) is of codimension \(2n\#E|0\) in \(N^{2\#E}\).
		The preimage \({\left(\ev^T\right)}^{-1}\Delta^T\) is the set of simple super stable maps of genus zero and modeled over the tree \(T\).
		If \(\ev^T\) is transversal to \(\Delta^T\) the preimage \({\left(\ev^T\right)}^{-1}\Delta^T\) is a supermanifold of dimension
		\begin{equation}
			\begin{split}
				\MoveEqLeft
				\dim M^T + \dim Z^T - \codim \Delta^T \\
				&= 2n + 2\left<A, c_1(TN)\right> + 4\#E + 2k | 2\left<A, c_1(TN)\right> + 4\#E + 2k.
			\end{split}
		\end{equation}
		Notice that the map \(\ev^T\) is transversal to \(\Delta^T\) if its reduction \(\Red{\ev^T}\) is transversal to \(\Delta^T\) because \(N^{2\#E}\) is an even manifold.
\end{enumerate}
The group of equivalence transformations acts properly on \({\left(\ev^T\right)}^{-1}\Delta^T\).
The only fix-points of this action are the \(\R^{0|0}\)-points of \({\left(\ev^T\right)}^{-1}\Delta^T\) and the isotropy group is generated by the automorphisms \(\Xi_-^\alpha\) which multiply the odd direction by \(-1\) on the bubble \(\alpha\in T\).
The group of equivalence transformations \(G^T\) is a super Lie group of dimension \(6(\#E+1)|4(\#E+1)\).
By Theorem~\ref{thm:QuotientByFiniteIsotropyProperGroupAction} the quotient is a superorbifold of dimension
\begin{equation}
	\begin{split}
		\MoveEqLeft
		\dim M^T + \dim Z^T - \codim \Delta^T - \dim G^T\\
		&= 2n + 2\left<A, c_1(TN)\right> - 2\#E + 2k - 6 | 2\left<A, c_1(TN)\right>  + 2k - 4.
	\end{split}
\end{equation}
This proves the following theorem:
\ModuliSpaceOfSimpleStableMapsFixedTreeType{}

To understand the conditions of Theorem~\ref{thm:ModuliSpaceOfSimpleStableMapsFixedTreeType} better, recall that the moduli space \(\mathcal{M}_0^*(A_\alpha)\) has the structure of a supermanifold if the maps \(D_\phi\) and \(\Dirac^{0,1}\) given in Equation~\eqref{eq:DefnDphi} and Equation~\eqref{eq:DefnDirac01} are surjective for all \(\phi=\Phi\in\underline{\mathcal{M}^*_{0}(A_\alpha)}(\R^{0|0})\).
Surjectivity depends on the choice of the tame almost complex structure \(\targetACI\) on the symplectic manifold \((N, \omega)\).
By~\cite[Theorem~3.1.5]{McDS-JHCST}, the set of \(\targetACI\) such that \(D_\phi\) is surjective for all \(\Phi\in\underline{\mathcal{M}^*_0(A)}(\R^{0|0})\) is dense.
In~\cite{KSY-SJC}, we have shown surjectivity of \(\Dirac^{0,1}\) with the help of a Bochner method under certain curvature conditions such as Kähler manifolds with positive holomorphic sectional curvature.
This argument stays valid under small variations of \(\targetACI\).
For the transversality of \(\ev^T\) it suffices to check transversality in the reduced case which has been shown in~\cite[Theorem~6.2.6]{McDS-JHCST} for generic \(\targetACI\).
Consequently, starting with an almost Kähler manifold~\((N, \omega, \targetACI)\) which satisfies the curvature conditions such that \(\Dirac^{0,1}\) is surjective, one can perturb the almost complex structure \(\targetACI\) a finite number of times such that the resulting almost Kähler manifold satisfies the conditions of the Theorem~\ref{thm:ModuliSpaceOfSimpleStableMapsFixedTreeType}.
Moreover, as there is a finite number of subfunctors \(\underline{\mathcal{M}^*_{0,T}(\Set{A_\alpha})}\) in \(\underline{\overline{\mathcal{M}}_{0,k}(A)}\), we can perturb \(\targetACI\) further such that all the subfunctors \(\underline{\mathcal{M}^*_{0,T}(\Set{A_\alpha})}\) are superorbifolds.

\subsection{Gromov topology on \texorpdfstring{\(\underline{\overline{\mathcal{M}}_{0,k}(A)}\)}{M0kABar}}\label{SSec:GromovTopologyOnM0kABar}
In this section we refine the functor \(\underline{\overline{\mathcal{M}}_{0,k}(A)}\) to take values in topological spaces.
The strategy is to adapt the definition of Gromov convergence as discussed in~\cite[Chapter~5.6]{McDS-JHCST} to \(\underline{\overline{\mathcal{M}}_{0,k}(A)}(C)\) in a way that is functorial in \(C\).
While this functor is not expected to represent a superorbifold, its restriction to fixed tree type coincides with \(\underline{\mathcal{M}^*_{0,T}(A)}\) and its reduced points are compact.

In order to define the Gromov topology on \(\underline{\overline{\mathcal{M}}_{0,k}(A)}\), we need some more notation.
Let \(\left(\bm{z}, \bm{\Phi}\right) = \left(\left(\Set{z_{\alpha\beta}}, \Set{z_i}\right), \Set{\Phi_\alpha}\right)\) be a super stable map over \(B\) of genus zero and modeled on \(T\) and \(\mathbf{g}\in\underline{G^T}(B)\).
Recall that every map \(C_B\colon C\to B\) between superpoints gives rise to a super stable map \(C_B^*(\bm{z}, \bm{\Phi})\) over \(C\) and an automorphisms \(C_B^*\mathbf{g}\in\underline{G^T}(C)\).
This holds in particular for the initial object \(\R^{0|0}_B\colon \R^{0|0}\to B\) of \(\cat{SPoint}_B\) in which case \({\left(\R^{0|0}_B\right)}^*\left(\bm{z}, \bm{\Phi}\right)\) is the reduction, that is a classical stable map of genus zero modeled over the tree~\(T\).

We denote the set of \(C\)-points that have the same reduction as a node on the vertex \(\alpha\) by
\begin{equation}
	\underline{Z_\alpha}(C)
= \Set{p\in \underline{\ProjectiveSpace[\C]{1|1}}(C)\given {\left(\R^{0|0}_B\right)}^*p= {\left(\R^{0|0}_B\right)}^*z_{\alpha\beta}\text{ for some edge }\beta\text{ connected with }\alpha}
\end{equation}
Hence \(\ProjectiveSpace[\C]{1|1}\setminus Z_\alpha\) is an open subsupermanifold of \(\ProjectiveSpace[\C]{1|1}\).
This will allow us to formulate the convergence of super \(\targetACI\)-holomorphic curves to \(\Phi_\alpha\) away from nodes.

Another important requirement is that the harmonic action is preserved under Gromov convergence, both locally and globally.
In order to express this condition we define for any edge \(E_{\alpha\beta}\) of \(T\) the tree \(T_{\alpha\beta}\) to be the component of \(T\) that contains \(\beta\) after removal of the edge \(E_{\alpha\beta}\).
The superconformal action on \(T_{\alpha\beta}\) given by
\begin{equation}
	A_{\alpha\beta}(\bm{z}, \bm{\Phi})
	= \sum_{\gamma\in T_{\alpha\beta}} A(\Phi_\gamma)
	= \sum_{\gamma\in T_{\alpha\beta}} \int_{\ProjectiveSpace[\C]{1|1}/B} \norm{\left.\differential{\Phi_\alpha}\right|_\cD}^2 [\d{vol}]
\end{equation}
taking values in \(\underline{\R}(B)\).
For a node \(\alpha\in T\) and an open subset \(U\subset\underline{\ProjectiveSpace[\C]{1|1}}(\R^{0|0})\) we define the action contained in that open subset as
\begin{equation}
	A_\alpha((\bm{z}, \bm{\Phi}); U)
	= \int_{U} \norm{\left.\differential{\Phi_\alpha}\right|_\cD}^2 [\d{vol}] + \sum_{\beta\in T_U} A_{\alpha\beta}(\bm{z}, \bm{\Phi})
\end{equation}
where the sum runs over the set of nodes contained in \(U\),
\begin{equation}
	T_U = \Set{\beta\in T\given {\left(\R_B^{0|0}\right)}^*z_{\alpha\beta}\in U}.
\end{equation}
We will only consider the case of \(U=B_\epsilon(z_{\alpha\beta})\) which is the open ball of radius \(\epsilon\) around \({\left(\R^{0|0}_B\right)}^*z_{\alpha\beta}\) in the round metric of \(\ProjectiveSpace[\C]{1}=\underline{\ProjectiveSpace[\C]{1|1}}(\R^{0|0})\).

Informally, Gromov convergence of super stable maps of genus zero can now be formulated as convergence of the maps \(\Phi_\alpha\) away from nodal points up to automorphisms of \(\ProjectiveSpace[\C]{1|1}\), convergence of the superconformal action on all subtrees \(T_{\alpha\beta}\) and convergence of the nodal curve following Definition~\ref{defn:GromovConvergenceStableCurves}.
More precisely:
\begin{defn}\label{defn:GromovConvergenceStableMaps}
	A sequence \(\left(\bm{z}^\nu, \bm{\Phi}^\nu\right) = \left(\left(\Set{z_{\alpha\beta}^\nu}, \Set{z_i^\nu}\right), \Set{\Phi_\alpha^\nu}\right)\), \(\nu=1, 2, \dotsc\) of super stable maps over \(B\) of genus zero modeled on \(T^\nu\) is said to Gromov converge to to a super stable map \(\left(\bm{z}, \bm{\Phi}\right) = \left(\left(\Set{z_{\alpha\beta}}, \Set{z_i}\right), \Set{\Phi_\alpha}\right)\) over \(B\) of genus zero and modeled on \(T\) if for \(\nu\) sufficiently large there exists a tree homomorphism \(f^\nu\colon T\to T^\nu\) and a collection of reparametrizations \(\bm{g}^\nu = {\Set{g^\nu_\alpha}}_{\alpha\in T}\in \underline{G^T}(B)\) sucht that the following hold:
	\begin{description}
		\item[(Map)]
			For every \(\alpha\in T\) and every \(C_B\colon C\to B\) the sequence
			\begin{equation}
				\underline{\Phi^\nu_{f^\nu(\alpha)}\circ g_\alpha^\nu}(C_B)\colon \underline{\ProjectiveSpace[\C]{1|1}}(C)\to \underline{N}(C)
			\end{equation}
			converges to \(\underline{\Phi_\alpha}(C)\) uniformly on compact subsets on \(\underline{\ProjectiveSpace[\C]{1|1}}(C)\setminus\underline{Z_\alpha}(C)\).
		\item[(Action)]
			For any edge \(E_{\alpha\beta}\) of \(T\)
			\begin{equation}
				A_{\alpha\beta}\left(\bm{z},\bm{\Phi}\right)
				= \lim_{\epsilon\to 0}\lim_{\nu\to\infty} A_{f^\nu(\alpha)}\left(\left(\bm{z}^\nu, \bm{\Phi}^\nu\right); B_\epsilon(z_{\alpha\beta})\right)
				\in \underline{\R}(B).
			\end{equation}
		\item[(Rescaling)]
			If \(\alpha\), \(\beta\in T\) are connected by an edge \(E_{\alpha\beta}\) and \(\nu_j\) is a subsequence such that \(f^{\nu_j}(\alpha) = f^{\nu_j}(\beta)\) then the sequence
			\begin{equation}
				\underline{{\left(g_\alpha^{\nu_j}\right)}^{-1}\circ g_\beta^{\nu_j}}(C_B)\colon \underline{\ProjectiveSpace[\C]{1|1}}(C)\to \underline{\ProjectiveSpace[\C]{1|1}}(C)
			\end{equation}
			converges uniformly on compact subsets to \(C_B^*z_{\alpha\beta}\in\underline{\ProjectiveSpace[\C]{1|1}}(C)\) for all \(C_B\colon C\to B\) and \(j\to\infty\).
		\item[(Nodal points)]
			If \(\alpha\), \(\beta\in T\) are connected by an edge \(E_{\alpha\beta}\) and \(\nu_j\) is a subsequence such that \(f^{\nu_j}(\alpha)\neq f^{\nu_j}(\beta)\) then
			\begin{equation}
				z_{\alpha\beta} = \lim_{j\to\infty}{\left(g_\alpha^{\nu_j}\right)}^{-1} (z^{\nu_j}_{f^{\nu_j}(\alpha)f^{\nu_j}(\beta)})
				\in\underline{\ProjectiveSpace[\C]{1|1}}(B).
			\end{equation}
		\item[(Marked Points)]
			For all~\(1\leq i\leq k\) it holds \(p^\nu(i)=f^\nu(p(i))\) and
			\begin{equation}
				z_i = \lim_{\nu\to\infty}{(g_{p(i)}^\nu)}^{-1}(z_i^\nu)
				\in \underline{\ProjectiveSpace[\C]{1|1}}(B).
			\end{equation}
	\end{description}
\end{defn}
Gromov convergence is defined up to automorphisms and hence defines a topology on \(\underline{\overline{\mathcal{M}}_{0,k}(A)}(B)\) as in the case of Gromov convergence of stable curves, see~\ref{defn:GromovConvergenceStableCurves}.
By construction, this topology is functorial in \(B\) and has the following desired properties:
\begin{itemize}
	\item
		The topological space \(\underline{\overline{\mathcal{M}}_{0,k}(A)}(\R^{0|0})\) is homeomorphic to the classical moduli space of stable maps of genus zero.
		In particular, it is Hausdorff and compact if the target~\(N\) is compact.
	\item
		The restriction of the Gromov topology to simple super stable maps of genus zero and fixed tree type \(T\) yields the same topology as the orbit functor of the superorbifold \(\mathcal{M}_{0,T}^*(A)\).
\end{itemize}
We will leave the questions of uniqueness of limits and construction of suitable integrals over \(\underline{\overline{\mathcal{M}}_{0,k}(A)}\) to define invariants for further work.
\begin{rem}
	Let \(f^\nu\colon M\to M'\) be a sequence of maps between supermanifolds over \(B\) and \(f\colon M\to M'\) a map of supermanifolds over \(B\).
	The condition that
	\begin{equation}
		\underline{f^\nu}(C_B)\colon \underline{M}(C_B)\to \underline{M'}(C_B)
	\end{equation}
	converges uniformly on compact subsets to \(\underline{f}(C_B)\) for all \(C_B\colon C\to B\) to \(\underline{f}(C_B)\) is equivalent to the uniform convergence on compact subsets of all coefficients of all coordinate expressions of the maps \(f^\nu\) to the corresponding coefficients of \(f\).
\end{rem}
\begin{rem}
	The Definition~\ref{defn:GromovConvergenceStableMaps} can easily be generalized so that every \(\left(\bm{z}^\nu, \bm{\Phi}^\nu\right)\) is a super stable map of genus zero in the almost Kähler manifold \((N, \omega, \targetACI^\nu)\) where \(\targetACI^\nu\) is a sequence of \(\omega\)-tame almost complex structures converging to an \(\omega\)-tame almost complex structure \(\targetACI\).
	This allows for additional flexibility as in~\cite[Chapter~5]{McDS-JHCST}.
\end{rem}

\printbibliography

\textsc{Enno Keßler\\
Center of Mathematical Sciences and Applications,
Harvard University,
20~Garden Street,
Cambridge, MA 02138,
USA}\\
\texttt{ek@cmsa.fas.harvard.edu} \\
\newpage
\textsc{Artan Sheshmani\\
Center of Mathematical Sciences and Applications,
Harvard University,
20~Garden Street,
Cambridge, MA 02138,
USA\\[.5em]
Institut for Matematik,
Aarhus Universitet,
Ny Munkegade 118,
8000, Aarhus C,
Denmark\\[.5em]
National Research University
Higher School of Economics, Russian Federation,
Laboratory of Mirror Symmetry,
NRU HSE,
6 Usacheva Street,
Moscow, Russia, 119048}\\
\texttt{artan@cmsa.fas.harvard.edu} \\

\textsc{Shing-Tung Yau\\
Center of Mathematical Sciences and Applications,
Harvard University,
20~Garden Street,
Cambridge, MA 02138,
USA\\[.5em]
Department of Mathematics,
Harvard University,
Cambridge, MA 02138,
USA
}\\
\texttt{yau@math.harvard.edu}
\end{document}